\newcommand{\EE}{\mathbb{E}}
\newcommand{\CC}{\mathbb{C}}
\newcommand{\RR}{\mathbb{R}}
\newcommand{\cI}{\mathcal{I}}
\newcommand{\cO}{\mathcal{O}}
\newcommand{\bI}{\mathbf{I}}
\newcommand{\bX}{\mathbf{X}}
\newcommand{\bS}{\mathbf{S}}
\newcommand{\bA}{\mathbf{A}}
\newcommand{\bB}{\mathbf{B}}
\newcommand{\bC}{\mathbf{C}}
\newcommand{\br}{\vect{r}}
\newcommand{\bu}{\vect{u}}  
\newcommand{\bv}{\vect{v}}  
\newcommand{\bx}{\vect{x}}  
\newcommand{\by}{\vect{y}}
\newcommand{\bbeta}{\grvect{\beta}}
\newcommand{\bdelta}{\grvect{\delta}}
\newcommand{\bSigma}{\grvect{\Sigma}}
\newcommand{\bPsi}{\grvect{\Psi}}
\newcommand{\hbeta}{{\widehat{\beta}}}
\newcommand{\hR}{{\widehat{R}}}
\newcommand{\hT}{{\widehat{T}}}
\newcommand{\hP}{{\widehat{P}}}
\newcommand{\tv}{\widetilde{v}}
\newcommand{\tr}{\mathop{\mathrm{tr}}}
\newcommand{\asto}{\xrightarrow{\text{a.s.}}}
\newcommand{\pto}{\xrightarrow{\text{p}}}
\newcommand{\dto}{\xrightarrow{\text{d}}}
\def\1{\mathbbm{1}}
\newcommand{\asympequi}{\simeq}
\newcommand\ddfrac[2]{\frac{\displaystyle #1}{\displaystyle #2}}
\newcommand{\lambdaminnz}{\lambda_{\min}^{+}}
\newcommand{\ridge}{\mathrm{ridge}}
\newcommand{\ens}{\mathrm{ens}}
\newcommand{\vhbeta}{\widehat{\vbeta}}
\newcommand{\bhbeta}{\widehat{\vbeta}}
\newcommand{\bhSigma}{{\widehat{{\bSigma}}}}
\newcommand{\vtbeta}{\widetilde{\vbeta}}
\renewcommand{\tilde}{\widetilde}
\newcommand\norm[2][{}]{\ensuremath{{\left\|#2\right\|}_{#1}}}
\newcommand\smallnorm[2][{}]{\ensuremath{{\|#2\|}_{#1}}}
\newcommand\inv[2][1]{\left( #2 \right)^{-#1}}
\newcommand\biginv[2][1]{\bigparen{#2}^{-#1}}
\newcommand{\vect}[1]{\mathbf{#1}}
\newcommand{\va}{\vect{a}}
\newcommand{\vs}{\vect{s}}  
\newcommand{\vu}{\vect{u}}  
\newcommand{\vv}{\vect{v}}  
\newcommand{\vx}{\vect{x}}  
\newcommand{\vy}{\vect{y}}  
\newcommand{\vz}{\vect{z}}
\newcommand{\mA}{\vect{A}}  
\newcommand{\mB}{\vect{B}} 
\newcommand{\mD}{\vect{D}}
\newcommand{\mG}{\vect{G}}
\newcommand{\mI}{\vect{I}}
\newcommand{\mL}{\vect{L}}
\newcommand{\mR}{\vect{R}}
\newcommand{\mS}{\vect{S}}
\newcommand{\mT}{\vect{T}}
\newcommand{\mU}{\vect{U}}
\newcommand{\mV}{\vect{V}}
\newcommand{\mX}{\vect{X}}
\newcommand{\grvect}[1]{{\bm{#1}}}
\newcommand{\vbeta}{\grvect{\beta}}
\newcommand{\mTheta}{\grvect{\Theta}}
\newcommand{\mSigma}{\grvect{\Sigma}}
\newcommand{\mPsi}{\grvect{\Psi}}
\newcommand{\complexset}{\mathbb C}
\newcommand{\reals}{\mathbb R}
\DeclareMathOperator*{\argmin}{arg\,min}
\newcommand{\diag}[1]{\mathrm{diag}\left(#1\right)}
\newcommand{\set}[1]{\left\{#1\right\}}
\newcommand{\paren}[1]{\left( #1 \right)}
\newcommand{\bigparen}[1]{\big( #1 \big)}
\newcommand{\bracket}[1]{\left[ #1 \right]}
\newcommand{\bigbracket}[1]{\big[ #1 \big]}
\def\vzero{{\bf 0}}
\def\ind{{\mathds 1}}
\def\normal{{\mathcal N}}
\newcommand\transp{\top}
\newcommand\ctransp{\transp}
\def\tr{{\rm tr}}
\newcommand{\defeq}{=}
\newcommand{\wtwoconv}{\overset{2}{\Rightarrow}}
\newcommand{\poly}{\mathrm{poly}}
\def\setA{\mathcal A}
\def\Sxf{\mathscr S}
\def\mhSigma{\widehat{\mSigma}}
\newcommand{\papertitle}{Asymptotically free sketched ridge ensembles:\\ Risks, cross-validation, and tuning}
\newcommand{\removelinebreaks}[1]{%
      \def\\{\relax}#1}
\title{\papertitle}
\newcommand{\footremember}[2]{%
    \footnote{#2}
    \newcounter{#1}
    \setcounter{#1}{\value{footnote}}%
}
\author{
    Pratik Patil\footremember{berkeleystats}{Department of Statistics, University of California, Berkeley, CA 94720, USA.} \\ {\small \href{mailto:pratikpatil@berkeley.edu}{pratikpatil@berkeley.edu}} 
    \and
    Daniel LeJeune\footremember{stanfordstats}{Department of Statistics, Stanford University, Stanford, CA 94305, USA.} \\ {\small \href{mailto:daniel@dlej.net}{daniel@dlej.net}}
}
\begin{document}

\maketitle

\begin{abstract}
We employ random matrix theory to establish consistency of generalized cross validation (GCV) for estimating prediction risks of sketched ridge regression ensembles, enabling efficient and consistent tuning of regularization and sketching parameters. Our results hold for a broad class of asymptotically free sketches under very mild data assumptions. For squared prediction risk, we provide a decomposition into an unsketched equivalent implicit ridge bias and a sketching-based variance, and prove that the risk can be globally optimized by only tuning sketch size in infinite ensembles. For general subquadratic prediction risk functionals, we extend GCV to construct consistent risk estimators, and thereby obtain distributional convergence of the GCV-corrected predictions in Wasserstein-2 metric. This in particular allows construction of prediction intervals with asymptotically correct coverage conditional on the training data. We also propose an ``ensemble trick'' whereby the risk for unsketched ridge regression can be efficiently estimated via GCV using small sketched ridge ensembles. We empirically validate our theoretical results using both synthetic and real large-scale datasets with practical sketches including CountSketch and subsampled randomized discrete cosine transforms.
\end{abstract}

\section{Introduction}
\label{sec:introduction}

\emph{Random sketching} is a powerful tool for reducing the computational complexity associated with large-scale datasets by projecting them to a lower-dimensional space for efficient computations. 
Sketching has been a remarkable success both in practical applications and from a theoretical standpoint: it has enabled application of statistical techniques to problem scales that were formerly unimaginable \citep{pmlr-v80-aghazadeh18a,murray2023randomized}, while enjoying rigorous technical guarantees that ensure the underlying learning problem essentially remains unchanged provided the sketch dimension is not too small (e.g., above the rank of the full data matrix)~\citep{tropp2011hadamard,woodruff2014sketching}.

However, real-world data scenarios often deviate from these ideal conditions for which the problem remains unchanged.
For one, real data often has a tail of non-vanishing eigenvalues and is not truly low rank. 
For another, our available resources may impose constraints on sketch sizes, forcing them to fall below the critical threshold.
When the sketch size is critically low, the learning problem can change significantly. 
In particular, when reducing the dimensionality below the threshold to solve the original problem, the problem becomes \emph{implicitly regularized}~\citep{mahoney2011randomized,thanei_heinze_meinshausen_2017}. 
Recent work has precisely characterized this problem change in linear regression~\citep{lejeune2022asymptotics}, being exactly equal to ridge regression in an infinite ensemble of sketched predictors~\citep{lejeune2020implicit}, with the size of the sketch acting as an additional hyperparameter that affects the implicit regularization.

If the underlying problem changes with sketching, a key question arises: 
\emph{can we reliably and efficiently tune hyperparameters of sketched prediction models, such as the sketch size?}
While cross-validation (CV) is the classical way to tune hyperparameters, standard $k$-fold CV (with small or moderate $k$ values, such as $5$ or $10$) is not statistically consistent for high-dimensional data \citep{xu_maleki_rad_2019}, and leave-one-out CV (LOOCV) is often computationally infeasible. 
Generalized cross-validation (GCV), on the other hand, is an extremely efficient method for estimating generalization error using only training data~\citep{craven_wahba_1979,friedman_hastie_tibshirani_2009}, providing asymptotically exact error estimators in high dimensions with similar computational cost to fitting the model~\citep{patil2021uniform,wei_hu_steinhardt}. 
However, since the consistency of GCV is due to certain concentration of measure phenomena of data, it is unclear whether GCV should also provide a consistent error estimator for predictors with sketched data, in particular when combining several sketched predictors in an ensemble, such as in distributed optimization settings.

In this work, we prove that efficient consistent tuning of hyperparameters of sketched ridge regression ensembles is achievable with GCV (see Figure~\ref{fig:gcv_paths} for an illustration). 
Furthermore, we state our results for a very broad class of \emph{asymptotically free} sketching matrices, a notion from free probability theory~\citep{voiculescu1997free,mingo2017free} generalizing rotational invariance.

\subsection{Summary of results and outline}
Below we present a summary of our main results in this paper and provide an outline of the paper.

\begin{figure}[t]
    \centering
    \includegraphics[width=0.99\textwidth]{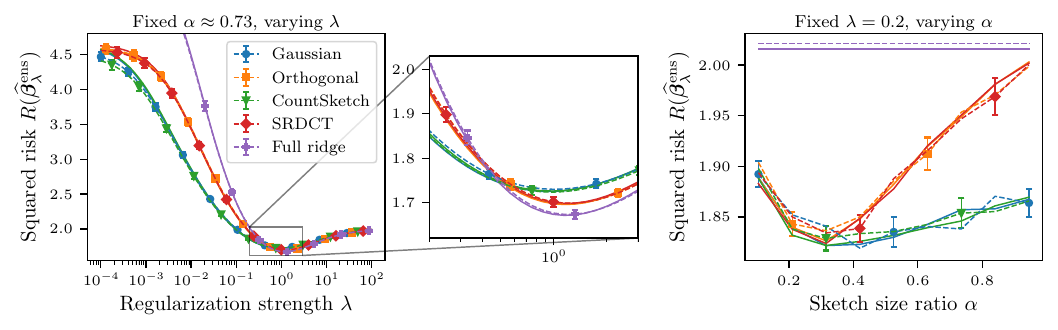}
    \caption{
    \textbf{GCV provides consistent risk estimation for sketched ridge regression.}
    We show squared risk (solid) and GCV estimates (dashed) for sketched regression ensembles of $K = 5$ predictors on synthetic data with $n = 500$ observations and $p = 600$ features. 
    \textbf{Left:} Each sketch induces its own risk curve in regularization strength $\lambda$, but across all sketches GCV is consistent. 
    \textbf{Middle:} Minimizers and minimum values can vary by sketching type.
    \textbf{Right:} Each sketch also induces a risk curve in sketch size $\alpha = q/p$, so sketch size can be tuned to optimize risk. Error bars denote standard error of the mean over 100 trials.
    Here, SRDCT refers to a subsampled randomized discrete cosine transform (see \Cref{sec:experimental-details} for further details).
    }
    \label{fig:gcv_paths}
\end{figure}

\begin{itemize}[leftmargin=7mm]
    \item[(1)] 
    \textbf{Squared risk asymptotics.} 
    We provide precise asymptotics of squared risk and its GCV estimator for sketched ridge ensembles in \Cref{thm:risk-gcv-asymptotics} for the class of asymptotically free sketches applied to features.
    We give this result in terms of an exact bias--variance decomposition into an equivalent implicit unsketched ridge regression risk and an inflation term due to randomness of the sketch that is controlled by ensemble size.
    
    \item[(2)] 
    \textbf{Distributional and functional consistencies.} 
    We prove consistency of GCV risk estimators for a broad class of subquadratic risk functionals in \Cref{thm:gcv-consistency,thm:functional_consistency}.
    To the best of our knowledge, this is the first extension of GCV beyond residual-based risk functionals in any setting.
    In doing so, we also prove the consistency of estimating the joint response--prediction distribution using GCV in Wasserstein $W_2$ metric in \Cref{cor:distributional-consistency}, enabling the use of GCV for also evaluating classification error and constructing prediction~intervals with valid asymptotic conditional coverage.

    \item[(3)] 
    \textbf{Tuning applications.}
    Exploiting the special form of the risk decomposition, we propose a method in the form of an ``ensemble trick'' to tune unsketched ridge regression using only sketched ensembles.
    We also prove that large unregularized sketched ensembles with tuned sketch size can achieve the optimal unsketched ridge regression risk in \Cref{cor:ridge-equivalence}.
\end{itemize}

Throughout all of our results, we impose very weak assumptions: we require no model on the relationship between response variables and features; we allow for arbitrary feature covariance with random matrix structure; we allow any sketch that satisfies asymptotic freeness, which we empirically verify for CountSketch \citep{charikar2002frequent} and subsampled randomized discrete cosine transforms (SRDCT); and we allow for the consideration of zero or even negative regularization.
All proofs and details of experiments and additional numerical illustrations are deferred to the appendices, which also contain relevant backgrounds on asymptotic freeness and asymptotic equivalents. 
The source code for generating all of our experimental figures in this paper is available at \href{https://github.com/dlej/sketched-ridge}{https://github.com/dlej/sketched-ridge}.

\subsection{Related work}

For context, we briefly discuss related work on sketching, ridge regression, and cross-validation.

\emph{Sketching and implicit regularization.}
The implicit regularization effect of sketching has been known for some time~\citep{mahoney2011randomized,thanei_heinze_meinshausen_2017}. 
This effect is strongly related to \emph{inversion bias}, and has been precisely characterized in a number of settings in recent years~\citep{pmlr-v108-mutny20a,derezinski2021newtonless,pmlr-v134-derezinski21a}. 
Most recently, \cite{lejeune2022asymptotics} showed that sketched matrix inversions are asymptotically equivalent to unsketched implicitly regularized inversions.
Notably, this holds not only for i.i.d.\ random sketches but also for asymptotically free sketches.
This result is a crucial component of our bias--variance decomposition of GCV risk.
By accommodating free sketches, we can apply our results to many sketches used in practice with limited prior theoretical understanding.
We offer further comments and comparisons in \Cref{sec:asymptotically-free-sketching}.

\emph{High-dimensional ridge and sketching.}
Ridge regression, particularly its ``ridgeless'' variant where the regularization parameter approaches zero, has attracted significant attention in the last few years.
This growing interest stems the phenomenon that in the overparameterized regime, where the number of features exceeds than the number of observations, the ridgeless estimator interpolates the training data and exhibits a peculiar generalization behaviour \citep{belkin_hsu_xu_2020,bartlett_long_lugosi_tsigler_2020,hastie2022surprises}.
Different sketching variants and their risks for a single sketched ridge estimator under positive regularization are analyzed in \cite{liu_dobriban_2019}.
Very recently, \cite{bach2023high} considers the effect random sketching that includes ridgeless regression.
Our work broadens the scope of these prior works by considering all asymptotically free sketched ensembles and accommodating zero and negative regularization.
Complementary to feature sketching, there is an emerging interest in investigating subsampling, and more broadly observation sketching.
The statistical properties of subsampled ridge predictors are recently analyzed in several works under different data settings: \cite{lejeune2020implicit,patil2022bagging,du2023subsample,patil2023generalized,chen2023sketched,ando2023high}.
At a high level, this work can be can informally thought of ``dual'' to this evolving literature.
While there are definite parallels between the two, there are some crucial differences as well.
We discuss more on this aspect in \Cref{sec:discussion}.

\emph{Cross-validation and tuning.}
CV is a prevalent method for model assessment and selection \citep{gyorfi_kohler_krzyzak_walk_2006,friedman_hastie_tibshirani_2009}.
For comprehensive surveys on various CV variants, we refer readers to \citet{arlot_celisse_2010,zhang_yang_2015}.
Initially proposed for linear smoothers in the fixed-X design settings, GCV provides an extremely efficient alternative to traditional CV methods like LOOCV \citep{golub_heath_wabha_1979,craven_wahba_1979}. It approximates the so-called ``shortcut'' LOOCV formula \citep{friedman_hastie_tibshirani_2009}.
More recently, there has been growing interest in GCV in the random-X design settings.
Consistency properties of GCV have been investigated: for ridge regression under various scenarios \citep{adlam_pennington_2020neural,patil2021uniform,patil2022estimating,wei_hu_steinhardt,han2023distribution}, for LASSO \citep{bayati2011amp,celentano2020lasso}, and for general regularized $M$-estimators \citep{bellec2020out,bellec2022derivatives}, among others.
Our work adds to this body of work by analyzing GCV for freely sketched ridge ensembles and establishing its consistency across a broad class of risk functionals.

\section{Sketched ensembles}
\label{sec:preliminaries}

Let $\smash{((\vx_i, y_i))_{i=1}^{n}}$ be $n$ i.i.d.\ observations in $\reals^{p} \times \reals$. 
We denote by $\mX \in \reals^{n \times p}$ the data matrix whose $i$-th row contains $\smash{\vx_i^\transp}$ and by $\vy \in \reals^n$ the associated response vector whose $i$-th entry contains $y_i$.

\paragraph{Sketched ensembles and risk functionals.\hspace{-0.5em}}
Consider a collection of $K$ independent sketching matrices $\mS_k \in \reals^{p \times q}$ for $k \in [K]$.
We consider sketched ridge regression where we apply the sketching matrix $\mS_k$ to the features (columns) of the data $\mX$ only. 
We denote the sketching solution as
\begin{equation}
    \label{eq:beta-hat}
    \widehat{\vbeta}_\lambda^k
    \defeq \mS_k \widehat{\vbeta}^{\mS_k}_\lambda
    \quad 
    \text{for}
    \quad
    \widehat{\vbeta}^{\mS_k}_\lambda
    \defeq \argmin_{\vbeta \in \reals^{q}} \tfrac{1}{n} \big\Vert \vy - \mX \mS_k \vbeta \big\Vert_2^2 + \lambda \norm[2]{\vbeta}^2,
\end{equation}
where $\lambda$ is the ridge regularization level.
The estimator $\widehat{\bbeta}_\lambda^k$ admits a closed form expression shown below in \eqref{eq:ensemble-estimator}.
Note that we express the solution $\widehat{\vbeta}_\lambda^k$ in the feature space as the sketching matrix $\mS_k$ times a solution $\widehat{\vbeta}^{\mS_k}_\lambda$ in the sketched data space. 
When we use this solution on a new data point $\vx_0$, the predicted response is given by 
$\vx_0^\transp \widehat{\vbeta}_\lambda^k 
= \vx_0^\transp \mS_k \widehat{\vbeta}^{\mS_k}_\lambda$. 
This is simply the application of $\widehat{\vbeta}^{\mS_k}_\lambda$ to the sketched data point $\mS_k^\transp \vx$. 
The primary advantage of representing $\widehat{\vbeta}_\lambda^k$ in the feature space $\reals^p$, rather than in the sketched data space $\reals^q$, is that we can now perform a direct comparison with other estimators within the feature space $\reals^p$.
We obtain the final ensemble estimator as a simple unweighted average of $K$ independently sketched predictors, each of which admits a simple expression in terms of a regularized pseudoinverse of the sketched data:
\begin{equation}
    \label{eq:ensemble-estimator}
    \widehat{\vbeta}^{\ens}_\lambda
    = \frac{1}{K} \sum_{k=1}^K \widehat{\vbeta}^{k}_\lambda,
    \quad
    \text{where}
    \quad
    \widehat{\vbeta}_\lambda^k = 
    \tfrac{1}{n} \mS_k \big( \tfrac{1}{n} \mS_k^\transp \mX^\transp \mX \mS_k + \lambda \mI_q \big)^{-1} \mS_k^\transp \mX^\transp \vy.
\end{equation}
It is worth mentioning that, in practice, it is not necessary to ``broadcast'' $\widehat{\vbeta}_\lambda^{\mS_k}$ back to $p$-dimensional space to realize \smash{$\widehat{\vbeta}_\lambda^k$}, and all computation can (and should) be done in the sketched domain.
Note also that we allow for $\lambda$ to be possibly negative in when writing \eqref{eq:ensemble-estimator} (see \Cref{thm:sketched-pseudoinverse} for details).
Let $(\vx_0, y_0)$ be a test point drawn independently from the same distribution as the training data.
Risk functionals of the ensemble estimator are properties of the joint distribution of $\smash{(y_0, \vx_0^\transp \vhbeta_\lambda^\ens)}$. 
Letting $\smash{P_\lambda^\ens}$ denote this distribution, we are interested in estimating linear functionals of $\smash{P_\lambda^\ens}$.
That is, let $t : \RR^2 \to \RR$ be an error function.
Define the corresponding conditional prediction risk functional as
\begin{equation}
    \label{eq:func_def}
    T(\vhbeta_\lambda^\ens)
    = \int t(y, z) \, \mathrm{d}P^\ens_\lambda(y, z)
    = \EE_{\vx_0, y_0}
    \Big[
        t(y_0, \vx_0^\top \hbeta_\lambda^\ens)
        \bigm|
        \mX, \vy, ( \mS_k)_{k=1}^K
    \Big].
\end{equation}
A special case of a risk functional is the squared risk when $\smash{t(y, z) = (y - z)^2}$.
We denote the risk functional in this case by $R(\vhbeta_\lambda^\ens)$, which is the classical mean squared prediction risk.

\paragraph{Proposed GCV plug-in estimators.\hspace{-0.5em}}
Note that each individual estimator $\smash{\vhbeta_\lambda^k}$ of the ensemble is a linear smoother with smoothing matrix 
\begin{align}
\mL_\lambda^k = \tfrac{1}{n} \mX \mS_k 
(\tfrac{1}{n} \mS_k^\transp \mX^\transp \mX \mS_k + \lambda \mI_q)^{-1} \mS_k^\transp \mX^\transp,
\end{align}
in the sense that the training data predictions are given by $\smash{\mX \vhbeta_\lambda^k = \mL_\lambda^k \vy}$. 
This motivates our consideration of estimators based on generalized cross-validation (GCV)~\citep[Chapter 7]{friedman_hastie_tibshirani_2009}.
Given any linear smoother of the responses with smoothing matrix $\mL$, the GCV estimator of the squared prediction risk is 
$\smash{{\tfrac{1}{n} \| \vy - \mL \vy \|_2^2}/{(1 - \tfrac{1}{n} \tr(\mL))^2}}$.
GCV enjoys certain consistency properties in the fixed-$\mX$ setting \citep{li_1985,li_1986} and has recently been shown to also be consistent under various random-$\mX$ settings for ridge regression~\citep{patil2021uniform,wei_hu_steinhardt,han2023distribution}.

We extend the GCV estimator to general functionals by considering GCV as a plug-in estimator of squared risk of the form $\smash{\tfrac{1}{n} \sum_{i=1}^n (y_i - z_i)^2}$. 
Determining the $z_i$ that correspond to GCV, we obtain the empirical distribution of GCV-corrected predictions as follows:
\begin{equation}
    \label{eq:gcv-dual-empirical-dist}
    \hP^\ens_\lambda
     = \frac{1}{n}
     \sum_{i = 1}^{n}
     \delta
     \bigg\{ \Big( y_i, 
     \frac{x_i^\transp \vhbeta_\lambda^\ens - \tfrac{1}{n} \tr[\mL_\lambda^\ens] y_i}{1 - \tfrac{1}{n} \tr[\mL_\lambda^\ens]} \Big)
     \bigg\},
     \quad
    \text{where}
    \quad
    \mL^\ens_\lambda
    =
    \frac{1}{K}
    \sum_{k=1}^{K}
    \mL_\lambda^k.
\end{equation}
Here $\delta\{\va\}$ denotes a Dirac measure located at an atom $\va \in \reals^2$. 
To give some intuition as to why this is a reasonable choice, consider that when fitting a model, the predictions on training points will be excessively correlated with the training responses. 
In order to match the test distribution, we need to cancel this increased correlation, which we accomplish by subtracting an appropriately scaled $y_i$.

Using this empirical distribution, we form the plug-in GCV risk functional estimators
\begin{equation}
    \label{eq:def-gcv}
    \widehat{T}(\vhbeta_\lambda^\ens)
    = \frac{1}{n} \sum_{i=1}^{n}
    t\Big(y_i, \frac{x_i^\transp \vhbeta_\lambda^\ens - \tfrac{1}{n} \tr[\mL_\lambda^\ens] y_i}{1 - \tfrac{1}{n} \tr[\mL_\lambda^\ens]} 
    \Big)
    \quad
    \text{and}
    \quad
    \hR(\vhbeta^\ens_\lambda)
    =  
    \frac
    {\tfrac{1}{n} \| \vy - \mX \vhbeta^\ens_\lambda \|_2^2}
    {( 1 - \tfrac{1}{n} \tr[\mL^\ens_\lambda] )^2}.
\end{equation}
In the case where $\lambda \to 0^+$ but ridgeless regression is well-defined, the denominator may tend to zero. 
However, the numerator will also tend to zero, and therefore one should interpret this quantity as its analytic continuation, which is also well-defined. 
In practice, if so desired, one can choose very small (positive and negative) $\lambda$ near zero and interpolate for a first-order approximation.

We emphasize that the GCV-corrected predictions are ``free lunch'' in most circumstances. 
For example, when tuning over $\lambda$, it is common to precompute a decomposition of $\mX \mS_k$ such that subsequent matrix inversions for each $\lambda$ are very inexpensive, and the same decomposition can be used to evaluate $\smash{\tfrac{1}{n} \tr[\mL_\lambda^\ens]}$ exactly. 
Otherwise, Monte-Carlo trace estimation is a common strategy for GCV~\citep{girard1989montecarlo,hutchinson1989trace} that yields consistent estimators using very few (even single) samples, such that the additional computational cost is essentially the same as~fitting~the~model.
See \Cref{sec:complexity_comparisons} for computational complexity comparisons of various cross-validation methods.

\section{Squared risk asymptotics and consistency}
\label{sec:squared-risk}

We now derive the asymptotics of squared risk and its GCV estimator for the finite ensemble sketched estimator.
The special structure of the squared risk allows us to obtain explicit forms of the asymptotics that shed light on the dependence of both the ensemble risk and GCV on $K$, the size of the ensemble.
We then show consistency of GCV for squared risk using these asymptotics.

We express our asymptotic results using the asymptotic equivalence notation $\mA_n \asympequi \mB_n$, which means that for any sequence of $\mTheta_n$ having $\smash{\norm[\tr]{\mTheta_n} = \tr\bracket{(\mTheta_n \mTheta_n^\transp)^{1/2}}}$ uniformly bounded in $n$, $\smash{\lim_{n \to \infty} \tr\bracket{\mTheta_n (\mA_n - \mB_n)} = 0}$ almost surely. 
In the case that $\mA_n$ and $\mB_n$ are scalars $a_n$ and $b_n$ such as risk estimators, this reduces to $\lim_{n \to \infty} (a_n - b_n) = 0$.
Our forthcoming results apply to a sequence of problems of increasing dimensionality proportional to $n$, and we omit the explicit dependence on $n$ in our statements.

\subsection{Asymptotically free sketching}
\label{sec:asymptotically-free-sketching}

For our theoretical analysis, we need our sketching matrix $\mS$ to have favorable properties.
The sketch should preserve much of the essential structure of the data, even through (regularized) matrix inversion.
A sufficient yet quite general condition for this is \emph{freeness}~\citep{voiculescu1997free,mingo2017free}.
\begin{assumption}
    [Sketch structure]
    \label{cond:sketch}
    Let $\mS \mS^\transp$ and $\tfrac{1}{n} \mX^\transp \mX$ converge almost surely to  bounded operators infinitesimally free with respect to $\smash{(\tfrac{1}{p}\tr [\cdot], \tr [\mTheta (\cdot)])}$ for any $\mTheta$ independent of $\mS$ with $\norm[\tr]{\mTheta}$ uniformly bounded, and let $\mS \mS^\transp$ have limiting S-transform $\smash{\Sxf_{\mS \mS^\transp}}$ analytic on $\complexset^{-}$.
\end{assumption}
We give a background on freeness including infintesimal freeness~\citep{shlyakhtenko2018infinitesimal} in \Cref{sec:background-freesketching}.
Intuitively, freeness of a pair of operators $\mA$ and $\mB$ means that the eigenvectors of one are completely unaligned or incoherent with the eigenvectors of the other. 
For example, if $\smash{\mA = \mU \mD \mU^\transp}$ for a uniformly random unitary matrix $\mU$ drawn independently of positive semidefinite $\mB$ and $\mD$, then $\mA$ and $\mB$ are almost surely asymptotically infinitesimally free~\citep{cebron2022freeness}.\footnote{Note that this includes the two sketches most commonly studied analytically: those with i.i.d.\ Gaussian entries, and random orthogonal projections.} 
For this reason, we expect any sketch that is \emph{rotationally invariant}, a desired property of sketches in practice as we do not wish the sketch to prefer any particular dimensions of our data, to satisfy Assumption~\ref{cond:sketch}.
We refer readers to Chapter 2.4 of \cite{tulino2004random} for some instances of asymptotic freeness.
For further details on infinitesimal freeness, see \Cref{sec:background-freesketching}. 

The property that the sketch preserves the structure of the data is captured in the notion of subordination and conditional expectation in free probability~\citep{biane1998}, closely related to the \emph{deterministic equivalents}~\citep{dobriban_sheng_2020,dobriban_sheng_2021} used in random matrix theory. 
The work in \cite{lejeune2022asymptotics} recently extended such results to infinitesimally free operators in the context of sketching, which will form the basis of our analysis.\footnote{The original theorem in \cite{lejeune2022asymptotics} was given for complex $\lambda$ and $\mu$, but the stated version follows by analytic continuation to the real line.}
For the statement to follow, define $\smash{\mhSigma \defeq \tfrac{1}{n} \mX^\top \mX}$ and $\smash{\lambda_0 \defeq -\liminf_{p \to \infty}{\lambdaminnz(\mS^\transp \mhSigma \mS)}}$.
Here $\smash{\lambdaminnz(\mA)}$ denotes the minimum nonzero eigenvalue of a symmetric matrix $\mA$.
\begin{theorem}
    [Free sketching equivalence; \cite{lejeune2022asymptotics}, Theorem 7.2]
    \label{thm:sketched-pseudoinverse}
    Under \Cref{cond:sketch}, for all $\lambda > \lambda_0$,
    \begin{align}
        \mS \biginv{\mS^\transp \mhSigma \mS + \lambda \mI_q} \mS^\transp
        \asympequi \biginv{\mhSigma + \mu \mI_p},
        \label{eq:cond-sketch}
    \end{align}
    where $\smash{\mu > -\lambdaminnz(\mhSigma)}$ is increasing in $\lambda > \lambda_0$ and satisfies
    \begin{align}
        \label{eq:dual-fp-main}
        \mu 
        \asympequi
        \lambda \Sxf_{\mS \mS^\ctransp}
        \bigparen{-\tfrac{1}{p} \tr \bigbracket{\mS^\ctransp \mhSigma \mS \biginv{\mS^\ctransp \mhSigma \mS + \lambda \mI_q}}}
        \asympequi 
        \lambda \Sxf_{\mS \mS^\ctransp}
        \bigparen{-\tfrac{1}{p} \tr \bigbracket{\mhSigma \biginv{\mhSigma + \mu \mI_p}}}.
    \end{align}
\end{theorem}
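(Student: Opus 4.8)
The plan is to prove \Cref{thm:sketched-pseudoinverse} in two stages: an exact algebraic reduction to a resolvent of a \emph{product} of the asymptotically free matrices $\mS\mS^\transp$ and $\mhSigma$, followed by a free-probability computation of the corresponding deterministic equivalent.

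\emph{Step 1 (push-through reduction).} First I would record the exact matrix identity
\begin{align}
    \mS \biginv{\mS^\transp \mhSigma \mS + \lambda \mI_q} \mS^\transp
    = \biginv{\mS \mS^\transp \mhSigma + \lambda \mI_p} \mS \mS^\transp,
\end{align}
valid whenever $\mS^\transp \mhSigma \mS + \lambda \mI_q$ is invertible (for $\lambda \le 0$ this requires $\lambda > \lambda_0$ together with the regularized-pseudoinverse reading on the kernel of $\mS^\transp \mhSigma \mS$, consistent with the remark following \eqref{eq:ensemble-estimator}). It is verified by left-multiplying both sides by $\mS \mS^\transp \mhSigma + \lambda \mI_p$ and collapsing $\mS(\mS^\transp \mhSigma \mS + \lambda \mI_q)\biginv{\mS^\transp \mhSigma \mS + \lambda \mI_q}\mS^\transp = \mS\mS^\transp$. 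After this reduction the claim becomes the deterministic equivalent $\biginv{\mS\mS^\transp\mhSigma + \lambda\mI_p}\mS\mS^\transp \asympequi \biginv{\mhSigma + \mu\mI_p}$ for the pair $\mS\mS^\transp$, $\mhSigma$.

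\emph{Step 2 (deterministic equivalent via free subordination).} Under \Cref{cond:sketch}, $\mS\mS^\transp$ and $\mhSigma$ converge to bounded operators $a,b$ that are infinitesimally free, and using $\biginv{ab+\lambda}a = a^{1/2}\biginv{a^{1/2}ba^{1/2}+\lambda}a^{1/2}$ the target is the operator-valued statement that the conditional expectation of $a^{1/2}\biginv{a^{1/2}ba^{1/2}+\lambda}a^{1/2}$ onto the algebra of $b$ equals $\biginv{b+\mu}$. Ordinary asymptotic freeness already yields the scalar $\tfrac1p\tr[\cdot]$ identity; the \emph{infinitesimal} part of the freeness hypothesis is precisely what promotes it to the equivalence $\asympequi$ tested against every $\mTheta$ independent of $\mS$ with $\norm[\tr]{\mTheta}$ uniformly bounded~\citep{shlyakhtenko2018infinitesimal}. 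I would obtain the operator identity from the subordination property of free multiplicative convolution~\citep{biane1998,mingo2017free}: the analytic subordination function of the free multiplicative convolution of the spectral laws of $\mS\mS^\transp$ and $\mhSigma$ — well-defined on the region where $\Sxf_{\mS\mS^\transp}$ is analytic, which is exactly the analyticity hypothesis on $\complexset^-$ — both forces the conditional expectation to be a resolvent of $b$ and pins down the shift $\mu = \mu(\lambda)$. Equivalently, in random-matrix-theory terms: posit the ansatz $\biginv{\mS\mS^\transp\mhSigma+\lambda\mI_p}\mS\mS^\transp \asympequi \biginv{\mhSigma+\mu\mI_p}$, whose existence is guaranteed by asymptotic freeness, and solve self-consistently for $\mu$.

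\emph{Step 3 (identifying $\mu$ through the S-transform).} To turn the subordination relation into \eqref{eq:dual-fp-main}, I would use the multiplicativity of the S-transform under free multiplicative convolution together with its defining functional equations. The natural argument is the moment-generating-type quantity $-\tfrac1p\tr[\mhSigma\biginv{\mhSigma+\mu\mI_p}] = \mu\,\tfrac1p\tr[\biginv{\mhSigma+\mu\mI_p}] - 1$, at which the S-transform of $\mhSigma$ is evaluated; this yields $\mu \asympequi \lambda\,\Sxf_{\mS\mS^\transp}\bigparen{-\tfrac1p\tr[\mhSigma\biginv{\mhSigma+\mu\mI_p}]}$, the second form in \eqref{eq:dual-fp-main}. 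The first (primal) form of the argument then follows from $\tfrac1p\tr[\mS^\transp\mhSigma\mS\biginv{\mS^\transp\mhSigma\mS+\lambda\mI_q}] = \tfrac1p\tr[\mhSigma\mS\biginv{\mS^\transp\mhSigma\mS+\lambda\mI_q}\mS^\transp]$ by cyclicity, which by the already-established \eqref{eq:cond-sketch} tested with $\mTheta = \tfrac1p\mhSigma$ is asymptotically $\tfrac1p\tr[\mhSigma\biginv{\mhSigma+\mu\mI_p}]$, so the two S-transform arguments coincide. Finally, monotonicity of $\mu$ in $\lambda > \lambda_0$ and the bound $\mu > -\lambdaminnz(\mhSigma)$ follow from the fixed-point structure: $\mu \mapsto \tfrac1p\tr[\mhSigma\biginv{\mhSigma+\mu\mI_p}]$ is strictly decreasing, so combined with the monotonicity of the S-transform on the relevant interval the solution $\mu(\lambda)$ is increasing, and for $\lambda > \lambda_0$ the subordination function stays analytic, so the solution persists to the threshold; the real-line statement follows from its complex-variable counterpart by analytic continuation (as noted for the cited version).

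\emph{Main obstacle.} The crux is Steps 2--3: making the subordination argument rigorous at the level of \emph{deterministic equivalents} rather than merely of normalized traces — i.e., showing that infinitesimal freeness of the random matrices $\mS\mS^\transp$ and $\mhSigma$ passes to the limiting operator identity tested against arbitrary bounded-trace-norm $\mTheta$ — while simultaneously handling the rank-deficient/rectangular structure (an atom at $0$ in the limiting law of $\mS\mS^\transp$ when $q<p$, which is exactly what makes $\mu$ nonzero), possibly nonpositive $\lambda$, and the existence and uniqueness of the fixed point $\mu$ all the way down to $\lambda_0$. The analyticity of $\Sxf_{\mS\mS^\transp}$ on $\complexset^-$ assumed in \Cref{cond:sketch} is precisely what is needed to push the argument through that regime.
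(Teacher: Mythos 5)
The paper does not prove this statement: it is imported verbatim from \cite{lejeune2022asymptotics} (Theorem 7.2), with only a footnote observing that the stated real-$\lambda$ version follows from the complex-$\lambda$ original by analytic continuation. There is therefore no in-paper proof to compare your attempt against; any assessment has to be of your outline on its own terms and against the cited reference.

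On those terms, your roadmap is accurate and matches the strategy of the external proof: the push-through identity in Step~1 is correct (indeed $(\mS\mS^\transp\mhSigma+\lambda\mI_p)\mS = \mS(\mS^\transp\mhSigma\mS+\lambda\mI_q)$ gives it immediately), the reduction to subordination for the free multiplicative convolution of the limits of $\mS\mS^\transp$ and $\mhSigma$ is the right framing, and the cyclicity argument showing that the two S-transform arguments in \eqref{eq:dual-fp-main} coincide — by testing \eqref{eq:cond-sketch} against $\mTheta = \tfrac{1}{p}\mhSigma$, which has uniformly bounded trace norm under \Cref{cond:sketch} — is sound. The gap is that Step~2, which you correctly flag as the crux, is stated as an ansatz rather than proved: upgrading freeness at the level of normalized traces to the deterministic-equivalence relation $\asympequi$ tested against arbitrary bounded-trace-norm $\mTheta$ is precisely the content of the infinitesimal-freeness machinery developed in \cite{lejeune2022asymptotics}, and nothing in your proposal substitutes for it. Likewise the existence, uniqueness, and monotonicity of $\mu(\lambda)$ down to $\lambda_0$ (including negative $\lambda$ and the atom at zero when $q<p$) are asserted from the "fixed-point structure" without an argument. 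What you have is a faithful sketch of someone else's proof, which is appropriate here only because the paper itself also takes the theorem as given.
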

Put another way, when we sketch $\smash{\mhSigma}$ and compute a regularized inverse, it is (in a first-order sense) as if we had computed an unsketched regularized inverse of $\smash{\mhSigma}$, potentially with a different ``implicit'' regularization strength $\mu$ instead of $\lambda$.
Since the result holds for free sketching matrices, we expect this to include fast practical sketches such as CountSketch~\citep{charikar2002frequent} and subsampled randomized Fourier and Hadamard transforms (SRFT/SRHT)~\citep{tropp2011hadamard,lacotte2020optimal}, which were demonstrated empirically to satisfy the same relationship by \cite{lejeune2022asymptotics}, and for which we also provide further empirical support in this work in \Cref{sec:asymptotic-freeness,sec:empirical-subordination-relations}.

While the form of the relationship between the original and implicit regularization parameters $\lambda$ and $\mu$ in \Cref{thm:sketched-pseudoinverse} may seem complicated, the remarkable fact is that our GCV consistency results in the next section are agnostic to the specific form of any of the quantities involved (such as $\smash{\Sxf_{\mS \mS^\transp}}$ and $\mu$). 
That is, GCV is able to make the appropriate correction in a way that adapts to the specific choice of sketch, such that the statistician need not worry. 
Nevertheless, for the interested reader we provide a listing of known examples of sketches satisfying \Cref{cond:sketch} and their corresponding S-transforms in \Cref{tab:S-functions} in \Cref{app:S-transforms}, parameterized by $\alpha = q/p$.

\subsection{Asymptotic decompositions and consistency}

We first state a result on the decomposition of squared risk and the GCV estimator.
Here we let $\smash{\vhbeta_{\mu}^\ridge}$ denote the ridge estimator fit on unsketched data at the implicit regularization parameter $\mu$.
With slight overloading of notation, let us now define $\lambda_0 = - \liminf_{p \to \infty} \min_{k \in [K]} \lambdaminnz(\mS_k^\transp \mhSigma \mS_k)$ (since both quantities match, this is a harmless overloading).
In addition, define $\mSigma = \EE[\bx_0 \bx_0^\top]$.
\begin{theorem}
    [Risk and GCV asymptotics]
    \label{thm:risk-gcv-asymptotics}
    Suppose \Cref{cond:sketch} holds, and that the operator norm of $\mSigma$ and second moment of $y_0$ 
    are uniformly bounded in $p$.
    Then, for $\lambda > \lambda_0$ and all $K$,
    \begin{gather}
        \label{eq:risk-gcv-decomp}
        R \bigparen{\widehat{\vbeta}_{\lambda}^\ens}
        \asympequi
        R \bigparen{ \widehat{\vbeta}_{\mu}^\ridge} + \frac{\mu' \Delta}{K}
        \quad
        \text{and}
        \quad
        \hR \bigparen{\widehat{\vbeta}_{\lambda}^\ens}
        \asympequi
        \hR \bigparen{ \widehat{\vbeta}_{\mu}^\ridge} + \frac{\mu'' \Delta}{K}, 
    \end{gather}
    where $\mu$ is as given in \Cref{thm:sketched-pseudoinverse}, $\Delta = \tfrac{1}{n} \vy^\transp (\tfrac{1}{n} \mX \mX^\transp + \mu \bI_n)^{-2} \vy \geq 0$,
    and $\mu' \geq 0$ is a certain non-negative inflation factor in the risk that only depends on $\smash{\Sxf_{\mS \mS^\transp}}$, $\smash{\mhSigma}$, and $\mSigma$, while $\mu'' \geq 0$ is a certain non-negative inflation factor in the risk estimator that only depends on $\smash{\Sxf_{\mS \mS^\transp}}$ and $\smash{\mhSigma}$.
\end{theorem}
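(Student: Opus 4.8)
The plan is to reduce both asymptotic equivalences in \eqref{eq:risk-gcv-decomp} to algebraic identities involving resolvents of $\mhSigma$, then invoke \Cref{thm:sketched-pseudoinverse} together with standard calculus-of-deterministic-equivalents manipulations. The central observation is that the ensemble estimator is an average of $K$ i.i.d.\ sketched predictors, so its squared risk naturally splits as a ``mean term'' (the risk of the limiting averaged predictor) plus a ``fluctuation term'' of order $1/K$ coming from the variance of a single sketch around its conditional mean. Concretely, writing $\vhbeta_\lambda^\ens = \frac{1}{K}\sum_k \vhbeta_\lambda^k$ and expanding $R(\vhbeta_\lambda^\ens) = \EE_{\vx_0,y_0}[(y_0 - \vx_0^\transp \vhbeta_\lambda^\ens)^2]$, one gets a bias-like part governed by $\bar{\vbeta} \defeq \lim_K \vhbeta_\lambda^\ens$ and a part equal to $\frac{1}{K}\tr[\mSigma \,\Cov(\vhbeta_\lambda^1 \mid \mX,\vy)]$. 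The first step is therefore to identify $\bar{\vbeta}$: by \Cref{thm:sketched-pseudoinverse}, $\mS_k(\mS_k^\transp \mhSigma \mS_k + \lambda\mI_q)^{-1}\mS_k^\transp \asympequi (\mhSigma + \mu\mI_p)^{-1}$, so $\vhbeta_\lambda^k \asympequi (\mhSigma + \mu\mI_p)^{-1}\mhSigma\,\vbeta_{\mathrm{ols}}$-style expression $= \vhbeta_\mu^\ridge$, and hence the mean term is $R(\vhbeta_\mu^\ridge)$.

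Next I would compute the $1/K$ correction. The conditional covariance of a single sketched predictor is $\Cov(\vhbeta_\lambda^1) = \EE[\vhbeta_\lambda^1 (\vhbeta_\lambda^1)^\transp] - \bar{\vbeta}\bar{\vbeta}^\transp$, and using \eqref{eq:ensemble-estimator} this is a function of $\mS_1 (\mS_1^\transp\mhSigma\mS_1 + \lambda\mI_q)^{-1}\mS_1^\transp$ applied on both sides to $\mhSigma^{1/2}$ and the response. The key is that the ``second-order'' free-probability data — i.e.\ the fluctuation of the sketched resolvent around its deterministic equivalent — produces exactly a rank-one-in-$\vy$ object proportional to $\Delta = \tfrac1n \vy^\transp(\tfrac1n\mX\mX^\transp + \mu\mI_n)^{-2}\vy$, which is the squared-norm of the residual direction $(\mhSigma+\mu\mI_p)^{-1}\mhSigma^{1/2}\cdot(\text{data})$ written in sample space. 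After collecting terms, the $\mSigma$-weighted trace of this covariance gives $\mu'\Delta/K$, where $\mu'$ is a scalar depending only on $\Sxf_{\mS\mS^\transp}$, $\mhSigma$, $\mSigma$ through the derivative of the subordination map $\lambda \mapsto \mu$ and related S-transform quantities — this is where the notation $\mu'$ (suggestive of $d\mu/d\lambda$ or a companion derivative) comes from. For the GCV side, the numerator $\tfrac1n\|\vy - \mX\vhbeta_\lambda^\ens\|_2^2$ and the trace correction $\tfrac1n\tr[\mL_\lambda^\ens]$ are handled identically: $\tfrac1n\tr[\mL_\lambda^\ens] \asympequi \tfrac1n\tr[\mhSigma(\mhSigma+\mu\mI_p)^{-1}]$ by \Cref{thm:sketched-pseudoinverse}, and the residual norm expands into a mean term reproducing $\hR(\vhbeta_\mu^\ridge)$ plus a $1/K$ fluctuation that again factors through $\Delta$; the resulting scalar $\mu''$ depends only on $\Sxf_{\mS\mS^\transp}$ and $\mhSigma$ (not $\mSigma$) because the GCV estimator is built purely from training quantities.

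The main obstacle is the second-order term: controlling the fluctuation of $\mS_1(\mS_1^\transp\mhSigma\mS_1+\lambda\mI_q)^{-1}\mS_1^\transp$ around $(\mhSigma+\mu\mI_p)^{-1}$ precisely enough to extract the constant $\mu'$ (and $\mu''$), rather than merely knowing it vanishes. First-order freeness (Assumption~\ref{cond:sketch}) gives the deterministic equivalent but not the covariance; this is exactly why \emph{infinitesimal} freeness is assumed, as it controls the $1/p$-order correction, and one needs to combine it with the variance structure in $K$ (which is a genuine average of independent sketches, so a law-of-large-numbers / CLT-type argument at fixed $n$ then $n\to\infty$ applies). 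I expect the cleanest route is to first prove the identity for the quadratic form $\vy^\transp(\cdots)\vy$ by a resolvent-difference computation conditional on $\mX$ — showing the cross terms between different sketches vanish and the diagonal terms each contribute the same deterministic limit — and only at the end package the sketch-dependent scalars via the S-transform relations in \eqref{eq:dual-fp-main}. Verifying nonnegativity of $\mu'$ and $\mu''$ should fall out because each is manifestly a variance-type quantity (a trace of a product of PSD matrices, or a derivative of a monotone map), and $\Delta \geq 0$ is immediate from the PSD-ness of $(\tfrac1n\mX\mX^\transp+\mu\mI_n)^{-2}$ on the region $\mu > -\lambdaminnz(\mhSigma)$ guaranteed by \Cref{thm:sketched-pseudoinverse}.
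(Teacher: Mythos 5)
Your proposal matches the paper's proof in all essentials: the bias term comes from the first-order free-sketching equivalence of \Cref{thm:sketched-pseudoinverse}, the $1/K$ correction comes from the diagonal ($k=\ell$) terms of the ensemble quadratic form (the cross terms vanishing by independence of the sketches), and the scalars $\mu'$, $\mu''$ are extracted from a second-order resolvent equivalence tied to $\partial\mu/\partial\lambda$ and $\Sxf_{\mS\mS^\transp}'$, with the GCV denominator handled by the first-order equivalence of $\tr[\mL_\lambda^\ens]$. The only cosmetic difference is that the paper obtains the second-order (same-sketch) equivalent by perturbing $\mhSigma \mapsto \mhSigma + z\mPsi$ and differentiating the subordination relation at $z=0$, rather than by a covariance/CLT-style argument, but this is exactly the derivative computation you anticipate.
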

In other words, this result gives \emph{bias--variance} decompositions for both squared risk and its GCV estimator for sketched ensembles.
The result says that the risk of the sketched predictor is equal to the risk of the unsketched equivalent implicit ridge regressor (bias) plus a term due to the randomness of the sketching that depends on the inflation factor $\mu'$ or $\mu''$ (variance), which is controlled by the ensemble size at a rate of $1/K$ (see \Cref{fig:rate-plot-risk-gcv} for a numerical verification of this rate).
It is worth mentioning that \Cref{thm:risk-gcv-asymptotics} holds true even when the distribution of $(\vx_0, y_0)$ differs from the training data.
In other words, the asymptotics decompositions given in \eqref{eq:risk-gcv-decomp} apply even to out-of-distribution (OOD) risks, regardless of the consistency of GCV that we will state shortly.
Additionally, we have not made any distributional assumptions on the design matrix $\mX$ and the response vector $\vy$ beyond the norm boundedness.
The core of the statement is driven by asymptotic freeness between the sketching and data matrices. 

We refer the reader to \Cref{thm:risk-gcv-asymptotics-appendix} in \Cref{app:proofs-sec:sqaured-risk} for precise expressions for $\mu'$ and $\mu''$, and to \cite{lejeune2022asymptotics} for illustrations of their relationship of these parameters with $\alpha$ and $\lambda$ in the case of i.i.d.\ sketching.
For expressions of limiting non-sketched risk and GCV for ridge regression, we also refer to \cite{patil2021uniform}, which could be combined with \eqref{eq:risk-gcv-decomp} to obtain exact formulas for asymptotic risk and GCV for sketched ridge regression, or to \cite{bach2023high} for exact squared risk expressions in the i.i.d.\ sketching case for $K=1$.

For our consistency result, we impose certain mild random matrix assumptions on the feature vectors and assume a mild bounded moment condition on the response variable.
Notably, we do not require any specific model assumption on the response variable $y$ in the way that it relates to the feature vector $\vx$.
Thus, all of our results are applicable in a model-free setting.
\begin{assumption}
    [Data structure]
    \label{asm:data}
    The feature vector decomposes as $\smash{\vx = \mSigma^{1/2}\vz}$, where $\vz\in\RR^{p}$ contains i.i.d.\ entries with mean $0$, variance $1$, bounded moments of order $4+\delta$ for some $\delta > 0$, and $\mSigma \in \RR^{p \times p}$ is a symmetric matrix with eigenvalues uniformly bounded between $r_{\min}>0$ and $r_{\max}<\infty$.
    The response $y$ has mean $0$ and bounded moment of order $4+\delta$ for some $\delta>0$. 
\end{assumption}
The assumption of zero mean in the features and response is only done for mathematical simplicity. 
To deal with non-zero mean, one can add an (unregularized) intercept to the predictor, and all of our results can be suitably adapted. 
We apply such an intercept in our experiments on real-world data.

It has been recently shown that GCV for unsketched ridge regression is an asymptotically consistent estimator of risk~\citep{patil2021uniform} under \Cref{asm:data}, so given our bias--variance decomposition in \eqref{eq:risk-gcv-decomp}, the only question is whether the variance term from GCV is a consistent estimator of the variance term of risk. 
This indeed turns out to be the case, as we state in the following theorem for squared risk. 
\begin{theorem}
    [GCV consistency]
    \label{thm:gcv-consistency}
    Under \Cref{asm:data,cond:sketch},
    for $\lambda > \lambda_0$ and all $K$,
    \begin{equation}
        \label{eq:gcv-consistency}
        \mu'
        \asympequi
        \mu'',
        \quad
        \text{and therefore}
        \quad
        \hR(\vhbeta_{\lambda}^\ens)
        \asympequi R(\vhbeta_{\lambda}^\ens).
    \end{equation}
\end{theorem}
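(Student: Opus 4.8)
The plan is to reduce the claimed consistency to the single identity $\mu'\asympequi\mu''$ and then to establish that identity through a random-matrix calibration relation for the sample covariance.

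\emph{Reduction.} \Cref{thm:risk-gcv-asymptotics} already supplies the two decompositions $R(\vhbeta_\lambda^\ens)\asympequi R(\vhbeta_\mu^\ridge)+\mu'\Delta/K$ and $\hR(\vhbeta_\lambda^\ens)\asympequi\hR(\vhbeta_\mu^\ridge)+\mu''\Delta/K$, valid for $\lambda>\lambda_0$ and each fixed $K$. Under \Cref{asm:data} the implicit parameter $\mu$ of \Cref{thm:sketched-pseudoinverse} satisfies $\mu>-\lambdaminnz(\mhSigma)$, so the known consistency of GCV for unsketched ridge regression \citep{patil2021uniform} applies at regularization level $\mu$, giving $\hR(\vhbeta_\mu^\ridge)\asympequi R(\vhbeta_\mu^\ridge)$. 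Since $\Delta=\tfrac1n\vy^\transp(\tfrac1n\mX\mX^\transp+\mu\bI_n)^{-2}\vy$ is nonnegative and, by the operator-norm bound on $\mSigma$, the moment bound on $y_0$, and $\mu>-\lambdaminnz(\mhSigma)$, is almost surely bounded, and since $K$ is fixed, it will be enough to prove $\mu'\asympequi\mu''$: then $\mu'\Delta/K\asympequi\mu''\Delta/K$ and transitivity of $\asympequi$ yields $\hR(\vhbeta_\lambda^\ens)\asympequi R(\vhbeta_\lambda^\ens)$. In other words, the only remaining question is whether the GCV variance term estimates the risk variance term.

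\emph{Locating the discrepancy.} From the explicit formulas in \Cref{thm:risk-gcv-asymptotics-appendix}, both inflation factors are built from the S-transform $\Sxf_{\mS\mS^\transp}$ (and its derivative) evaluated at the argument appearing in \Cref{thm:sketched-pseudoinverse}, together with normalized traces of powers of the resolvent $(\mhSigma+\mu\mI_p)^{-1}$. The factor $\mu''$, coming from the in-sample GCV estimator, involves only $\mhSigma$-weighted such traces, while $\mu'$, coming from the out-of-sample squared risk, is the same expression except that the weighting matrix in one resolvent trace is the test covariance $\mSigma$ rather than $\mhSigma$ --- concretely, $\mu'$ carries a term of the form $\tfrac1p\tr[\mSigma(\mhSigma+\mu\mI_p)^{-2}]$ where $\mu''$ carries $\tfrac1n\tr[\mhSigma(\mhSigma+\mu\mI_p)^{-2}]$, together with an analogous first-power pair. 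So I need to show that, under \Cref{asm:data}, these $\mSigma$-weighted traces agree asymptotically with their $\mhSigma$-weighted analogues up to exactly the normalization already present in the GCV denominator $(1-\tfrac1n\tr[\mL_\lambda^\ens])^2$.

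\emph{Calibration.} Under \Cref{asm:data} one has $\mhSigma=\mSigma^{1/2}(\tfrac1n\mZ^\transp\mZ)\mSigma^{1/2}$ with $\mZ$ having i.i.d.\ entries, and the sample covariance admits an anisotropic deterministic equivalent \citep{dobriban_sheng_2020,dobriban_sheng_2021}: there is a deterministic companion scalar $v(\mu)$ such that $(\mhSigma+\mu\mI_p)^{-1}$ is $\asympequi$ a fixed rational function of $\mSigma$, which pins down $\tfrac1p\tr[\mSigma(\mhSigma+\mu\mI_p)^{-1}]$ and, by differentiating in $\mu$, the second-power traces, and which is tied to the effective degrees of freedom $\tfrac1n\tr[\mhSigma(\mhSigma+\mu\mI_p)^{-1}]$ through the standard fixed-point equation. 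Substituting these relations into the expression for $\mu'$ converts each $\mSigma$-weighted trace into the corresponding $\mhSigma$-weighted trace times precisely the calibration factor that the GCV denominator contributes, so that the result collapses to $\mu''$. This is the same mechanism that underlies GCV consistency for unsketched ridge \citep{patil2021uniform}, now routed through the implicit-regularization map of \Cref{thm:sketched-pseudoinverse}.

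\emph{Main obstacle.} The delicate part is this last substitution: one must handle the $\mSigma$-anisotropy in $\mu'$ at the same time as the sketch-dependent fixed point defining $\mu$ (which depends on $\mS$ through both $\Sxf_{\mS\mS^\transp}$ and $\mhSigma$), and verify that the calibration factor produced matches the GCV normalization exactly, not merely up to a constant. Keeping track of which resolvent traces are effectively isotropic --- and hence collapse under \Cref{asm:data} --- versus genuinely $\mSigma$-dependent is where essentially all the work lies; a secondary, routine technicality is upgrading the associated concentration statements to almost-sure convergence under the $4+\delta$ moment condition.
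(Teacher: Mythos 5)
Your proposal is correct and follows essentially the same route as the paper: reduce to $\mu'\asympequi\mu''$ via the decomposition in \Cref{thm:risk-gcv-asymptotics} together with unsketched GCV consistency at level $\mu$, then verify that $\tfrac{1}{p}\tr[\mSigma(\mhSigma+\mu\mI_p)^{-2}]$ matches $\tfrac{1}{p}\tr[\mhSigma(\mhSigma+\mu\mI_p)^{-2}]$ divided by the squared GCV denominator, using the first- and second-order deterministic equivalents for ridge resolvents and the fixed-point identity $\mu v = 1-\gamma+\gamma\tfrac{1}{p}\tr[(v\mSigma+\mI_p)^{-1}]$. This is exactly the content of the paper's \Cref{lem:dual-mu'-mu''-matching}.
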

The remarkableness of this result is its generality: we have made no assumption on a particular choice of sketching matrix (see \Cref{fig:gcv_paths}) or the size $K$ of the ensemble. 
We also make no assumption other than boundedness on the covariance $\mSigma$, and we do not require any model on the relation of the response to the data. 
Furthermore, this result is not marginal but rather conditional on $\smash{\mX, \vy, (\mS_k)_{k=1}^K}$, meaning that we can trust GCV to be consistent for tuning on a single learning problem.
We also emphasize that our results holds for positive, zero, and even negative $\lambda$ generally speaking.
This is important, as negative regularization can be optimal in ridge regression in certain circumstances \citep{kobak_lomond_sanchez_2020,wu_xu_2020,richards2021asymptotics} and even more commonly in sketched ridge ensembles \citep{lejeune2022asymptotics}, as we demonstrate in \Cref{fig:real-data}.

An astute reader will observe that for the case of $K = 1$, that is, sketched ridge regression, one can absorb the sketching matrix $\mS$ into the data matrix $\mX$ such that the transformed data \smash{$\widetilde{\mX} = \mX \mS$} satisfies \Cref{asm:data}.
We therefore directly obtain the consistency of GCV in this case using results of \cite{patil2021uniform}. 
The novel aspect of \Cref{thm:gcv-consistency} is thus that the consistency of GCV holds for ensembles of any $K$, which is not obvious, due to the interactions across predictors in squared error. 
The non-triviality of this result is perhaps subtle: one may wonder whether GCV is always consistent under any sketching setting. 
However, as we discuss later in \Cref{prop:gcv-primal-fails}, when sketching observations, GCV fails to be consistent, and so we cannot blindly assert that sketching and GCV are always compatible.

\begin{figure*}[t]
    \centering
    \includegraphics[width=0.99\textwidth]{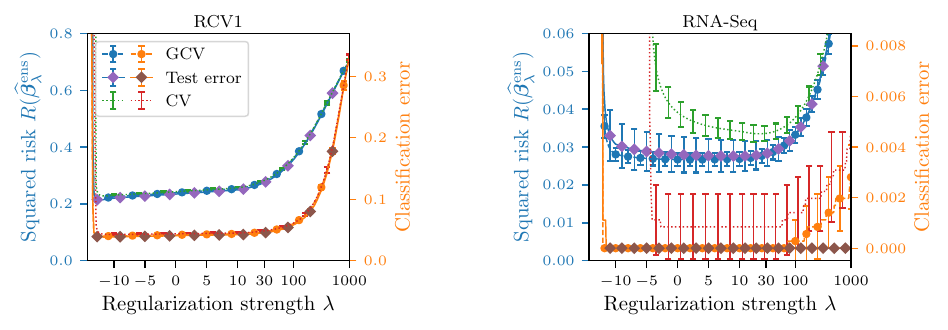}
    \caption{
    \textbf{GCV provides very accurate risk estimates for real-world data.} 
    We fit ridge regression ensembles of size $K = 5$ using CountSketch~\citep{charikar2002frequent} on binary $\pm 1$ labels from RCV1 \citep{lewis2004rcv1} ($n = 20000$, $p = 30617$, $q = 515$) (\textbf{left}) and RNA-Seq \citep{weinstein2013tcga} ($n = 356$, $p = 20223$, $q = 99$) (\textbf{right}). 
    GCV (dashed, circles) matches test risk (solid, diamonds) and improves upon 2-fold CV (dotted) for both squared error (blue, green) and classification error (orange, red). 
    CV provides poorer estimates for less positive $\lambda$, heavily exaggerated when $n$ is small such as in RNA-Seq. 
    Error bars denote standard deviation over 10 trials.
    }
    \label{fig:real-data}
\end{figure*}

\section{General functional consistency}
\label{sec:functional-consistency}

In the previous section, we obtained an elegant decomposition for squared risk and the GCV estimator that cleanly captures the effect of ensembling as controlling the variance from an equivalent unsketched implicit ridge regression risk at a rate of $1/K$. 
However, we are also interested in using GCV for evaluating other risk functionals, which do not yield bias--variance decompositions that we can manipulate in the same way.

Fortunately, however, we can leverage the close connection between GCV and  LOOCV to prove the consistency for a broad class of \emph{subquadratic} risk functionals. 
As a result, we also certify that the \emph{distribution} of the GCV-corrected predictions converges to the test distribution.
We show convergence for all error functions $t$ in \eqref{eq:func_def} satisfying the following subquadratic growth condition, commonly used in the approximate message passing (AMP) literature (see, e.g., \citealp{bayati2011amp}).
\begin{assumption}
    [Test error structure]
    \label{asm:test-error}
    The error function $t \colon \reals^2 \to \reals$ is pseudo-Lipschitz of order 2. 
    That is, there exists a constant $L > 0$ such that for all $\vu, \vv \in \reals^2$, the following bound holds true:
    $
        |t(\vu) - t(\vv)| \leq L (1 + \norm[2]{\vu} + \smallnorm[2]{\vv}) \smallnorm[2]{\vu - \vv}.
    $
\end{assumption}
The growth condition on $t$ in the assumption above is ultimately tied to our assumptions on the bounded moment of order $4 + \delta$ for some $\delta > 0$ on the entries of the feature vector and the response variable.
By imposing stronger the moment assumptions, one can generalize these results for error functions with higher growth rates at the expense of less data generality.

We remark that this extends the class of functionals previously shown to be consistent for GCV in ridge regression \citep{patil2022estimating}, which were of the residual form $t(y - z)$.
While the tools needed for this extension are not drastically different, it is nonetheless a conceptually important extension. 
In particular, this is useful for classification problems where metrics do not have a residual structure and for adaptive prediction interval construction.
We now state our main consistency result.
\begin{theorem}
    [Functional consistency]
    \label{thm:functional_consistency}
    Under \Cref{asm:data,cond:sketch,asm:test-error},
    for $\lambda > \lambda_0$ and all $K$,
    \begin{equation}
        \label{eq:functional_consistency}
        \widehat{T}(\vhbeta_\lambda^\ens)
        \asympequi
        T(\vhbeta_\lambda^\ens).
    \end{equation}
\end{theorem}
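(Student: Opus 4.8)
The plan is to reduce the functional consistency statement to a statement about the closeness of the empirical distribution $\hP^\ens_\lambda$ of GCV-corrected predictions to the conditional test distribution $P^\ens_\lambda$ in Wasserstein-$2$ metric, and then invoke pseudo-Lipschitz continuity of $t$. More precisely, writing $\hP^\ens_\lambda = \tfrac1n\sum_i \delta\{(y_i, \hz_i)\}$ with $\hz_i$ the GCV-corrected prediction in \eqref{eq:gcv-dual-empirical-dist}, we have $\widehat T(\vhbeta^\ens_\lambda) = \int t\, \mathrm d\hP^\ens_\lambda$ and $T(\vhbeta^\ens_\lambda) = \int t\, \mathrm dP^\ens_\lambda$; Assumption~\ref{asm:test-error} and standard arguments then bound $|\int t\,\mathrm d\hP^\ens_\lambda - \int t\,\mathrm dP^\ens_\lambda|$ by $C\,(1 + \text{second moments})\,W_2(\hP^\ens_\lambda, P^\ens_\lambda)$. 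So it suffices to establish (i) uniform boundedness of the relevant second moments (i.e.\ $\tfrac1n\sum_i (y_i^2 + \hz_i^2)$ and $\EE[y_0^2 + (\vx_0^\transp\vhbeta^\ens_\lambda)^2]$ are bounded), and (ii) $W_2(\hP^\ens_\lambda, P^\ens_\lambda) \to 0$ almost surely.

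The core of the argument is (ii), and the natural route is to relate both distributions to their leave-one-out analogues. First I would pass through LOOCV: for each individual sketched smoother $\mL_\lambda^k$, the classical shortcut formula identifies the $i$-th leave-one-out residual exactly, and the GCV correction in \eqref{eq:gcv-dual-empirical-dist} is obtained by replacing the $i$-dependent leverage score $(\mL_\lambda^k)_{ii}$ with the averaged quantity $\tfrac1n\tr[\mL^\ens_\lambda]$. The first step is thus to show that the leverage scores concentrate: $(\mL_\lambda^k)_{ii} \asympequi \tfrac1n\tr[\mL_\lambda^k]$ uniformly in $i$, which follows from Assumption~\ref{asm:data} (the $\vx_i$ have the separable form $\mSigma^{1/2}\vz_i$ with bounded $4+\delta$ moments) via standard concentration for quadratic forms, together with \Cref{thm:sketched-pseudoinverse} to control the resolvent $(\tfrac1n\mS_k^\transp\mX^\transp\mX\mS_k + \lambda\mI_q)^{-1}$ by an unsketched equivalent. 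Averaging over $k\in[K]$ preserves this since $K$ is fixed. This reduces the GCV-corrected empirical distribution to the leave-one-out empirical distribution up to a $W_2$-vanishing perturbation.

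Next I would show that the leave-one-out empirical distribution $\tfrac1n\sum_i \delta\{(y_i, \vx_i^\transp\vhbeta^{\ens,(i)}_\lambda)\}$ converges in $W_2$ to the test distribution $P^\ens_\lambda$. The key observation is that $(y_i, \vx_i^\transp\vhbeta^{\ens,(i)}_\lambda)$ is distributed (conditionally on the other $n-1$ points and the sketches) exactly as a fresh test pair $(y_0, \vx_0^\transp\vhbeta^{\ens}_\lambda)$ would be if $\vhbeta^\ens$ were fit on the reduced sample — and since removing one of $n$ points perturbs the ensemble estimator negligibly (another resolvent-perturbation / Sherman–Morrison estimate, uniform in $i$ by the leverage-score control above), these leave-one-out pairs behave like $n$ nearly-i.i.d.\ draws from a distribution close to $P^\ens_\lambda$. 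A law-of-large-numbers argument for Wasserstein distance (e.g.\ the empirical measure of weakly-dependent samples with uniformly bounded $2+\epsilon$ moments converges in $W_2$ to its mean) then closes the gap. The uniform moment bounds needed throughout come from Assumption~\ref{asm:data} combined with the operator-norm control on $\mSigma$ and the resolvent bounds from \Cref{thm:sketched-pseudoinverse}; concretely one shows $\|\vhbeta^\ens_\lambda\|_2$ and hence the prediction variances are $O(1)$.

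I expect the main obstacle to be step (ii)'s leave-one-out-to-test comparison done \emph{jointly} in $(y, z)$ rather than just for residuals $y - z$: the prior results of \cite{patil2022estimating} handle $t(y-z)$, where only the residual distribution matters and the GCV correction is a clean scalar rescaling, whereas here we must track the joint law, including the correlation structure between $y_i$ and the corrected prediction $\hz_i$ introduced by the $-\tfrac1n\tr[\mL^\ens_\lambda]\,y_i$ term. Handling the ensemble averaging (the $\mL^\ens_\lambda$ trace mixes leverage contributions across $K$ different sketches, each with its own implicit regularization $\mu$ via \Cref{thm:sketched-pseudoinverse}) adds bookkeeping but, since $K$ is fixed and each sketch is controlled separately, should not pose a fundamental difficulty. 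The negative-$\lambda$ and ridgeless ($\lambda \to 0^+$) regimes are handled as in the earlier sections by working on $\lambda > \lambda_0$ where all resolvents are well-conditioned and appealing to analytic continuation for the boundary case.
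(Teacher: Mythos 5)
Your proposal follows essentially the same route as the paper's proof: pass through exact leave-one-out CV via the shortcut formula $y_i - \vx_i^\transp \vhbeta^k_{-i,\lambda} = (y_i - \vx_i^\transp \vhbeta^k_{\lambda})/(1 - [\mL_\lambda^k]_{ii})$, reduce the GCV-versus-LOO discrepancy to uniform (over $i$ and $k$) concentration of the leverage scores around the normalized trace using the free-sketching equivalence of \Cref{thm:sketched-pseudoinverse} to fall back on the unsketched ridge case, and defer the LOO-to-test comparison to the argument of \cite{patil2022estimating}. The only cosmetic difference is that you phrase the reduction as $W_2$ closeness of the empirical measures and derive functional consistency from distributional consistency, whereas the paper bounds the functional difference directly via the pseudo-Lipschitz property and obtains the $W_2$ statement as \Cref{cor:distributional-consistency}; for order-2 pseudo-Lipschitz test functions these are equivalent.
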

Since $\smash{t(y, z) = (y - z)^2}$ satisfies \Cref{asm:test-error}, this result is strict generalization of \Cref{thm:gcv-consistency}. 
This class of risk functionals is very broad: it includes for example robust risks such as the mean absolute error or Huber loss, and even classification risks such as hinge loss and logistic loss.

Furthermore, this class of error functions is sufficiently rich as to guarantee that not only do risk functionals converge, but in fact the GCV-corrected predictions also converge in distribution to the predictions of test data. 
This simple corollary captures the fact that empirical convergence of pseudo-Lipschitz functionals of order 2, being equivalent to weak convergence plus convergence in second moment, is equivalent to Wasserstein convergence~\citep[Chapter 6]{villani2009optimal}. 

\begin{figure*}[t]
    \centering
    \includegraphics[width=0.99\textwidth]{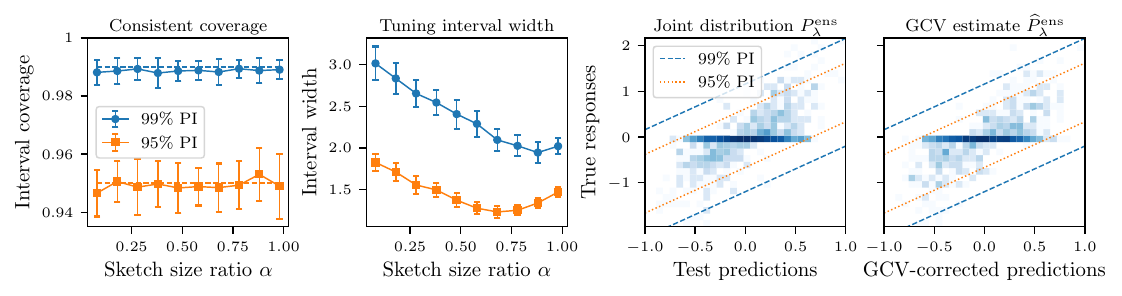}
    \caption{
    \textbf{GCV provides consistent prediction intervals and distribution estimates.}
    \textbf{Left:} 
    We construct GCV prediction intervals for SRDCT ensembles of size $K = 5$ to synthetic data ($n = 1500$, $p=1000$) with nonlinear responses $y = \mathrm{soft\,threshold}(\vx^\transp \vbeta_0)$. 
    \textbf{Mid-left:} We use GCV to tune our model to optimize prediction interval width. 
    \textbf{Right:} The empirical GCV estimate $\hP_\lambda^\ens$ in \eqref{eq:gcv-dual-empirical-dist} (here for $\alpha=0.68$) closely matches the true joint response--prediction distribution $P_\lambda^\ens$. 
    Error bars denote standard deviation over 30 trials.
    }
    \label{fig:confidence-intervals}
\end{figure*}

\begin{corollary}
    [Distributional consistency]
    \label{cor:distributional-consistency}
    Under \Cref{asm:data,cond:sketch},
    for $\lambda > \lambda_0$ and all $K$,
    $\smash{\widehat{P}_{\lambda}^\ens \wtwoconv P_{\lambda}^{\ens}}$, 
    where $\wtwoconv$ denotes convergence in Wasserstein $W_2$ metric.
\end{corollary}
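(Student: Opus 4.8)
The plan is to deduce the corollary from \Cref{thm:functional_consistency} together with the standard characterization of Wasserstein-$2$ convergence as weak convergence plus convergence of second moments \citep[Chapter 6]{villani2009optimal}, exploiting that both bounded-Lipschitz functions and the map $v \mapsto \norm[2]{v}^2$ are pseudo-Lipschitz of order $2$ in the sense of \Cref{asm:test-error}. The only genuinely new ingredient beyond \Cref{thm:functional_consistency} is a uniform higher-moment bound on the two sequences of (random, $n$-dependent) measures $\smash{\widehat P_\lambda^\ens}$ and $\smash{P_\lambda^\ens}$, which is what upgrades functional convergence against each fixed test function to convergence in $W_2$.

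Concretely, I would proceed in three steps. First, by the definitions \eqref{eq:func_def} and \eqref{eq:def-gcv} we have $\int t \,\mathrm{d}\widehat P_\lambda^\ens = \widehat T(\vhbeta_\lambda^\ens)$ and $\int t \,\mathrm{d}P_\lambda^\ens = T(\vhbeta_\lambda^\ens)$ for every error function $t$, so \Cref{thm:functional_consistency} says exactly that $\int t \,\mathrm{d}\widehat P_\lambda^\ens - \int t \,\mathrm{d}P_\lambda^\ens \to 0$ almost surely for each fixed $t$ satisfying \Cref{asm:test-error}. Since every bounded Lipschitz function is pseudo-Lipschitz of order $2$, applying this along a countable dense family yields $d_{\mathrm{BL}}(\widehat P_\lambda^\ens, P_\lambda^\ens) \to 0$ almost surely (asymptotic closeness in the weak topology); and since $v \mapsto \norm[2]{v}^2$ is also pseudo-Lipschitz of order $2$ (as $\big|\norm[2]{u}^2 - \norm[2]{v}^2\big| \le (\norm[2]{u}+\norm[2]{v})\norm[2]{u-v}$), it yields $\int \norm[2]{v}^2 \,\mathrm{d}\widehat P_\lambda^\ens - \int \norm[2]{v}^2 \,\mathrm{d}P_\lambda^\ens \to 0$. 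Second, I would show there is $\delta' > 0$ with $\sup_n \int \norm[2]{v}^{2+\delta'} \,\mathrm{d}\widehat P_\lambda^\ens < \infty$ and $\sup_n \int \norm[2]{v}^{2+\delta'} \,\mathrm{d}P_\lambda^\ens < \infty$ almost surely; this uses the $4+\delta$ moment bounds in \Cref{asm:data} to control $\tfrac1n\sum_i |y_i|^{2+\delta'}$ and the moments of the prediction linear forms $x_i^\transp \vhbeta_\lambda^\ens$ and $x_0^\transp \vhbeta_\lambda^\ens$, together with the facts that $\liminf_n\, (1 - \tfrac1n\tr[\mL_\lambda^\ens]) > 0$, that $\norm[\mathrm{op}]{\mL_\lambda^\ens}$ is bounded, and that $(\vhbeta_\lambda^\ens)^\transp \mSigma \vhbeta_\lambda^\ens$ is bounded, all consequences of the random-matrix control underlying \Cref{thm:sketched-pseudoinverse} for $\lambda > \lambda_0$. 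Third, I would run the standard subsequence argument: on an event of positive probability where $W_2(\widehat P_\lambda^\ens, P_\lambda^\ens) \not\to 0$, pass to a subsequence along which it stays $\ge \eps$; by the uniform $(2+\delta')$-moment bound both subsequences are tight, so along a further subsequence both converge weakly, necessarily to a common limit $P_\infty$ since $d_{\mathrm{BL}} \to 0$; the $(2+\delta')$-moment bound gives uniform integrability of $\norm[2]{v}^2$, hence second moments converge to $\int \norm[2]{v}^2 \,\mathrm{d}P_\infty$; by the cited characterization each subsequence then converges to $P_\infty$ in $W_2$, and the triangle inequality contradicts $W_2 \ge \eps$.

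I expect the main obstacle to be the uniform $(2+\delta')$-moment estimate in the second step, specifically the higher-moment control of the prediction linear forms. For the test form this reduces, via $\vx = \mSigma^{1/2}\vz$, to bounding $\EE[\,|\vz_0^\transp a|^{2+\delta'}\,]$ by $\norm[2]{a}^{2+\delta'}$ for $a = \mSigma^{1/2}\vhbeta_\lambda^\ens$ using the i.i.d.\ bounded-moment structure of $\vz_0$, which is routine once $(\vhbeta_\lambda^\ens)^\transp \mSigma \vhbeta_\lambda^\ens = O(1)$ is established; for the training forms one additionally needs a concentration of $x_i^\transp \vhbeta_\lambda^\ens$ uniform enough in $i$ to survive the empirical average $\tfrac1n\sum_i$, and this is precisely where the slack between the assumed $4+\delta$ moments and the required $2+\delta'$ moments is consumed. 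A secondary subtlety—already absorbed into the subsequence argument above—is that here $W_2$ compares two random, $n$-dependent measures rather than a sequence against a fixed target, so one cannot apply the Villani characterization directly and must route through a common weak limit.
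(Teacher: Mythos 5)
Your route is the same as the paper's: the corollary is deduced from \Cref{thm:functional_consistency} via the Villani Chapter~6 equivalence between $W_2$ convergence and weak convergence plus second-moment convergence, which is exactly the two-sentence citation that constitutes the paper's entire proof. The extra machinery in your Steps 2--3 (the uniform $(2+\delta')$-moment bound and the subsequence argument, needed because both $\widehat{P}_\lambda^\ens$ and $P_\lambda^\ens$ move with $n$ rather than one being a fixed target) is not present in the paper, but it fills a real gap in the one-line argument rather than constituting a different approach.
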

Distributional convergence further enriches our choices of consistent estimators that we can construct with GCV, in that we can now construct estimators of sets and their probabilities.
One example is classification error $\smash{\EE [\ind \{y_0 \neq \mathrm{sign}(\vx_0^\transp \vhbeta_\lambda^\ens)\}]}$, which can be expressed in terms of conditional probability over discrete $y_0$. 
In our real data experiments in \Cref{fig:real-data}, we also compute classification error using GCV and find it yields highly consistent estimates, which is useful as squared error (and hence ridge) is known to be a competitive loss function for classification \citep{hui2021evaluation}.

Of statistical interest, we can also do things such as construct prediction intervals using the GCV-corrected empirical distribution. 
For example, for $\tau \in (0, 1)$, consider the level-$\tau$ quantile $\widehat{Q}(\tau) = \inf\{z : \widehat{F}(z) \geq \tau\}$ and prediction interval 
\begin{align}
\cI = \big[ \vx_0^\transp \vhbeta_\lambda^\ens + \widehat{Q}(\tau_l), \, \vx_0^\transp \vhbeta_\lambda^\ens + \widehat{Q}(\tau_u) \big],
\end{align}
where $\widehat{F}$ is the cumulative distribution function (CDF) of the GCV residuals
$\smash{(y - z) \colon (y, z) \sim \widehat{P}_\lambda^\ens}$. 
Then $\cI$ is a prediction interval for $y_0$ built only from training data that has the right coverage $\tau_u - \tau_l$, conditional on the training data, asymptotically almost surely. 
Furthermore, we can tune our model based on prediction interval metrics such as interval width. 
We demonstrate this idea in the experiment in \Cref{fig:confidence-intervals}.
This idea could be further extended to produce tighter \emph{locally adaptive} prediction intervals by leveraging the entire joint distribution $\hP_\lambda^\ens$ rather than only the residuals.

\section{Tuning applications and theoretical implications}
\label{sec:tuning}

The obvious implication of the consistency results for GCV stated above is that we can also consistently tune sketched ridge regression: for any finite collection of hyperparameters ($\lambda$, $\alpha$, sketching family, $K$) over which we tune, consistency at each individual choice of hyperparameters implies that optimization over the hyperparameter set is also consistent. 
Thus if the predictor that we want to fit to our data is a sketched ridge regression ensemble, direct GCV enables us to efficiently tune it.

However, suppose we have the computational budget to fit a single large predictor, such as unsketched ridge regression or a large ensemble. 
Due to the large cost of refitting, tuning this predictor directly might be unfeasible. 
Fortunately, thanks to the bias--variance decomposition in \Cref{thm:risk-gcv-asymptotics}, we can use small sketched ridge ensembles to tune such large predictors.

The key idea is to recall that asymptotically, the sketched risk is simply a linear combination of the equivalent ridge risk and a variance term, and that we can control the mixing of these terms by choice of the ensemble size $K$. 
This means that by choosing multiple distinct values of $K$, we can solve for the equivalent ridge risk. 
As a concrete example, suppose we have an ensemble of size $K=2$ with corresponding risk $R_2 = R(\vhbeta_\lambda^\ens)$, and let $R_1$ be the risk corresponding to the individual members of the ensemble. 
Then we can eliminate the variance term and obtain the equivalent limiting risk as
\begin{align}
    \label{eq:ensemble-trick}
    R(\vhbeta_\mu^\ridge) \asympequi 2 R_2 - R_1.
\end{align}
Subsequently using the subordination relation
\begin{align}
\mu \asympequi \lambda \Sxf_{\mS \mS^\ctransp}\Big(-\tfrac{1}{p} \tr[\mS^\ctransp \mhSigma \mS (\mS^\ctransp \mhSigma \mS + \lambda \mI_q)^{-1}]\Big)
\end{align}
from \eqref{eq:dual-fp-main} in \Cref{thm:sketched-pseudoinverse}, we can map our choice of $\lambda$ and $\mS$ to the equivalent $\mu$. 
By \Cref{thm:gcv-consistency}, we can use the GCV risk estimators for $R_1$ and $R_2$ and have a consistent estimator for ridge risk at $\mu$. 
In this way, we obtain a consistent estimator of risk that can be computed entirely using only the $q$-dimensional sketched data rather than the full $p$-dimensional data, which can be computed in less time with a smaller memory footprint.
See \Cref{sec:complexity_comparisons} for a detailed comparison of computational complexity.

We demonstrate this ``ensemble trick'' for estimating ridge risk in \Cref{fig:tuning-applications}, which is accurate even where the variance component of sketched ridge risk is large. 
Furthermore, even though GCV is not consistent for sketched observations instead of features (see \Cref{sec:discussion}), the ensemble trick still provides a consistent estimator for ridge risk since the bias term is unchanged. 
One limitation of this method when considering a fixed sketch $\mS$, varying only $\lambda$, is that this limits the minimum value of $\mu$ that can be considered (see discussion by \citealp{lejeune2022asymptotics}). 
A solution to this is to consider varying sketch sizes, allowing the full range of $\mu > 0$, as captured by the following result.
\begin{proposition}
    [Optimized GCV versus optimized ridge]
    \label{cor:ridge-equivalence}
    Under \Cref{asm:data,cond:sketch},
    if $\mS_k^\transp \mS_k$ is invertible, then for any $\mu > 0$, if $\lambda = 0$ and $K \to \infty$,
    \begin{equation}
        \hR(\vhbeta_\lambda^\ens)
        \asympequi
        R(\vhbeta_\lambda^\ens)
        \asympequi
        R(\vhbeta_{\mu}^\ridge)
        \quad\text{for}\quad
        \alpha = \tfrac{1}{p}\tr[\mhSigma (\mhSigma + \mu \mI_p)^{-1}].
    \end{equation}
\end{proposition}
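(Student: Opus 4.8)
The plan is to chain three results already in hand—the free-sketching equivalence (\Cref{thm:sketched-pseudoinverse}), the bias--variance decomposition \eqref{eq:risk-gcv-decomp}, and GCV consistency (\Cref{thm:gcv-consistency})—the only new ingredient being the identification of the implicit ridge level of a \emph{ridgeless} full-rank sketch. First I would show that at $\lambda = 0$ the implicit regularization equals the target $\mu$. Since $\mS_k^\transp \mS_k$ is invertible, $\mS_k$ has full column rank $q$; for $\mu > 0$ the prescribed ratio $\alpha = \tfrac1p \tr[\mhSigma (\mhSigma + \mu \mI_p)^{-1}] = \tfrac1p \sum_{i \,:\, \sigma_i(\mhSigma) > 0} \tfrac{\sigma_i(\mhSigma)}{\sigma_i(\mhSigma) + \mu}$ obeys $q = \alpha p < \operatorname{rank}(\mhSigma)$, so—using \Cref{cond:sketch} to place $\operatorname{range}(\mS_k)$ in general position relative to $\ker(\mhSigma)$—the matrix $\mS_k^\transp \mhSigma \mS_k$ is almost surely nonsingular of rank $q$. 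I would verify that $\lambda_0 \le 0$; in the generic case $\lambda_0 < 0$ (the smallest eigenvalue of $\mS_k^\transp \mhSigma \mS_k$ being bounded away from $0$ uniformly in $p$) the argument proceeds directly, and when $\lambda_0 = 0$ the statement is read via analytic continuation, as elsewhere in the paper.

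Next I would apply \Cref{thm:sketched-pseudoinverse} with the test operator $\mTheta = \tfrac1p \mhSigma$, which has uniformly bounded trace norm under \Cref{asm:data}; the equivalence \eqref{eq:cond-sketch} then yields $\tfrac1p \tr[\mS_k^\transp \mhSigma \mS_k (\mS_k^\transp \mhSigma \mS_k + \lambda \mI_q)^{-1}] \asympequi \tfrac1p \tr[\mhSigma (\mhSigma + \mu(\lambda) \mI_p)^{-1}]$ for $\lambda > \lambda_0$. Letting $\lambda \to 0^+$, the left side converges to $q/p \to \alpha$—the uniform conditioning of $\mS_k^\transp \mhSigma \mS_k$ is what allows this to commute with $p \to \infty$—while $\mu(\lambda)$ decreases (by \Cref{thm:sketched-pseudoinverse}) to some limit $\mu_\star$, so $\tfrac1p \tr[\mhSigma (\mhSigma + \mu_\star \mI_p)^{-1}] = \alpha$. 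Since $s \mapsto \tfrac1p \tr[\mhSigma (\mhSigma + s \mI_p)^{-1}]$ is continuous and strictly decreasing on $(0,\infty)$ with range $(0, \tfrac1p \operatorname{rank}(\mhSigma)) \ni \alpha$, this root is unique; hence $\mu_\star = \mu > 0$ and $\vhbeta^\ridge_{\mu_\star} = \vhbeta_\mu^\ridge$.

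Finally, \eqref{eq:risk-gcv-decomp} at $\lambda = 0$ reads $R(\vhbeta_0^\ens) \asympequi R(\vhbeta_\mu^\ridge) + \mu' \Delta / K$ and $\hR(\vhbeta_0^\ens) \asympequi \hR(\vhbeta_\mu^\ridge) + \mu'' \Delta / K$. Here $\Delta \le \mu^{-2}\, \tfrac1n \|\vy\|_2^2$ is a.s.\ bounded uniformly in $p$ (law of large numbers plus the moment bound on $y_0$ in \Cref{asm:data}) and $\mu', \mu''$ are bounded (depending only on the bounded objects $\Sxf_{\mS\mS^\transp}$, $\mhSigma$, $\mSigma$), so the inflation terms are $O(1/K)$; letting $K \to \infty$ after $p \to \infty$ gives $R(\vhbeta_0^\ens) \asympequi R(\vhbeta_\mu^\ridge)$ and $\hR(\vhbeta_0^\ens) \asympequi \hR(\vhbeta_\mu^\ridge)$. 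Since $\hR(\vhbeta_\mu^\ridge) \asympequi R(\vhbeta_\mu^\ridge)$ by the unsketched-ridge GCV consistency of \citet{patil2021uniform} at $\mu > 0 > -\lambdaminnz(\mhSigma)$ (equivalently \Cref{thm:gcv-consistency} at $\lambda = 0$), chaining the equivalences gives $\hR(\vhbeta_0^\ens) \asympequi R(\vhbeta_0^\ens) \asympequi R(\vhbeta_\mu^\ridge)$, the claim. The delicate step is the second: the $\lambda \to 0^+$ / $p \to \infty$ interchange rests on the uniform conditioning of $\mS_k^\transp \mhSigma \mS_k$, which is exactly where invertibility of $\mS_k^\transp \mS_k$ (forcing $q < \operatorname{rank}(\mhSigma)$ under the chosen $\alpha$) together with freeness enters; the remainder is bookkeeping with the cited results and elementary bounds on $\Delta$, $\mu'$, $\mu''$.
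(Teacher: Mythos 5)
Your proof is correct and reaches the same endgame as the paper (bias--variance decomposition from \Cref{thm:risk-gcv-asymptotics} plus $K \to \infty$ to kill the $\mu'\Delta/K$ term, then GCV consistency to chain in $\hR$), but the central step --- identifying the implicit regularization of the ridgeless sketch --- is done by a genuinely different route. The paper first reduces to the i.i.d.\ Gaussian case by writing $\mS = \mU\mD\mV^\transp$ and observing that $\mS(\mS^\transp\mhSigma\mS)^{-1}\mS^\transp = \mU(\mU^\transp\mhSigma\mU)^{-1}\mU^\transp$ depends only on the column space, then plugs the explicit Marchenko--Pastur S-transform $\Sxf(w)=\alpha/(\alpha+w)$ into the subordination relation and argues that $\lambda\Sxf \to \mu > 0$ with $\lambda \to 0$ forces the pole condition $\alpha + w = 0$, i.e.\ $\alpha = \tfrac1p\tr[\mhSigma(\mhSigma+\mu\mI_p)^{-1}]$. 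You instead test \eqref{eq:cond-sketch} against $\mTheta = \tfrac1p\mhSigma$ and note that at $\lambda=0$ the sketched-side trace $\tfrac1p\tr[\mS^\transp\mhSigma\mS(\mS^\transp\mhSigma\mS)^{-1}]$ is exactly $q/p=\alpha$ whenever $\mS^\transp\mhSigma\mS$ is nonsingular, so matching with the unsketched side and invoking strict monotonicity of $s\mapsto\tfrac1p\tr[\mhSigma(\mhSigma+s\mI_p)^{-1}]$ pins down $\mu$ uniquely. Your version buys two things: it never needs the Gaussian reduction or the closed-form S-transform from \Cref{tab:S-functions}, so it applies verbatim to any free sketch with full column rank; and it makes explicit the well-posedness points the paper glosses over (that $q < \operatorname{rank}(\mhSigma)$ under the prescribed $\alpha$, that $\lambda_0 < 0$ so $\lambda=0$ is admissible, and uniqueness of the root). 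What the paper's route buys in exchange is an explicit connection between the sketch-size condition and the pole of the Marchenko--Pastur S-transform, which is a useful consistency check against the known i.i.d.\ formula. Both arguments are sound; yours is the more self-contained of the two.
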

That is, for any desired level of equivalent regularization $\mu$, we can obtain a sketched ridge regressor with the same bias (equivalently, the same large ensemble risk as $K \to \infty$) by changing only the sketch size and fixing $\lambda = 0$. 
A narrower result was shown for subsampled ensembles by \citet{lejeune2020implicit}, but our generalization provides equivalences for all $\mu > 0$ and holds for any full-rank sketching matrix, establishing that freely sketched predictors indeed cover the same predictive space as their unsketched counterparts.
The result also has practical merit. 
It guarantees that, with a sufficiently large sketched ensemble, we retain the statistical properties of the unsketched ridge regression. 
Thus, practitioners can harness the computational benefits of sketching, such as reduced memory usage and enhanced parallelization capabilities, without a loss in statistical performance.

\begin{figure}[t]
    \centering
    \includegraphics[width=0.99\textwidth]{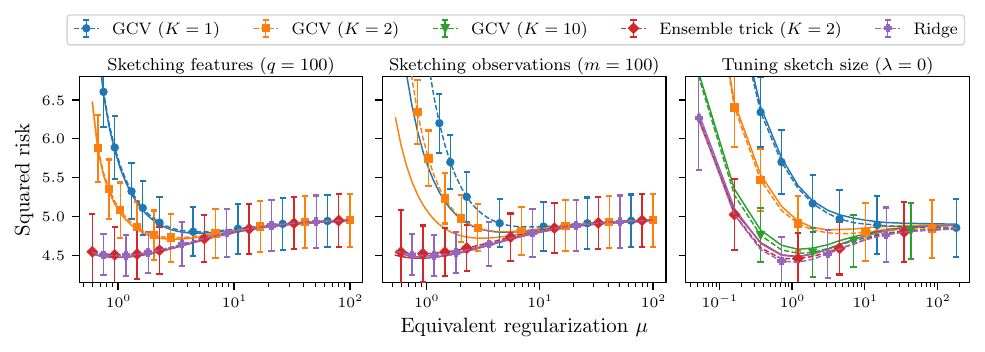}
    \caption{
    \textbf{GCV combined with sketching yields a fast method for tuning ridge.} 
    We fit SRDCT ensembles on synthetic data ($n = 600$, $p = 800$), sketching features (\textbf{left} and \textbf{right}) or observations (\textbf{middle}).
    GCV (dashed) provides consistent estimates of test risk (solid) for feature sketching but not for observation sketching.
    However, the ensemble trick in \eqref{eq:ensemble-trick} does not depend on the variance and thus works for both.
    For $\lambda = 0$, each equivalent $\mu > 0$ can be achieved by an appropriate choice of $\alpha$.
    Error bars denote standard deviation over 50 trials.
    }
    \label{fig:tuning-applications}
\end{figure}

\section{Discussion}
\label{sec:discussion}

This paper establishes the consistency of GCV-based estimators of risk functionals.
We show that GCV provides a method for consistent fast tuning of sketched ridge ensemble parameters.
However, taking a step back, given the connection between the sketched pseudoinverse and implicit ridge regularization in the unsketched inverse (\Cref{cond:sketch}) and the fact that GCV ``works'' for ridge regression \citep{patil2021uniform,wei_hu_steinhardt}, one might wonder if the results in this paper were ``expected''?
The introduction of the ensemble required additional analysis of course, but perhaps the results seem intuitively natural.

Surprisingly (even to the authors), if one changes the strategy from sketching features to sketching observations, we no longer have GCV consistency for finite ensembles! 
Consider a formulation where we now sketch observations with $K$ independent sketching matrices $\mT_k \in \RR^{n \times m}$ for $k \in [K]$.
We denote the $k$-th observation sketched ridge estimator at regularization level $\lambda$ as: 
\begin{equation}
    \label{eq:primal-sketch-estimator}
    \widetilde{\vbeta}_\lambda^k \defeq \argmin_{\vbeta \in \RR^{p}} \tfrac{1}{n} \big\Vert \mT_k^\ctransp (\vy - \mX \vbeta) \big\Vert_2^2 + \lambda \norm[2]{\vbeta}^2.
\end{equation}
Note the solution \eqref{eq:primal-sketch-estimator} is already in the feature space $\RR^p$.
As with feature sketch, the estimator $\widetilde{\bbeta}_\lambda^k$ admits a closed-form expression displayed below in \eqref{eq:ensemble-estimator-primal}.
Let the final ensemble estimator $\widetilde{\vbeta}^\ens_\lambda$ be defined analogously to \eqref{eq:ensemble-estimator} as a simple unweighted average of the $K$ component sketched estimators:
\begin{equation}
    \label{eq:ensemble-estimator-primal}
    \widetilde{\vbeta}^{\ens}_\lambda
    = \frac{1}{K} \sum_{k=1}^K \widetilde{\vbeta}^{k}_\lambda,
    \quad
    \text{where}
    \quad
    \widetilde{\bbeta}_\lambda^k
    = \tfrac{1}{n} \big( \tfrac{1}{n} \mX^\ctransp \mT_k \mT_k^\ctransp \mX + \lambda \mI_p \big)^{-1} \mX^\ctransp \mT_k \mT_k^\ctransp \vy.
\end{equation}
Note again that the ensemble estimator $\smash{\widetilde{\vbeta}_{\lambda}^{\ens}}$ is a linear smoother with the smoothing matrix:
\[
\widetilde{\mL}_{\lambda}^{\ens} = \frac{1}{K} \sum_{k = 1}^K \widetilde{\mL}_{\lambda}^k,
\quad
\text{where}
\quad
\widetilde{\mL}_{\lambda}^{k} = \tfrac{1}{n} \mX (\tfrac{1}{n} \mX^\transp \mT_k \mT_k^\transp \mX + \lambda \mI_p)^{-1} \mX^\transp \mT_{k} \mT_{k}^{\transp}.
\]
We can then define the GCV estimator ${\widetilde{R}(\widetilde{\bbeta}^\ens_\lambda)}$ of the squared risk in a similar fashion to \eqref{eq:def-gcv}:
\begin{equation}
    \label{eq:def-gcv-primal}
    \widetilde{R}(\widetilde{\vbeta}^\ens_\lambda)
    =  
    \frac
    {\tfrac{1}{n} \| \vy - \mX \widetilde{\vbeta}^\ens_\lambda \|_2^2}
    {( 1 - \tfrac{1}{n} \tr[\widetilde{\mL}^\ens_\lambda] )^2}.
\end{equation}
The following result shows that $\widetilde{R}(\widetilde{\vbeta}^\ens_\lambda)$ is \emph{inconsistent} for any $K$.
In preparation for the forthcoming statement, define $\widetilde{\lambda}_0 = - \liminf_{p \to \infty} \min_{k \in [K]} \lambdaminnz(\tfrac{1}{n} \mX^\top \mT_k \mT_k^\top \bX)$.
\begin{proposition}
    [GCV inconsistency for observation sketch]
    \label{prop:gcv-primal-fails}
    Suppose \Cref{cond:sketch} holds for $\mT \mT^\top$, and that the operator norm of $\mSigma$ and second moment of $y_0$ are uniformly bounded in $p$.
    Then, for $\lambda > \widetilde{\lambda}_0$ and all $K$,
    \begin{gather}
        \label{eq:primal-asymptotics-decompositions}
        R \bigparen{\widetilde{\vbeta}_{\lambda}^\ens}
        \asympequi
        R \bigparen{ \widetilde{\vbeta}_{\nu}^\ridge} + \frac{\nu' \widetilde{\Delta}}{K}
        \quad
        \text{and}
        \quad
        \widetilde{R} \bigparen{\widetilde{\vbeta}_{\lambda}^\ens}
        \asympequi
        \widetilde{R} \bigparen{ \widetilde{\vbeta}_{\nu}^\ridge} + \frac{\nu'' \widetilde{\Delta}}{K}, 
    \end{gather}
    where $\nu > - \lambdaminnz(\tfrac{1}{n} \mX \mX^\top)$ is increasing in $\lambda > \widetilde{\lambda}_0$ and satisfies 
    \begin{equation}
    \label{eq:mu-main-primal}
    \nu = \lambda  \Sxf_{\mT \mT^\transp}\big(-\tfrac{1}{n} \tr\big[\tfrac{1}{n} \mX \mX^\transp \biginv{\tfrac{1}{n} \mX \mX^\transp + \nu \mI_n}\big]\big),
    \end{equation}
    $\widetilde{\Delta} = \tfrac{1}{n} \vy^\transp (\tfrac{1}{n} \mX \mX^\transp + \nu \bI_n)^{-2} \geq 0$,
    and $\nu' \ge 0$ is a certain non-negative inflation factor in the risk that only depends on $\Sxf_{\mT \mT^\top}$, $\tfrac{1}{n} \mX \mX^\top$, and $\mSigma$, while $\nu'' \ge 0$ is a certain non-negative inflation factor in the risk estimator that only depends on $\Sxf_{\mT \mT^\top}$ and $\tfrac{1}{n} \mX \mX^\top$.
    Furthermore, under \Cref{asm:data},
    in general we have $\nu' \not \asympequi \nu''$, and therefore
    $\smash{\widetilde{R}(\widetilde{\vbeta}_{\lambda}^{\ens})  \not \asympequi R(\widetilde{\vbeta}_{\lambda}^{\ens})}$.
\end{proposition}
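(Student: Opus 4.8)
\emph{Setup and the decompositions.} The plan is to run the argument behind \Cref{thm:risk-gcv-asymptotics} in the ``observation domain''. First I would rewrite the per-member smoother by a push-through identity: with $\mW_k=\mT_k\mT_k^\transp$ one has $\widetilde\mL_\lambda^k=\tfrac1n\mX\mX^\transp\,\mT_k\bigl(\mT_k^\transp\tfrac1n\mX\mX^\transp\mT_k+\lambda\mI_m\bigr)^{-1}\mT_k^\transp$, and likewise $\widetilde\vbeta_\lambda^k=\tfrac1n\mX^\transp\mT_k\bigl(\mT_k^\transp\tfrac1n\mX\mX^\transp\mT_k+\lambda\mI_m\bigr)^{-1}\mT_k^\transp\vy$. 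Since \Cref{cond:sketch} is assumed for $\mT\mT^\transp$ paired with $\tfrac1n\mX\mX^\transp$, \Cref{thm:sketched-pseudoinverse} applies with $(\mS,\mhSigma,q)$ replaced by $(\mT_k,\tfrac1n\mX\mX^\transp,m)$, so $\mT_k(\mT_k^\transp\tfrac1n\mX\mX^\transp\mT_k+\lambda\mI_m)^{-1}\mT_k^\transp\asympequi(\tfrac1n\mX\mX^\transp+\nu\mI_n)^{-1}$ with $\nu$ solving \eqref{eq:mu-main-primal}; hence each member is first-order equivalent to unsketched ridge at level $\nu$, i.e.\ $\widetilde\mL_\lambda^k\asympequi\tfrac1n\mX\mX^\transp(\tfrac1n\mX\mX^\transp+\nu\mI_n)^{-1}=:\overline\mL$ and (in first moment over $\mT_k$) $\widetilde\vbeta_\lambda^k\asympequi\widehat\vbeta_\nu^\ridge$, the unsketched ridge estimator at level $\nu$. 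To produce the $1/K$ terms I would write $\widetilde\vbeta_\lambda^k=\widehat\vbeta_\nu^\ridge+\bigl(\widetilde\vbeta_\lambda^k-\widehat\vbeta_\nu^\ridge\bigr)$, expand $R(\widetilde\vbeta_\lambda^\ens)=\EE_{\vx_0,y_0}[(y_0-\vx_0^\transp\widetilde\vbeta_\lambda^\ens)^2]$ and the GCV numerator $\tfrac1n\|\vy-\widetilde\mL_\lambda^\ens\vy\|_2^2$ into a ``mean'' part plus double sums over $k,k'$ of fluctuation products, kill the $k\neq k'$ terms by independence of the $\mT_k$ (each fluctuation being asymptotically centered), and collect the $K$ surviving diagonal terms into a $1/K$ contribution. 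The denominator $(1-\tfrac1n\tr\widetilde\mL_\lambda^\ens)^2$ contributes nothing at order $1/K$ since $\tfrac1n\tr\widetilde\mL_\lambda^\ens=\tfrac1K\sum_k\tfrac1n\tr\widetilde\mL_\lambda^k$ and each trace self-averages to $\tfrac1n\tr\overline\mL$ regardless of $K$. One then verifies that the diagonal risk term, $\|\mSigma^{1/2}(\widetilde\vbeta_\lambda^1-\widehat\vbeta_\nu^\ridge)\|_2^2$ asymptotically, factors as $\nu'\widetilde\Delta$ with $\nu'$ a function of $\Sxf_{\mT\mT^\transp}$, $\tfrac1n\mX\mX^\transp$ and $\mSigma$, while the diagonal GCV term, $\tfrac1n\|(\widetilde\mL_\lambda^1-\overline\mL)\vy\|_2^2/(1-\tfrac1n\tr\overline\mL)^2$ asymptotically, factors as $\nu''\widetilde\Delta$ with $\nu''$ a function of $\Sxf_{\mT\mT^\transp}$ and $\tfrac1n\mX\mX^\transp$ only — $\mSigma$ cannot enter because the GCV numerator and denominator are functions of $\mX,\vy,(\mT_k)$ alone. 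Establishing these two factorizations through the common $\widetilde\Delta$ is the technically heaviest part; it is exactly the second-order / deterministic-equivalent computation already carried out for \Cref{thm:risk-gcv-asymptotics}, which I would reuse verbatim in the transposed setting.

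\emph{Reduction of the inconsistency to $\nu'\not\asympequi\nu''$.} By \eqref{eq:primal-asymptotics-decompositions}, $\widetilde R(\widetilde\vbeta_\lambda^\ens)-R(\widetilde\vbeta_\lambda^\ens)\asympequi\bigl[\widetilde R(\widehat\vbeta_\nu^\ridge)-R(\widehat\vbeta_\nu^\ridge)\bigr]+(\nu''-\nu')\widetilde\Delta/K$. Under \Cref{asm:data}, GCV for ordinary ridge is consistent \citep{patil2021uniform}, so the bracketed difference is $\asympequi 0$; since $\widetilde\Delta>0$ and $K$ is finite, it remains to show $\nu'\not\asympequi\nu''$. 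Here the crucial observation is that both $\widetilde\vbeta_\lambda^1$ and $\widehat\vbeta_\nu^\ridge$ lie in the row space of $\mX$, so writing $\widetilde\vbeta_\lambda^1-\widehat\vbeta_\nu^\ridge=\tfrac1n\mX^\transp\vw$ with $\vw=\bigl[\mT_1(\mT_1^\transp\tfrac1n\mX\mX^\transp\mT_1+\lambda\mI_m)^{-1}\mT_1^\transp-(\tfrac1n\mX\mX^\transp+\nu\mI_n)^{-1}\bigr]\vy\in\RR^n$, the risk fluctuation equals $\tfrac1n\vw^\transp(\tfrac1n\mX\mSigma\mX^\transp)\vw$ whereas the GCV fluctuation equals $\tfrac1n\vw^\transp(\tfrac1n\mX\mX^\transp)^2\vw/(1-\tfrac1n\tr\overline\mL)^2$. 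Under \Cref{asm:data} with $\mX=\mZ\mSigma^{1/2}$ these involve $\tfrac1n\mZ\mSigma^2\mZ^\transp$ versus $(\tfrac1n\mZ\mSigma\mZ^\transp)^2$, genuinely different spectral objects unless $\mSigma\propto\mI_p$, and even then carrying different powers of the Gram. I would then take the asymptotic expectation over $\mT_1$ using the second-order freeness covariance of the resolvent $\mT_1(\cdot)\mT_1^\transp$, substitute the Marchenko--Pastur fixed point relating $\mSigma$ to $\tfrac1n\mX^\transp\mX$, and evaluate $\nu'$ and $\nu''$ in one tractable family — say a two-point covariance spectrum, a Gaussian (or random orthogonal) sketch, and generic ratios $p/n$, $m/n$ — exhibiting a configuration with $\nu'\neq\nu''$. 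Because the statement only claims failure ``in general'', a single such instance suffices.

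\emph{Why feature sketching is different, and the main obstacle.} The contrast illuminates the mechanism: for feature sketching the per-member smoother $\mL_\lambda^k=\tfrac1n\mX\mS_k(\cdot)\mS_k^\transp\mX^\transp$ is symmetric positive semidefinite and the implicit ridge $(\mhSigma+\mu\mI_p)^{-1}$ acts in the same $p$-dimensional space as the risk weighting $\mSigma$, so the extra factor of $\mhSigma$ that the GCV numerator carries relative to the risk is exactly absorbed by the $(1-\tfrac1n\tr\mL_\lambda^\ens)^2$ correction, and under \Cref{asm:data} the residual $\mSigma$-dependence collapses through the same fixed point — this is the content of \Cref{thm:gcv-consistency}. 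For observation sketching $\widetilde\mL_\lambda^k=\tfrac1n\mX\mX^\transp\,\mT_k(\cdot)\mT_k^\transp$ is not symmetric, the implicit ridge lives in the $n$-dimensional observation space while the variance inflation remains governed by the $p$-dimensional $\mSigma$, and this mismatch cannot be repaired by a scalar degrees-of-freedom correction. The main obstacle is precisely the fluctuation analysis of the first step: rigorously showing that the diagonal risk and GCV terms each factor through the single quadratic form $\widetilde\Delta$ with deterministic prefactors and $K$-dependence exactly $1/K$ requires the second-order free-probability / resolvent-covariance machinery; once $\nu'$ and $\nu''$ are in hand, certifying $\nu'\neq\nu''$ is a finite computation whose only delicate point is choosing the instance so that the inequality is genuine rather than an artifact of a degenerate configuration.
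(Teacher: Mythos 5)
Your proposal for the two decompositions in \eqref{eq:primal-asymptotics-decompositions} is essentially the paper's own route: transpose via the push-through/Woodbury identity so that \Cref{thm:sketched-pseudoinverse} applies with $(\mT_k,\tfrac1n\mX\mX^\transp,m)$ in place of $(\mS,\mhSigma,q)$, kill the $k\neq\ell$ cross terms by independence, and reduce the surviving $1/K$ diagonal term to a second-order resolvent equivalence (the paper packages this as its observation-sketch analogues of the first- and second-order lemmas plus a quadratic-form decomposition lemma). You also correctly identify why $\mSigma$ enters $\nu'$ but not $\nu''$, and correctly reduce the inconsistency claim to $\nu'\not\asympequi\nu''$ via the known consistency of GCV for unsketched ridge.

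Where you genuinely diverge is the last step. You propose to certify $\nu'\neq\nu''$ by computing both inflation factors in one tractable instance (specific spectrum, Gaussian sketch, generic aspect ratios), which is a legitimate strategy for an ``in general'' claim. The paper instead proves the mismatch generically in closed form: writing both sides with Woodbury in the $p$-dimensional space, it reuses the feature-sketch matching identity (its Lemma on the equivalence of $(\bhSigma+\nu\bI_p)^{-1}\mSigma(\bhSigma+\nu\bI_p)^{-1}$ with the GCV-normalized empirical version) to cancel the second-order parts of the two expressions exactly, and what survives is
\begin{equation*}
  -\left(1-\frac{\tfrac1n\tr[\bhSigma(\bhSigma+\nu\bI_p)^{-1}]}{1-\tfrac1n\tr[\bhSigma(\bhSigma+\nu\bI_p)^{-1}]}\right)^2,
\end{equation*}
which is nonzero unless the normalized degrees of freedom equal exactly $1/2$. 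This buys an exact characterization of when observation-sketch GCV happens to be consistent, which an instance computation would not reveal. One caution about your route: your heuristic that $\tfrac1n\mX\mSigma\mX^\transp$ and $(\tfrac1n\mX\mX^\transp)^2$ are ``genuinely different spectral objects'' is not by itself an argument --- the feature-sketch case exhibits the same apparent mismatch between $\mSigma$ and $\bhSigma$ and yet the GCV denominator cancels it exactly (that is precisely \Cref{thm:gcv-consistency}). So the instance computation you defer must actually be carried out, and care is needed to confirm that the chosen configuration does not accidentally land on the degenerate locus where the mismatch vanishes.
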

\Cref{prop:gcv-primal-fails} is dual analogue of \Cref{thm:risk-gcv-asymptotics}.
For precise expressions of $\nu'$ and $\nu''$, we defer readers to \Cref{prop:gcv-primal-fails-appendix} in \Cref{app:proofs-sec:discussion}.
Note that as $K \to \infty$, the variance terms in \eqref{eq:primal-asymptotics-decompositions} vanish and we get back consistency; for this reason, the ``ensemble trick'' in \eqref{eq:ensemble-trick} still works.
This negative result highlights the subtleties in the results in this paper, and that the GCV consistency for sketched ensembles of finite $K$ is far from obvious and needs careful analysis to check whether it is consistent. 
This result is similar in spirit to the GCV inconsistency results of \cite{bellec2023corrected} and \cite{patil2024failures} in subsampling and early stopping contexts, respectively.
It is still possible to correct GCV in our case, as we detail in \Cref{sec:primal-correction-ensemble-trick}, but it requires the use of the unsketched data as well.

While our results are quite general in terms of being applicable to a wide variety of data and sketches, they are limited in that they apply only to ridge regression with isotropic regularization. 
However, we believe that the tools used in this work are useful in extending GCV consistency and the understanding of sketching to many other linear learning settings.

It is straightforward to extend our results beyond isotropic ridge regularization. 
We might want to apply generalized anisotropic ridge regularization in real-world scenarios: generalized ridge achieves Bayes-optimal regression when the ground truth coefficients in a linear model come from an anisotropic prior. 
We can cover this case with a simple extension of our results; see \Cref{sec:generalized-ridge}.

Going beyond ridge regression, we anticipate that GCV for sketched ensembles should also be consistent for generalized linear models with arbitrary convex regularizers, as was recently shown in the unsketched setting for Gaussian data~\citep{bellec2020out}. The key difficulty in applying the analysis based on \Cref{thm:sketched-pseudoinverse} to the general setting is that we can only characterize the effect of sketching as additional ridge regularization. 
One promising path forward is via viewing the optimization as iteratively reweighted least squares (IRLS).
On the regularization side, IRLS can achieve many types of structure-promoting regularizers (see \citealp{lejeune2021flipside} and references therein) via successive generalized ridge, and so we might expect GCV to also be consistent in this case. 
Furthermore, for general training losses, we believe that GCV can be extended appropriately to handle reweighting of observations and leverage the classical connection between IRLS and maximum likelihood estimation in generalized linear models. 
Furthermore, to slightly relax data assumptions, we can extend GCV to the closely related approximate leave-one-out (ALO) risk estimation~\citep{xu_maleki_rad_2019,rad_maleki_2020}, which relies on fewer concentration assumptions for consistency.

\section*{Acknowledgements}

We are grateful to Ryan J.\ Tibshirani for helpful feedback on this work.
We warmly thank Benson Au, Roland Speicher, Dimitri Shlyakhtenko for insightful discussions related to free probability theory and infinitesimal freeness.
We also warmly thank Arun Kumar Kuchibhotla, Alessandro Rinaldo, Yuting Wei, Jin-Hong Du, Alex Wei for many useful discussions regrading the ``dual'' aspects of observation subsampling in the context of risk monotonization.
As is the nature of direction reversing and side flipping dualities in general, the insights and perspectives gained from that observation side are naturally ``mirrored'' and ``transposed'' onto this feature side (with some important caveats)!
Finally, we sincerely thank the anonymous reviewers for their insightful and constructive feedback that improved the manuscript, particularly with the addition of \Cref{sec:complexity_comparisons}.

This collaboration was partially supported by Office of Naval Research MURI grant N00014-20-1-2787. 
DL was supported by Army Research Office grant 2003514594.

\bibliographystyle{unsrtnat}

\newpage
\appendix

\def\titleRLB{\removelinebreaks{\papertitle}}

\newgeometry{left=0.5in,top=0.5in,right=0.5in,bottom=0.3in,head=.1in,foot=0.1in}

\newpage
\begin{center}
{
\Large
\bf
\framebox{Supplement}
}
\end{center}

This serves as a supplement to the paper ``\titleRLB.''
Below we first provide an outline for the supplement in \Cref{tab:roadmap-supplement}.
Then we list some of the general and specific notations used throughout the main paper and the supplement in \Cref{tab:general-notation,tab:specific-notation}, respectively. 

\subsection*{Outline}

\begin{table}[!ht]
\centering
\begin{tabularx}{\textwidth}{L{2cm}L{15cm}}
\toprule
\textbf{Appendix} & \textbf{Content} \\
\midrule
\Cref{sec:background-freesketching} 
& Background on asymptotic freeness and empirical support for sketching freeness \\
\arrayrulecolor{black!25}
\addlinespace[0.5ex]
\midrule
\Cref{app:useful-asymp-equi} 
& Asymptotic equivalents for freely sketched resolvents used in the proofs throughout \\
\addlinespace[0.5ex]
\midrule
\Cref{app:proofs-sec:sqaured-risk} 
& Proofs of \Cref{thm:risk-gcv-asymptotics} and \Cref{thm:gcv-consistency} (from \Cref{sec:squared-risk}) \\
\addlinespace[0.5ex]
\midrule
\Cref{app:proofs-sec:functional-consistency} 
& Proofs of \Cref{thm:functional_consistency} and \Cref{cor:distributional-consistency} (from \Cref{sec:functional-consistency}) \\
\addlinespace[0.5ex]
\midrule
\Cref{app:proofs-sec:tuning} 
& Proof of \Cref{cor:ridge-equivalence} (from \Cref{sec:tuning}) \\
\addlinespace[0.5ex]
\midrule
\Cref{app:proofs-sec:discussion} 
& Proof of \Cref{prop:gcv-primal-fails} and statements and other details for anisotropic sketching, generalized ridge regression, and observation sketch (from \Cref{sec:discussion}) \\
\addlinespace[0.5ex]
\midrule
\Cref{sec:experimental-details} 
& Additional experimental illustrations and setup details for \Cref{fig:gcv_paths,fig:real-data,fig:confidence-intervals,fig:tuning-applications} \\
\addlinespace[0.5ex]
\arrayrulecolor{black}\bottomrule
\end{tabularx}
\caption{Roadmap of the supplement.}
\label{tab:roadmap-supplement}
\end{table}

\subsection*{General notation}

\begin{table}[!ht]
\centering
\begin{tabularx}{\textwidth}{L{3cm}L{15cm}}
\toprule
\textbf{Notation} & \textbf{Description} \\
\midrule
Non-bold & Denotes scalars, functions, distributions etc. (e.g., $k$, $f$, $P$) \\
Lowercase bold& Denotes vectors (e.g., $\bx$, $\by$, $\vbeta$) \\
Uppercase bold & Denotes matrices (e.g., $\bX$, $\bS$, $\bSigma$) \\
\addlinespace[0.5ex]
\arrayrulecolor{black!25}\midrule
$\RR$, $\RR_{\ge 0}$ & Set of real and non-negative real numbers \\
$\CC$, $\CC^{+}$, $\CC^{-}$ & Set of complex numbers, and upper and lower complex half-planes \\
$[n]$ & Set $\{1, \dots, n\}$ for a natural number $n$ \\
$\ind\{A\}$ & Indicator random variable associated with an event $A$ \\
\addlinespace[0.5ex]
\arrayrulecolor{black!25}\midrule
$\| \vu \|_{p}$, $\|f\|_{L_p}$ & The $\ell_p$ norm of a vector $\vu$ and the $L_p$ norm of a function $f$ for $p \ge 1$ \\
$\| \bX \|_{\mathrm{op}}$, $\| \bX \|_{\mathrm{tr}}$ & Operator (or spectral) and trace (or nuclear) norm of a rectangular matrix $\bX \in \RR^{n \times p}$ \\
\addlinespace[0.5ex]
\arrayrulecolor{black!25}\midrule
$\tr[\bA]$, $\bA^{-1}$ & Trace and inverse (if invertible) of a square matrix $\bA \in \RR^{p \times p}$ \\
$\mathrm{rank}(\bB)$, $\bB^\top$, $\bB^{\dagger}$ & Rank, transpose and Moore-Penrose inverse of a rectangular matrix $\bB \in \RR^{n \times p}$ \\
$\bC^{1/2}$ & Principal square root of a positive semidefinite matrix $\bC \in \RR^{p \times p}$ \\
$\bI_n$ or $\bI$ & The $n \times n$ identity matrix \\
\addlinespace[0.5ex]
\arrayrulecolor{black!25}\midrule
$\cO$, $o$ & Deterministic big-O and little-o notation \\
$\bu \le \bv$ & Lexicographic ordering for real vectors $\bu$ and $\bv$ \\
$\bA \preceq \bB$ & Loewner ordering for symmetric matrices $\bA$ and $\bB$ \\
\addlinespace[0.5ex]
\midrule
$\cO_p$, $o_p$ & Probabilistic big-O and little-o notation \\
$\bA \asympequi \bB$ & Asymptotic equivalence of matrices $\bA$ and $\bB$ (see \Cref{app:useful-asymp-equi} for details) \\
$\asto$, $\pto$, $\dto$ & Almost sure convergence, convergence in probability, and weak convergence \\
$\wtwoconv$ & Convergence in Wasserstein $W_2$ metric \\
\addlinespace[0.5ex]
\arrayrulecolor{black}\bottomrule
\end{tabularx}
\caption{Summary of the general notation used throughout the paper and the supplement.}
\label{tab:general-notation}
\end{table}

\clearpage
\subsection*{Specific notation}

\begin{table}[!ht]
    \centering
    \begin{tabularx}{\textwidth}{L{1.8cm} L{16cm}}
        \toprule
        \textbf{Symbol} & \textbf{Meaning} \\
        \midrule
        $((\vx_i, y_i))_{i=1}^n$ & Train dataset containing $n$ i.i.d.\ observations in $\RR^{p} \times \RR$ \\
        ($\mX$, $\vy$) & Train data matrix $(\vx_1, \ldots, \vx_n)^\transp$ in $\reals^{n \times p}$ and response vector $\vy = (y_1, \ldots, y_n)$ in $\reals^{n}$ \\
        $(\bx_0, y_0)$ & Test point in $\RR^{p} \times \RR$ drawn independently from the train data distribution \\ 
        $\mSigma$ & Population covariance matrix in $\RR^{p \times p}$: $\EE[\bx_0 \bx_0^\top]$ \\
        $\bbeta_0$ & Coefficients of population linear projection of $y_0$ onto $\bx_0$ in $\RR^{p}$: $\mSigma^{-1} \EE[\bx_0 y_0]$ \\
        $\mhSigma$ & Sample covariance matrix in $\RR^{p \times p}$: $\tfrac{1}{n} \mX^\transp \mX$ \\
        $\widehat{\vbeta}_\lambda^\ridge$ & Ridge estimator on full data $(\bX, \by)$ at regularization level $\lambda$: $(\mhSigma + \lambda \bI_p)^{-1} \tfrac{1}{n} \bX^\top \by$ \\ 
        $K$ & Ensemble size \\
        \addlinespace[0.5ex]
        \arrayrulecolor{black!25}
        \midrule
        $( \mS_k )_{k = 1}^{K}$ & Sketching matrices in $\RR^{p \times q}$ (for feature sketch) \\
        $\alpha$ & Sketching aspect ratio $\alpha = \frac{q}{p}$ \\
        $\widehat{\vbeta}^{\mS_k}_\lambda$ & $k$-th component estimator in the sketched ensemble in $\RR^{q}$ (sketch space) at regularization level $\lambda$ \\
        $\widehat{\vbeta}_\lambda^k$ & $k$-th component estimator in the sketched ensemble in $\RR^{p}$ (feature space) \\
        $\mL^k_\lambda$ & Smoothing matrix of the $k$-th component estimator in the sketched ensemble in $\RR^{n \times n}$ \\
        $\widehat{\vbeta}^{\mathrm{ens}}_\lambda$ & Final sketched ensemble estimator in $\RR^{p}$: $\tfrac{1}{K} \sum_{k=1}^K \widehat{\vbeta}^k_\lambda$ \\
        $\mL^\ens_\lambda$ & Smoothing matrix of the sketched ensemble estimator in $\RR^{n \times n}$: $\tfrac{1}{K} \sum_{k=1}^{K} \mL^k_\lambda$ \\
        \addlinespace[0.5ex]
        \arrayrulecolor{black!25}
        \midrule
        $P^\ens_\lambda$ & Joint distribution of test response and test predicted values of the sketched ensemble estimator (for feature sketch) at regularization level $\lambda$ \\ 
        $R(\widehat{\vbeta}_{\lambda}^\ens)$ & Squared risk of the sketched ensemble estimator \\
        $T(\widehat{\vbeta}_\lambda^\ens)$ & General linear risk functional of the sketched ensemble estimator \\
        \addlinespace[0.5ex]
        \arrayrulecolor{black!25}
        \midrule
        $\widehat{P}^\ens_\lambda$ & Estimated joint distribution of test response and test predicted values of the sketched ensemble (for feature sketch) at regularization level $\lambda$ using GCV residuals \\
        $\hR(\widehat{\vbeta}^\ens_\lambda)$ & Estimated squared risk of the sketched ensemble estimator \\
        $\widehat{T}(\widehat{\vbeta}^\ens_{\lambda})$ & Estimated general linear functional of the sketched ensemble estimator \\
        \addlinespace[0.5ex]
        \arrayrulecolor{black!25}
        \midrule
        $\mu$ & Effective induced regularization level of the sketched ensemble estimator (for feature sketch) with original regularization level $\lambda$ \\
        $\mu'$ & Inflation factor in the squared risk decomposition of the sketched ensemble estimator \\
        $\mu''$ & Inflation factor in the GCV decomposition for the sketched ensemble estimator \\
        \addlinespace[0.5ex]
        \arrayrulecolor{black!25}
        \midrule
        $( \mT_k )_{k=1}^{K}$ & Sketching matrices $\reals^{n \times m}$ (for observation sketch) \\
        $\eta$ & Sketching aspect ratio $\eta = \frac{m}{n}$ \\
        $\widetilde{\vbeta}^k_\lambda$ & $k$-th component estimator in the sketched ensemble in $\RR^p$ (feature space) at regularization level $\lambda$ \\
        $\widetilde{\mL}^k_\lambda$ & Smoothing matrix of the $k$-th component estimator in the sketched ensemble in $\RR^{n \times n}$ \\
        $\widetilde{\vbeta}^\ens_\lambda$ & Final sketched ensemble estimator in $\RR^{p}$: $\frac{1}{K} \sum_{k=1}^K \widetilde{\vbeta}^k_\lambda$ \\
        $\widetilde{\mL}^\ens_\lambda$ & Smoothing matrix of the sketched ensemble estimator in $\RR^{n \times n}$: $\tfrac{1}{K} \sum_{k=1}^{K} \widetilde{\mL}^k_\lambda$ \\
        \addlinespace[0.5ex]
        \arrayrulecolor{black!25}
        \midrule
        $R(\widetilde{\vbeta}^\ens_\lambda)$ & Squared risk of the sketched ensemble estimator (for observation sketch) at regularization level $\lambda$ \\
        $\widetilde{R}(\widetilde{\vbeta}^\ens_\lambda)$ & Estimated squared risk of the sketched ensemble estimator using GCV \\
        \addlinespace[0.5ex]
        \arrayrulecolor{black!25}
        \midrule        
        $\nu$ & Effective induced regularization level of the sketched ensemble estimator (for observation sketch) with original regularization level $\lambda$ \\
        $\nu'$ & Inflation factor in the squared risk decomposition of the sketched ensemble estimator \\
        $\nu''$ & Inflation factor in the GCV decomposition for the sketched ensemble estimator \\
        \addlinespace[0.5ex]
        \arrayrulecolor{black!25}
        \midrule
        $\Sxf_{\mS \mS^\transp}$ & S-transform of the spectrum of $\mS \mS^\top \in \RR^{p \times p}$ (for feature sketch) \\
        $\Sxf_{\mT \mT^\transp}$ & S-transform of the spectrum of $\mT \mT^\top \in \RR^{n \times n}$ (for observation sketch) \\
        \addlinespace[0.5ex]
        \arrayrulecolor{black}
        \bottomrule
    \end{tabularx}
    \caption{Summary of the specific notation used throughout the paper and the supplement.}
    \label{tab:specific-notation}
\end{table}

\restoregeometry

\clearpage
\section{Background on asymptotic freeness and free sketching support}
\label{sec:background-freesketching}

Free probability \citep{voiculescu1997free} is a mathematical framework that deals with non-commutative random variables. 
One of the key concepts in free probability is asymptotic freeness, which studies the behavior of random matrices in the limit as their dimension tends to infinity. 
This notion enables us to understand how independent random matrices become uncorrelated and behave as if they were freely independent in the high-dimensional limit.
Good full-length references on free probability theory include: \cite{mingo2017free}, \cite{bose2021random}.
Chapters 2.4 and 2.5 from \cite{tulino2004random} and \cite{tao2023topics}, respectively, are enjoyable introductions.

\subsection{Free probability theory}
\label{sec:free-probability-theory}

We begin with a few definitions from \cite{mingo2017free}.
\begin{definition}
    [$C^*$-probability space and state]
    A pair $(\setA, \varphi)$ is called a non-commutative \emph{$C^*$-probability space} if $\setA$ is a unital $C^*$-algebra and the linear functional $\varphi \colon \setA \to \complexset$ is a unital \emph{state}: i.e., $\varphi(1) = 1$ and $\varphi(a^* a) \geq 0$ for all $a \in \setA$.
\end{definition}

\begin{definition}[Freeness]
    \label{def:freeness}
    Let $(\setA, \varphi)$ be a $C^*$-probability space and let $(\setA_1, \ldots, \setA_s)$ be unital subalgebras of $\setA$. 
    Then $(\setA_1, \ldots, \setA_s)$ are \emph{free} with respect to $\varphi$ if, for any $r \geq 2$ and $a_1, \ldots, a_r \in \setA$ such that $\varphi(a_i) = 0$ for all $1 \leq i \leq r$ and $a_i \in \setA_{j_i}$ for $j_i \neq j_{i+1}$ for all $1 \leq i \leq r - 1$, we have $\varphi(a_1 \cdots a_r) = 0$.
    Furthermore, we say that elements $a_1, \ldots, a_s \in \setA$ are \emph{free} with respect to $\varphi$ if the corresponding generated unital algebras $\setA_1, \ldots, \setA_s$ are free.
\end{definition}
That is, we say that elements of the algebra are free if any alternating product of centered polynomials is also centered.

In this work, we will consider $\varphi$ to be the normalized trace---that is, the generalization of $\tfrac{1}{p} \tr[\mA]$ for $\mA \in \complexset^{p \times p}$ to elements of a $C^*$-algebra $\setA$. 
Specifically, for any self-adjoint $a \in \setA$ and any polynomial $p$, 
\[
\varphi(p(a)) = \int p(z) \, \mathrm{d} \mu_a(z),
\]
where $\mu_a$ is the probability measure characterizing the spectral distribution of $a$.

\begin{definition}
    [Convergence in spectral distribution]
    Let $(\setA, \varphi)$ be a $C^*$-probability space. 
    We say that $\mA_1, \ldots, \mA_m \in \complexset^{p \times p}$ \emph{converge in spectral distribution} to elements $a_1, \ldots, a_m \in \setA$ if for all $1 \leq \ell < \infty$ and $1 \leq i_{j} \leq m$ for $1 \leq j \leq \ell$, we have \[
    \tfrac{1}{p}\tr[\mA_{i_1} \cdots \mA_{i_\ell}] \to \varphi(a_{i_1} \cdots a_{i_\ell}).
    \]
\end{definition}

One limitation of standard free probability theory is that it does not allow us to consider general expressions of the form $\tr [\mTheta \mA]$ when $\mTheta$ has bounded trace norm, as this would require us to use an unbounded operator $\widetilde{\mTheta} = p \mTheta$ to evaluate $\smash{\tfrac{1}{p} \tr [\widetilde{\mTheta} \mA]}$, but such an unbounded $\widetilde{\mTheta}$ cannot be an element of a $C^*$-algebra. 
However, evaluation of such expressions is possible with an extension called \emph{infinitesimal} free probability~\citep{shlyakhtenko2018infinitesimal}, which is used in \Cref{thm:sketched-pseudoinverse} from \cite{lejeune2022asymptotics} that our results build upon.

\begin{definition}
    [Infinitesimal freeness]
    Unital subalgebras $\setA_1, \setA_2 \subseteq \setA$ are \emph{infinitesimally free} with respect to $(\varphi, \varphi')$ if, for any $r \geq 2$ and $a_1, \ldots, a_r \in \setA$ where $a_i \in \setA_{j_i}$ for $j_i \neq j_{i+1}$ for all $1 \leq i \leq r - 1$, we have 
    \[
    \varphi_t((a_1 - \varphi_t(a_1)) \cdots (a_r - \varphi_t(a_r))) = o(t),
    \] where $\varphi_t = \varphi + t \varphi'$.
\end{definition}

We lastly introduce a series of invertible transformations for an element $a$ of a $C^*$-probability space:
\begin{align}
    G_a(z) = \varphi \bigparen{\biginv{z - a}}
    \;
    \longleftrightarrow
    \;
    M_a(z) = \frac{1}{z} G_a\paren{\frac{1}{z}} - 1
    \;
    \longleftrightarrow
    \;
    \Sxf_a(z) = \frac{1 + z}{z} M_a^{\langle -1\rangle}(z),
    \nonumber
\end{align}
which are the Cauchy transform (negative of the Stieltjes transform), moment generating series $M_a(z) = \sum_{k=1}^\infty \varphi(a^k) z^k$, and S-transform of $a$, respectively.
Here $M_a^{\langle -1\rangle}$ denotes inverse under composition of $M_a$.

\subsection{Asymptotic freeness}
\label{sec:asymptotic-freeness}

Freeness is characterized by a certain non-commutative centered alternating product condition (see \Cref{def:freeness}) with respect to a state function. 
With some slight abuse of notation, we consider the state function $\smash{\tfrac{1}{p} \tr[\cdot]}$. 
Then two matrices $\mA, \mB \in \reals^{p \times p}$ would be said to be free if 
\begin{align}
    \tfrac{1}{p} \tr \bracket{\prod_{\ell=1}^L \poly_\ell^\mA(\mA) \poly_\ell^\mB(\mB)} = 0,
\end{align}
for all $L \geq 1$ and all centered polynomials---i.e., $\smash{\tfrac{1}{p} \tr[\poly_\ell^\mA(\mA)] = 0}$. 
The reason this is an abuse of notation is that finite matrices cannot satisfy this condition; however, they can satisfy it asymptotically as $p \to \infty$, and in this case we say that $\mA$ and $\mB$ are \emph{asymptotically free}. 

We test this property for CountSketch and SRDCT for polynomials of the form
\begin{align}
    \poly_r(\mA) = \mA^r - \tfrac{1}{p} \tr[\mA^r] \mI_p.
\end{align}
Specifically, we arbitrarily pick two choices 
\[
\poly(\mA,\mB) = \poly_1(\mA) \poly_2(\mB) \poly_2 (\mA) \poly_3(\mB)
\]
and 
\[
\poly(\mA,\mB) = \poly_3(\mA) \poly_1(\mB) \poly_4 (\mA) \poly_2(\mB)
\]
and evaluate $\smash{\tfrac{1}{p} \tr[\poly(\mA, \mS \mS^\transp)]}$ for increasing $p$ over 10 trials, where $\mA$ is a diagonal matrix with values linearly interpolating between $0.5$ and $1.5$ along the diagonal. 
As we see in \Cref{fig:free_sketching_investigation} (left), for both sketches, this normalized trace is quite small and tending to zero. 
This strongly supports the assumption that CountSketch and SRDCT are both asymptotically free from diagonal matrices, and we expect the same to hold if $\mA$ is rotated to be non-diagonal independently of the sampling of the sketching matrix $\mS$.

\begin{figure}[t]
    \centering
    \includegraphics[width=0.99\textwidth]{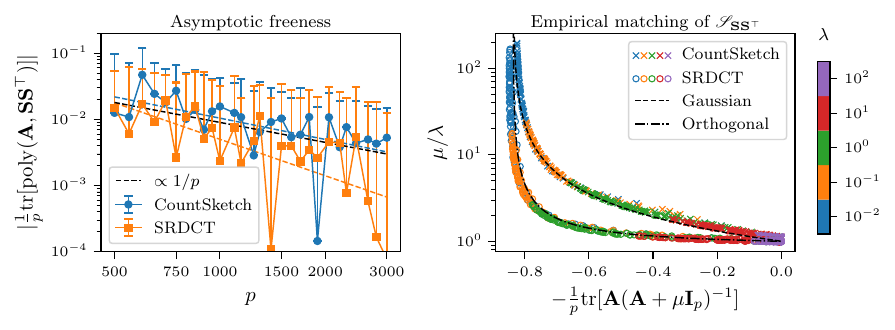}
    \caption{
    \textbf{Empirical support for asymptotic freeness and subordination relation}. 
    \textbf{Left:} We plot the absolute value of the average of the normalized traces of polynomials, which converge to zero. 
    We also plot best fit lines on the log--log scale (dashed). 
    Error bars denote one standard deviation over 10 trials, collected over both polynomials. 
    \textbf{Right:} We numerically compute $\mu$ and plot the empirical subordination relation, which are decreasing continuous functions that closely match the theoretical S-transforms of Gaussian (dashed) for CountSketch ($\times$) and orthogonal (dash--dot) for SRDCT ($\circ$). 
    Each mark in the scatter plots corresponds to a single $(\mA, \lambda)$ pair, and we solve for the corresponding $\mu$.
    }
    \label{fig:free_sketching_investigation}
\end{figure}

\subsection{Empirical subordination relations}
\label{sec:empirical-subordination-relations}

\subsubsection{Experiments on synthetic datasets}

Suppose $\mhSigma$ and $\mS \mS^\transp$ are free and \Cref{thm:sketched-pseudoinverse} holds. 
This means that a subordination relation via $\Sxf_{\mS \mS^\transp}$ should characterize the implicit regularization, so we test this implication empirically as well. 
Specifically, without using any known form for $\Sxf_{\mS \mS^\transp}$, we empirically verify that this mapping does not depend on $\mX$ and compare it to known S-transforms.

As in the previous section, we will simplify our tests by considering a diagonal $\mA$ instead of $\mhSigma$. 
We generate a family of $\mA = \diag{\va}$ parameterized by $a_0, s_0 > 0$, and $t_0 \in [0, 1]$ as
\begin{align}
    a_i = \frac{a_0}{1 - e^{-(t_i - t_0)/s_0}},
    \quad
    \text{where}
    \quad
    t_i = \frac{i - 1}{p - 1}.
\end{align}
This family spans a variety of spectral distributions and provides a rich class of matrices over which \Cref{thm:sketched-pseudoinverse} must hold simultaneously. 
For a fixed $700 \times 585$ sketching matrix $\mS$ that we sample for CountSketch and for SRDCT, we sampled $\mA$ over a $5 \times 5 \times 5$ grid of $a_0$ and $s_0$ logarithmically spaced between $0.1$ and $10$ and $t_0$ linearly spaced between $0$ and $1$. 
For each $\mA$, we used numerical root finding to determine $\mu$ such that 
\begin{align}
    \tfrac{1}{p} \tr \bracket{\mA \mS \biginv{\mS^\transp \mA \mS + \lambda  \mI_q} \mS^\transp} 
    = \tfrac{1}{p} \tr \bracket{\mA \biginv{\mA + \mu \mI_p}}
\end{align}
for each $\lambda \in \set{0.01, 0.1, 1, 10, 100}$. 
Then we construct a scatter plot of $\mu/\lambda$ and $\smash{\mu \tfrac{1}{p} \tr[(\mA + \mu \mI_p)^{-1}]}$ in \Cref{fig:free_sketching_investigation} (right), which should match $\Sxf_{\mS \mS^\transp}$ and be a decreasing continuous function. 
We see that this is indeed the case, and furthermore by also plotting the known S-transform for Gaussian and orthogonal sketches from \Cref{tab:S-functions}, we see that CountSketch matches the Gaussian function and SRDCT matches the orthogonal function. 

\subsubsection{Experiments on real datasets}
\label{sec:freeness-real-datasets}

We repeat the experiment in \Cref{fig:free_sketching_investigation} for real data in \Cref{fig:free_sketching_real_data}, using $\tfrac{1}{n} \mX^\transp \mX$ instead of $\mA$. For RCV1, since $\tfrac{1}{n} \mX^\transp \mX$ is very high dimensional and matrix inversion is costly, we perform randomized trace estimation using the formula
\begin{align}
    \tfrac{1}{p} \tr \bracket{\tfrac{1}{n} \mX^\transp \mX \biginv{\tfrac{1}{n} \mX^\transp \mX + \mu \mI_p}}
    \approx \tfrac{1}{p} \vz^\transp  \biginv{\tfrac{1}{n} \mX^\transp \mX + \mu \mI_p} 
    \tfrac{1}{n} \mX^\transp \mX
    \vz,
\end{align}
where $\vz \in \reals^{p}$ is an i.i.d.\ Rademacher vector and the inversion is computed using the conjugate gradient method. Additionally, due to the size of RCV1, we only evaluate the subordination relation for CountSketch which can be efficiently applied due to the sparsity of the data, and do not evaluate the SRDCT. We precompute the traces for both sketched and unsketched sides of the subordination relation for a range of values of $\lambda$ and $\mu$ and then construct the mapping via linear interpolation. Since $n < p$ for RNA-Seq (see \Cref{sec:experimental-details-fig:real-data}, the normalized traces are upper bounded by $n/p = 446/20223 \approx 0.022$, which limits the operating range of the subordination relation and therefore the $x$-axis of the plot from $-0.022$ to $0$.

\begin{figure}[t]
    \centering
    \includegraphics[width=\textwidth]{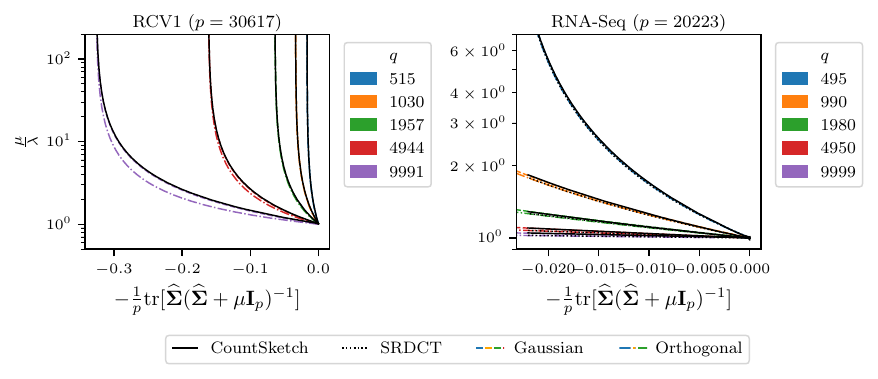}
    \caption{\textbf{Empirical support for subordination relation with real data.} We numerically compute $\mu$ and plot the empirical subordination relation for sketches of real data (RCV1 (\textbf{left}) and RNA-Seq (\textbf{right})) using CountSketch (black, solid) and SRDCT (black, dotted), which almost exactly match the theoretical S-transforms for Gaussian (dashed) and orthogonal (dash--dot) sketching, shown here for a single trial of random sketching for each value of $q$. As $q / p$ tends to zero, the subordination relation of all four sketches becomes indistinguishable.}
    \label{fig:free_sketching_real_data}
\end{figure}

\subsection{Known S-transforms}
\label{app:S-transforms}

We state some known S-transforms in the following table, where we let $\alpha = q/p$. 
We also assume that $\mS$ is normalized such that $\mS \mS^\transp \asympequi \mI_p$, following \cite{lejeune2022asymptotics}. 
For the i.i.d.\ sketch, this is simply the S-transform of the Marchenko--Pastur distribution, and for the orthogonal sketch, it is the S-transform of a binary distribution on $\set{0, \tfrac{1}{\alpha}}$. 
The identity sketch refers to simply $\mS = \mI_p$. 
There is currently no known S-transform for CountSketch, although our experiments in the previous sections suggest it is similar as for i.i.d.\ sketches.\footnote{See also the recent work \cite{chenakkod2023optimal} that show certain Gaussian universality results for sparse sketches like CountSketch.}

\begin{table}[!ht]
    \centering
    \begin{tabular}{cccc}
    \toprule
    Sketching family: & 
    IID & Orthogonal, SRFT & Identity \\
    \midrule
    $\Sxf_{\mS \mS^\transp}(w)$: & $\frac{\alpha}{\alpha + w}$ & $\frac{\alpha (1 + w)}{\alpha + w}$ & $1$ \\
    \bottomrule
    \end{tabular}
    \caption{Known S-transforms for normalized sketches}
    \label{tab:S-functions}
\end{table}

\section{Asymptotic equivalents for freely sketched resolvents}
\label{app:useful-asymp-equi}

In this section, we provide a brief background on the language of asymptotic equivalents used in the proofs throughout the paper.
We will state the definition of asymptotic equivalents and point to useful calculus rules.
For more details, see \cite{dobriban_wager_2018,dobriban_sheng_2021,lejeune2022asymptotics}. 

We use the language of asymptotic equivalents throughout the paper, defined formally as follows.

\begin{definition}
    [Asymptotic equivalence]
    \label{def:deterministic-equivalence}
    Consider sequences $( \bA_p )_{p \ge 1}$ and $( \bB_p )_{p \ge 1}$
    of (random or deterministic) matrices of growing dimension.
    We say that $\bA_p$ and $\bB_p$ are equivalent and write
    $\bA_p \asympequi \bB_p$ if
    $\lim_{p \to \infty} | \tr[\bC_p (\bA_p - \bB_p)] | = 0$ almost surely
    for any sequence $\bC_p$ matrices with bounded trace norm
    such that $\limsup \| \bC_p \|_{\mathrm{tr}} < \infty$
    as $p \to \infty$.
\end{definition}

The notion of deterministic equivalents obeys various calculus rules such as sum, product, differentiation, conditioning, substitution, among others.
We refer readers to \cite{patil2022bagging} for a comprehensive list of these calculus rules, their proofs, and other related details.

\subsection{Known asymptotic equivalents for ordinary ridge resolvents}
\label{app:detequi-standard-ridge}

In this section, we collect known asymptotic equivalents for the first- and second-order ordinary ridge resolvents.
See \cite{hastie2022surprises,patil2022bagging} for more details.

\begin{lemma}
    [Deterministic equivalents
    for first-order and second-order ordinary ridge resolvents]
    \label{lem:standard-ridge-detequi}
    Suppose \Cref{asm:data} holds.
    Then, for $\mu > - \lambda_{\min}^{+}(\tfrac{1}{n} \bX^\transp \bX)$,
    the following statements hold:
    \begin{enumerate}[leftmargin=7mm]
        \item First-order ordinary ridge resolvent:
        \begin{equation}
            \label{eq:detequi-ridge-firstorder}
            \mu (\bhSigma + \mu \bI_p)^{-1}
            \asympequi
            (v \bSigma + \bI_p)^{-1},
        \end{equation}
        where $v^{-1}$ is the most positive solution to
        \begin{equation}
            \label{eq:v-fixed-point}
            \mu
            =
            v^{-1}
            - \gamma \tfrac{1}{p} \tr[\bSigma (v \bSigma + \bI_p)^{-1}].
        \end{equation}
        \item Second-order ordinary ridge resolvent (population version):
        \begin{equation}
            \label{eq:detequi-ridge-var}
            \mu^2
            (\bhSigma + \mu \bI_p)^{-1} \bSigma (\bhSigma + \mu \bI_p)^{-1}
            \asympequi 
            (1 + \tv_b)
            (v \bSigma + \bI_p)^{-1}
            \bSigma
            (v \bSigma + \bI_p)^{-1},
        \end{equation}
        where $v$ as defined in
        \eqref{eq:v-fixed-point},
        and $\tv_b$
        is defined in terms of $v$ by the equation
        \begin{equation}
            \label{eq:def-tvb-ridge}
            \tv_b
            =
            \ddfrac
            {
                \gamma \tfrac{1}{p} \tr[\bSigma^2 (v \bSigma + \bI_p)^{-2}]
            }
            {
                v^{-2}
                - \gamma \tfrac{1}{p} \tr[\bSigma^2 (v \bSigma + \bI_p)^{-2}]
            }.
        \end{equation}
        \item Second-order ordinary ridge resolvent (empirical version):
        \begin{equation}
            \label{eq:detequi-ridge-bias}
            (\bhSigma + \mu \bI_p)^{-1}
            \bhSigma
            (\bhSigma + \mu \bI_p)^{-1}
            \asympequi
            \tv_v 
            (v \bSigma + \bI_p)^{-1} 
            \bSigma 
            (v \bSigma + \bI_p)^{-1},
        \end{equation}
        where $v$
        is as defined in \eqref{eq:v-fixed-point},
        and $\tv_v$ is defined
        in terms of $v$ by the equation
        \begin{equation}
            \label{eq:def-tvv-ridge}
            \tv_v^{-1}
            = 
                v^{-2}
                - 
                \gamma
                \tfrac{1}{p} \tr[\bSigma^2 (v \bSigma + \bI_p)^{-2}].
        \end{equation}
    \end{enumerate}
\end{lemma}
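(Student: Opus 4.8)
The plan is to obtain the first-order equivalent \eqref{eq:detequi-ridge-firstorder} as the standard anisotropic Marchenko--Pastur deterministic equivalent, and then to derive the two second-order equivalents \eqref{eq:detequi-ridge-var} and \eqref{eq:detequi-ridge-bias} from it by differentiation, using the calculus of asymptotic equivalents. For \eqref{eq:detequi-ridge-firstorder}: writing $\bhSigma = \tfrac1n\sum_{i=1}^n \bx_i\bx_i^\transp$ with $\bx_i = \bSigma^{1/2}\bz_i$ as in \Cref{asm:data}, I would fix a deterministic $\bC$ with bounded trace norm and expand $\tr[\bC(\bhSigma+\mu\bI_p)^{-1}]$ via the leave-one-out / Sherman--Morrison identity, replacing each quadratic form $\bx_i^\transp\bM\bx_i$ by $\tr[\bSigma\bM]$ up to a negligible error through the Hanson--Wright inequality (the $4+\delta$ moment bound in \Cref{asm:data} suffices). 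This produces the self-consistent equation \eqref{eq:v-fixed-point}; one then checks that for $\mu > -\lambdaminnz(\bhSigma)$ there is a unique \emph{most positive} solution $v$ by a monotonicity argument, established first for large $\mu$ and then continued analytically down to the edge. Equivalently, one may simply cite this statement from \cite{hastie2022surprises,patil2022bagging} and only rewrite it in the present $v$-parameterization.

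For the empirical second-order equivalent \eqref{eq:detequi-ridge-bias}, I would use the algebraic identity $(\bhSigma+\mu\bI_p)^{-1}\bhSigma(\bhSigma+\mu\bI_p)^{-1} = (\bhSigma+\mu\bI_p)^{-1} + \mu\,\partial_\mu(\bhSigma+\mu\bI_p)^{-1}$, so that it suffices to differentiate \eqref{eq:detequi-ridge-firstorder} in $\mu$. Since $\mu\mapsto(\bhSigma+\mu\bI_p)^{-1}$ and $\mu\mapsto(v(\mu)\bSigma+\bI_p)^{-1}$ extend holomorphically to a complex neighborhood of the relevant interval and the first-order equivalence holds locally uniformly there, the differentiation rule for asymptotic equivalents applies (Cauchy's integral formula controls $\partial_\mu$). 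Implicitly differentiating \eqref{eq:v-fixed-point} expresses $v'(\mu)$ through $v^{-2} - \gamma\tfrac1p\tr[\bSigma^2(v\bSigma+\bI_p)^{-2}]$, and assembling the pieces produces exactly the factor $\tv_v$ of \eqref{eq:def-tvv-ridge}.

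For the population second-order equivalent \eqref{eq:detequi-ridge-var}, where $\bSigma$ appears in the middle, I would instead introduce the family $\bA_t = \bhSigma + \mu\bI_p + t\bSigma$; since $\mu\bI_p + t\bSigma$ commutes with $\bSigma$, one has $\bA_t = \bSigma^{1/2}(\tfrac1n\bZ^\transp\bZ + \mu\bSigma^{-1} + t\bI_p)\bSigma^{1/2}$ (with $\bX = \bZ\bSigma^{1/2}$), and hence $-\partial_t\bA_t^{-1}\big|_{t=0} = (\bhSigma+\mu\bI_p)^{-1}\bSigma(\bhSigma+\mu\bI_p)^{-1}$. Applying the first-order deterministic equivalent to the family $\bA_t^{-1}$ (the whitened problem is still a resolvent of $\tfrac1n\bZ^\transp\bZ$) and differentiating at $t=0$ yields the correction $1+\tv_b$ with $\tv_b$ as in \eqref{eq:def-tvb-ridge}. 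As an alternative to both differentiation arguments, one can run the leave-one-out expansion directly on the bilinear form $\tr[\bC(\bhSigma+\mu\bI_p)^{-1}\bD(\bhSigma+\mu\bI_p)^{-1}]$, in which case the factors $\tv_v$ and $1+\tv_b$ emerge as resolvent-fluctuation (variance-inflation) terms; these computations are recorded in \cite{hastie2022surprises,patil2022bagging}.

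The main obstacle is analytic rather than algebraic: making ``differentiation preserves asymptotic equivalence'' rigorous requires the first-order equivalence to hold uniformly over complex neighborhoods of each compact subinterval of $(-\lambdaminnz(\bhSigma),\infty)$, together with separate control of the edge behavior as $\mu\downarrow -\lambdaminnz(\bhSigma)$ (uniform separation of $\lambdaminnz(\bhSigma)$ from its almost-sure limit, then analytic continuation), and one must track the ``most positive'' branch of the fixed-point equation consistently through all of these continuations.
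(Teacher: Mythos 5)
The paper does not actually prove \Cref{lem:standard-ridge-detequi}: it is presented in \Cref{app:detequi-standard-ridge} as a collection of known results, with the proof deferred entirely to the citations \cite{hastie2022surprises,patil2022bagging}. Your sketch --- the leave-one-out/Hanson--Wright derivation of the first-order equivalent and fixed point \eqref{eq:v-fixed-point}, followed by the differentiation calculus (in $\mu$ for the empirical version via $(\bhSigma + \mu \bI_p)^{-1}\bhSigma(\bhSigma + \mu \bI_p)^{-1} = (\bhSigma + \mu \bI_p)^{-1} + \mu\,\partial_\mu(\bhSigma + \mu \bI_p)^{-1}$, and in an auxiliary perturbation parameter for the population version) --- is exactly the route taken in those references, and your closing observation that one may simply cite them is what the paper in fact does.
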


\subsection{New asymptotic equivalents for freely sketched ridge resolvents}

In this section, we derive first- and second-order equivalences for (both feature and observation) sketched resolvents.
Their proofs are provided just after the statements.

\begin{lemma}
    [General first order equivalence for freely sketched ridge resolvents]
    \label{lem:general-first-order}
    The following statements hold:
    \begin{enumerate}[leftmargin=7mm]
        \item First-order sketched ridge resolvent (for feature sketch):
        Suppose $\Cref{cond:sketch}$ holds for $\mS \mS^\transp$. 
        Then, for all $\lambda > \lambda_0$,
        \begin{equation}
            \bS
            (\bS^\top \tfrac{1}{n} \bX^\top \bX \bS
            + \lambda \bI_q)^{-1}
            \bS^\top
            \asympequi
            (\tfrac{1}{n} \bX^\top \bX + \mu \bI_p)^{-1},
        \end{equation}  
        where $\mu$ solves 
        \begin{equation}
        \label{eq:fp-dual-sketch}
        \mu = \lambda \Sxf_{\mS\mS^\transp}\big(-\tfrac{1}{p} \tr\big[\tfrac{1}{n} \mX^\transp \mX \biginv{\tfrac{1}{n} \mX^\transp \mX + \mu \mI_p}\big]\big).
        \end{equation}
        \item First-order sketched ridge resolvent (for observation sketch):
        Suppose $\Cref{cond:sketch}$ holds for $\mT \mT^\transp$. 
        Then, for all $\lambda > \widetilde{\lambda}_0$,
        \begin{equation}
            (\tfrac{1}{n} \bX^\top \mT \mT^\top \bX + \lambda \bI_p)^{-1}
            \bX^\top \mT \mT^\top
            \asympequi
            \bX^\top (\tfrac{1}{n} \bX \bX^\top + \nu \bI_n)^{-1},
        \end{equation}
    where $\nu$ solves 
    \begin{equation}
    \label{eq:fp-primal-sketch}
    \nu = \lambda  \Sxf_{\mT \mT^\transp}\big(-\tfrac{1}{n} \tr\big[\tfrac{1}{n} \mX \mX^\transp \biginv{\tfrac{1}{n} \mX \mX^\transp + \nu \mI_n}\big]\big).
    \end{equation}
    \end{enumerate}
\end{lemma}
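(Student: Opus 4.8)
The plan is to derive both equivalences from the free-sketching equivalence of \Cref{thm:sketched-pseudoinverse}, combined with a push-through (Woodbury-type) identity and the calculus of asymptotic equivalents; no new free-probability input is needed.

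\textbf{Feature sketch.} This is essentially a restatement of \Cref{thm:sketched-pseudoinverse}: taking $\mhSigma = \tfrac{1}{n}\bX^\top\bX$ there gives the claimed equivalence for all $\lambda > \lambda_0$, the rightmost relation in \eqref{eq:dual-fp-main} coincides with the fixed-point equation \eqref{eq:fp-dual-sketch} upon writing $\mhSigma = \tfrac{1}{n}\bX^\top\bX$, and the monotonicity of $\mu$ in $\lambda$ together with $\mu > -\lambdaminnz(\mhSigma)$ are inherited verbatim. So there is nothing further to do in this case.

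\textbf{Observation sketch.} The idea is to move the sketch off the data matrix by push-through and then reduce to an $n$-dimensional instance of \Cref{thm:sketched-pseudoinverse}. Put $\mW := \tfrac{1}{\sqrt{n}}\mT^\top\bX$, so that $\mW^\top\mW = \tfrac{1}{n}\bX^\top\mT\mT^\top\bX$ and $\mW\mW^\top = \mT^\top(\tfrac{1}{n}\bX\bX^\top)\mT$. For $\lambda > \widetilde{\lambda}_0$ the relevant matrices are invertible (the common nonzero eigenvalues of $\mW^\top\mW$ and $\mW\mW^\top$ exceed $-\lambda$), so $(\mW^\top\mW + \lambda\bI_p)^{-1}\mW^\top = \mW^\top(\mW\mW^\top + \lambda\bI_m)^{-1}$, which after tracking the factors of $\sqrt{n}$ becomes
\[
\bigparen{\tfrac{1}{n}\bX^\top\mT\mT^\top\bX + \lambda\bI_p}^{-1}\bX^\top\mT\mT^\top = \bX^\top\,\mT\bigparen{\mT^\top\bigparen{\tfrac{1}{n}\bX\bX^\top}\mT + \lambda\bI_m}^{-1}\mT^\top.
\]
The bracketed factor on the right is a sketched resolvent of the $n \times n$ matrix $\tfrac{1}{n}\bX\bX^\top$ with sketch $\mT$, so \Cref{thm:sketched-pseudoinverse} applies to it in this ``$n$-ambient'' form --- legitimate because \Cref{cond:sketch} is posited for $\mT\mT^\top$ and $\tfrac{1}{n}\bX\bX^\top$, with $\tfrac{1}{n}\tr[\cdot]$ in place of $\tfrac{1}{p}\tr[\cdot]$ --- and yields
\[
\mT\bigparen{\mT^\top\bigparen{\tfrac{1}{n}\bX\bX^\top}\mT + \lambda\bI_m}^{-1}\mT^\top \asympequi \bigparen{\tfrac{1}{n}\bX\bX^\top + \nu\bI_n}^{-1},
\]
with $\nu > -\lambdaminnz(\tfrac{1}{n}\bX\bX^\top)$ solving the fixed point obtained from \eqref{eq:dual-fp-main} under the substitutions $p \mapsto n$, $\mhSigma \mapsto \tfrac{1}{n}\bX\bX^\top$, and $\Sxf_{\mS\mS^\top}\mapsto\Sxf_{\mT\mT^\top}$, which is precisely \eqref{eq:fp-primal-sketch}. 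Left-multiplying by $\bX^\top$ then gives the stated equivalence.

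\textbf{Main obstacle.} The one delicate step is this final left-multiplication by $\bX^\top$: since $\|\bX^\top\|_{\mathrm{op}}$ is of order $\sqrt{n}$ under the paper's data assumptions, $\bX^\top$ is not a bounded-operator-norm multiplier, so the elementary product rule for asymptotic equivalents does not apply directly. I would handle this either by appealing to the extended calculus rules that permit multiplication by the (normalized) design matrix --- legitimate here because every downstream use of this lemma pairs the $\bX^\top$ with a $\tfrac{1}{n}\bX$ coming from the prediction functional, restoring a bounded-operator-norm object $\tfrac{1}{n}\bX\bX^\top(\cdots)$ --- or by re-deriving the $n$-ambient instance of \Cref{thm:sketched-pseudoinverse} with the factor $\bX^\top$ absorbed into the test functional from the start, so that the quantity one must control is always a normalized trace $\tfrac{1}{n}\tr[\cdot]$. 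Everything else is an algebraic identity plus a direct citation, and the argument requires no distributional assumption on $(\bX,\by)$ beyond what makes \Cref{cond:sketch} meaningful.
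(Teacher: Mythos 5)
Your proof is correct and follows essentially the same route as the paper's: the feature-sketch case is a direct citation of \Cref{thm:sketched-pseudoinverse}, and the observation-sketch case uses the same Woodbury/push-through identity followed by an application of the feature-sketch result with $\mT$ playing the role of $\mS$ and $\mX^\transp$ playing the role of $\mX$. The subtlety you flag about left-multiplying by $\mX^\transp$ (whose operator norm grows like $\sqrt{n}$, so the elementary product rule for asymptotic equivalents does not apply verbatim) is genuine and is not addressed in the paper's own two-line proof; your proposed remedies --- absorbing $\mX^\transp$ into the test functional, or observing that every downstream use pairs it with a $\tfrac{1}{n}\mX$ so that the effective test matrix retains bounded trace norm --- are the right way to close that gap.
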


\begin{lemma}[General second order equivalence for freely sketched ridge resolvents]
    \label{lem:general-second-order}
    Under the settings of \Cref{lem:general-first-order},
    for any positive semidefinite $\mPsi$ with uniformly bounded operator norm,
    the following statements hold:
    \begin{enumerate}[leftmargin=7mm]
        \item Second-order sketched ridge resolvent (for feature sketch):
        For all $\lambda > \lambda_0$,
        \begin{multline}
            \mS \biginv{\mS^\transp \tfrac{1}{n} \mX^\transp \mX \mS + \lambda \mI_q} \mS^\transp \mPsi \mS \biginv{\mS^\transp \tfrac{1}{n} \mX^\transp \mX \mS + \lambda \mI_q} \mS^\transp \\
            \asympequi
            \biginv{\tfrac{1}{n} \mX^\transp \mX + \mu \mI_p} (\mPsi + \mu_{\mPsi}' \mI_p)
            \biginv{\tfrac{1}{n} \mX^\transp \mX + \mu \mI_p},
            \label{eq:general-second-order}
        \end{multline}
        where $\mu'_{\mPsi} \ge 0$ is given by:
        \begin{align}
            \label{eq:mu'_mPsi}
            \mu'_{\mPsi} = -
            \frac{\partial \mu}{\partial \lambda} 
            \lambda^2
            \Sxf_{\mS \mS^\transp}'\big(- \tfrac{1}{p}\tr\big[\tfrac{1}{n} \mX^\transp \mX \biginv{\tfrac{1}{n} \mX^\transp \mX + \mu \mI_p}\big]\big)
            \tfrac{1}{p} \tr\big[\mPsi \biginv[2]{\tfrac{1}{n} \mX^\transp \mX + \mu \mI_p}\big].
        \end{align}
        \item Second-order sketched ridge resolvent (for observation sketch):
        For all $\lambda > \widetilde{\lambda}_0$,
        \begin{multline}
            \tfrac{1}{n} \mT \mT^\ctransp \mX \biginv{\tfrac{1}{n} \mX^\ctransp \mT \mT^\ctransp \mX + \lambda \mI_p}
            \mPsi \biginv{\tfrac{1}{n} \mX^\ctransp \mT \mT^\ctransp \mX + \lambda \mI_p} \mX^\ctransp \mT \mT^\ctransp \\
            \asympequi
            \biginv{\tfrac{1}{n} \mX \mX^\top + \nu \mI_n} (\tfrac{1}{n} \mX \mPsi \mX^\top + \nu_{\mPsi}' \mI_n)
            \biginv{\tfrac{1}{n} \mX \mX^\transp + \nu \mI_n},
            \label{eq:general-second-order-primal}
        \end{multline}
        where $\nu_{\mPsi}' \ge 0$ is given by:
        \begin{align}
            \hspace{-1.5em}
            \nu'_{\mPsi} 
            =
            {-
            \frac{\partial \mu}{\partial \lambda} 
            \lambda^2
            \Sxf_{\mT \mT^\transp}'\big(- \tfrac{1}{n}\tr\big[\tfrac{1}{n} \mX \mX^\transp \biginv{\tfrac{1}{n} \mX \mX^\transp + \nu \mI_n}\big]\big)}
            \tfrac{1}{p} \tr\big[\tfrac{1}{n} \mX \mPsi \mX^\transp \biginv[2]{\tfrac{1}{n} \mX \mX^\transp + \nu \mI_n}\big]. \label{eq:tmu'_mPsi}
        \end{align}
    \end{enumerate}
\end{lemma}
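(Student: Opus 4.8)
The plan is to derive both second-order equivalences by differentiating the first-order equivalences of \Cref{lem:general-first-order} along a scalar perturbation of the (sample) covariance. Consider first the feature-sketch statement \eqref{eq:general-second-order}. I would set $\mhSigma_s \defeq \mhSigma + s\mPsi$ and apply \Cref{lem:general-first-order}(1) (equivalently \Cref{thm:sketched-pseudoinverse}) with $\mhSigma_s$ in place of $\mhSigma$, which is legitimate for $s$ in a neighborhood of $0$ because $\mS\mS^\transp$ is asymptotically free from the algebra generated by the pair $(\mhSigma,\mPsi)$ under \Cref{cond:sketch}. This gives $\mS(\mS^\transp\mhSigma_s\mS+\lambda\mI_q)^{-1}\mS^\transp \asympequi (\mhSigma_s+\mu_s\mI_p)^{-1}$, where $\mu_s > -\lambdaminnz(\mhSigma_s)$ solves \eqref{eq:fp-dual-sketch} with $\mhSigma$ replaced by $\mhSigma_s$. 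Differentiating both sides at $s=0$ (using the differentiation calculus rule for asymptotic equivalents together with the chain rule for resolvents), the left side yields exactly the negative of the left side of \eqref{eq:general-second-order}, while the right side yields $-(\mhSigma+\mu\mI_p)^{-1}(\mPsi+\mu'_s|_{s=0}\mI_p)(\mhSigma+\mu\mI_p)^{-1}$; the two signs cancel, and the claimed equivalence holds with $\mu'_\mPsi = \tfrac{\mathrm{d}}{\mathrm{d}s}\mu_s\big|_{s=0}$.

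It then remains to evaluate this derivative. Writing $g_s \defeq -\tfrac1p\tr[\mhSigma_s(\mhSigma_s+\mu_s\mI_p)^{-1}]$ so that $\mu_s = \lambda\Sxf_{\mS\mS^\transp}(g_s)$, I would differentiate at $s=0$, separating the explicit $s$-dependence through $\mhSigma_s$ from the implicit dependence through $\mu_s$; the identity $\mhSigma_s(\mhSigma_s+\mu_s\mI_p)^{-1} = \mI_p - \mu_s(\mhSigma_s+\mu_s\mI_p)^{-1}$ makes these two derivatives explicit and produces a linear equation for $\mu'_s|_{s=0}$ with the factor $(1-\lambda\Sxf'_{\mS\mS^\transp}(g)\,\partial_\mu g)^{-1}$. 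To recognize this factor, I would differentiate the unperturbed fixed point $\mu = \lambda\Sxf_{\mS\mS^\transp}(g)$ with respect to $\lambda$, obtaining $\partial\mu/\partial\lambda = (\mu/\lambda)(1-\lambda\Sxf'_{\mS\mS^\transp}(g)\,\partial_\mu g)^{-1}$; substituting collapses the expression precisely to \eqref{eq:mu'_mPsi}. Nonnegativity of $\mu'_\mPsi$ then follows from $\partial\mu/\partial\lambda \ge 0$ (monotonicity in \Cref{thm:sketched-pseudoinverse}), monotonicity of the S-transform (so $\Sxf'_{\mS\mS^\transp}\le 0$), and $\tfrac1p\tr[\mPsi(\mhSigma+\mu\mI_p)^{-2}]\ge 0$ since $\mPsi\succeq 0$ and $(\mhSigma+\mu\mI_p)^{-2}\succeq 0$.

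For the observation-sketch statement \eqref{eq:general-second-order-primal} I would first apply the push-through identity $(\tfrac1n\mX^\transp\mT\mT^\transp\mX+\lambda\mI_p)^{-1}\mX^\transp\mT\mT^\transp = \mX^\transp(\tfrac1n\mT\mT^\transp\mX\mX^\transp+\lambda\mI_n)^{-1}\mT\mT^\transp$ (and its transpose) to rewrite the left side in the $n$-side resolvent form to which \Cref{lem:general-first-order}(2) applies. Perturbing $\tfrac1n\mX\mX^\transp \mapsto \tfrac1n\mX\mX^\transp + \tfrac{s}{n}\mX\mPsi\mX^\transp$ — exactly the $n$-side image of $\mhSigma\mapsto\mhSigma+s\mPsi$ — and differentiating at $s=0$ gives \eqref{eq:general-second-order-primal} with $\nu'_\mPsi = \tfrac{\mathrm{d}}{\mathrm{d}s}\nu_s\big|_{s=0}$, and the same implicit-differentiation computation (now using $g_s = -\tfrac1n\tr[\tfrac1n\mX\mX^\transp(\tfrac1n\mX\mX^\transp+\nu_s\mI_n)^{-1}]$ and $D_\mPsi g \propto \tr[\tfrac1n\mX\mPsi\mX^\transp(\tfrac1n\mX\mX^\transp+\nu\mI_n)^{-2}]$) yields \eqref{eq:tmu'_mPsi}.

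The main obstacle I anticipate is rigorously justifying the differentiation step: the differentiation rule for asymptotic equivalents needs the first-order equivalence to hold uniformly for $s$ in a complex neighborhood of $0$ with holomorphic dependence on $s$, so that a Vitali/normal-families argument transfers the convergence to derivatives. Two things must be checked along the perturbation path: (i) that $\mS\mS^\transp$ is asymptotically free from $(\mhSigma,\mPsi)$ \emph{jointly}, not merely from $\mhSigma$ alone, so that \Cref{thm:sketched-pseudoinverse} is applicable for all $s$ near $0$; and (ii) that the fixed point $\mu_s$ is well-defined, remains in the admissible range $\mu_s > -\lambdaminnz(\mhSigma_s)$, and is continuously differentiable in $s$ near $0$ — which follows from the strict monotonicity in \Cref{thm:sketched-pseudoinverse} and the implicit function theorem applied to the fixed-point equation. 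The residual manipulations are routine resolvent calculus.
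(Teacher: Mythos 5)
Your proposal is correct and follows essentially the same route as the paper: the paper also perturbs the sample covariance as $\mA_z = \tfrac{1}{n}\mX^\transp\mX + z\mPsi$, differentiates the first-order equivalence of \Cref{thm:sketched-pseudoinverse} at $z=0$ so that $\mu'_\mPsi = \partial\mu_z/\partial z|_{z=0}$, solves the linearized fixed-point equation, and eliminates the common factor via the $\lambda$-derivative of the subordination relation, then reduces the observation-sketch case to the feature-sketch case by the Woodbury/push-through identity with $\mT\leftrightarrow\mS$, $\mX\leftrightarrow\mX^\transp$, and $\tfrac{1}{n}\mX\mPsi\mX^\transp\leftrightarrow\mPsi$. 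The analytic-continuation/uniformity caveat you flag is handled in the paper only by reference to the strategy of Theorem 4.8 of \cite{lejeune2022asymptotics}, so your treatment is, if anything, more explicit on that point.
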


\begin{proof}[Proof of \Cref{lem:general-first-order}]
    There are two cases two show.

    \begin{enumerate}[leftmargin=7mm]
        \item
        The statement for the feature sketch
        follows from \Cref{thm:sketched-pseudoinverse}.

        \item
        For the statement for the observation sketch,
        we use the Woodbury matrix identity to write
        \begin{align}
            (\tfrac{1}{n} \bX^\top \mT \mT^\top \bX + \lambda \bI_p)^{-1}
            \bX^\top \mT \mT^\top
            &= \mX^\transp \mT (\tfrac{1}{n} \mT^\transp \bX \bX^\transp \mT  + \lambda \bI_m)^{-1} \mT^\transp.
        \end{align}
        Now we can use the result from feature sketch with $\mT$ playing the role of $\mS$ and $\mX^\transp$ playing the role of $\mX$.
    \end{enumerate}

    This completes the two cases and finishes the proof.
\end{proof}

\begin{proof}[Proof of \Cref{lem:general-second-order}]
There are again two cases to prove.

\begin{enumerate}[leftmargin=7mm]
    \item We begin with feature sketch.
    Let $\mA_z = \tfrac{1}{n} \mX^\transp \mX + z \mPsi$ with corresponding $\mu_z$ from \Cref{cond:sketch}. 
    Following the same strategy as the proof of \cite[Theorem 4.8]{lejeune2022asymptotics}, the two sides of \eqref{eq:general-second-order} are equal to
    \begin{align}
        -\frac{\partial}{\partial z} \mS \biginv{\mS^\transp \mA_z \mS + \lambda \mI_q} \mS^\transp \asympequi
        -\frac{\partial}{\partial z}
        \biginv{\mA_z + \mu_z \mI_p}
    \end{align}
    at $z = 0$, and therefore $\mu' = \partial \mu_z / \partial z$ at $z=0$. 
    Letting 
    \[
    \Sxf' = \Sxf_{\mS \mS^\transp}'(- \tfrac{1}{p}\tr\big[\tfrac{1}{n} \mX^\transp \mX \biginv{\tfrac{1}{n} \mX^\transp \mX + \mu \mI_p}\big])
    \]
    for brevity, and noting that 
    \[
    - \mA_z \biginv{\mA_z + \mu_z \mI_p} = \mu_z \biginv{\mA_z + \mu_z \mI_p} - 1,
    \]
    we differentiate \eqref{eq:cond-sketch} to obtain for $z = 0$ 
    \begin{align}
        \mu'
        = \lambda \Sxf' \cdot \paren{\mu' \tfrac{1}{p}\tr\big[\biginv{\tfrac{1}{n} \mX^\transp \mX + \mu \mI_p}\big] -\mu \tfrac{1}{p} \tr\big[\biginv[2]{\tfrac{1}{n} \mX^\transp \mX + \mu \mI_p} (\mPsi + \mu' \mI_p)\big]}.
    \end{align}
    Solving for $\mu'$, we get
    \[
        \mu'
        = 
        \frac
        {-\lambda \mu \Sxf' \tfrac{1}{p}\tr\big[\mPsi \biginv[2]{\tfrac{1}{n} \mX^\transp \mX + \mu \mI_p}\big]}
        { \lambda \mu \Sxf' \tfrac{1}{p} \tr\big[\biginv[2]{\tfrac{1}{n} \mX^\transp \mX + \mu \mI_p}\big] - \lambda \Sxf' \tfrac{1}{p} \tr\big[\biginv{\tfrac{1}{n} \mX^\transp \mX + \mu \mI_p}\big] + 1}.
    \]
    Meanwhile, if we take partial derivatives with respect to $\lambda$ (after dividing by $\lambda$ on both sides),
    \begin{align}
        \frac{\partial \mu}{\partial \lambda} \frac{1}{\lambda} - \frac{\mu}{\lambda^2} = \Sxf' \cdot \paren{\tfrac{1}{p}\tr\big[\biginv{\tfrac{1}{n} \mX^\transp \mX + \mu \mI_p}\big] -\mu \tfrac{1}{p} \tr\big[ 
        \biginv[2]{\tfrac{1}{n} \mX^\transp \mX + \mu \mI_p}\big]} \frac{\partial \mu}{\partial \lambda}.
    \end{align}
    Combining these two equations gives the stated result for the feature sketch.
    \item
    For the observation sketch, we once again use the Woodbury matrix identity to write
    \begin{align}
        & \tfrac{1}{n} \mT \mT^\ctransp \mX \biginv{\tfrac{1}{n} \mX^\ctransp \mT \mT^\ctransp \mX + \lambda \mI_p}
        \mPsi \biginv{\tfrac{1}{n} \mX^\ctransp \mT \mT^\ctransp \mX + \lambda \mI_p} \mX^\ctransp \mT \mT^\ctransp \\
        &= \tfrac{1}{n} \mT \biginv{\tfrac{1}{n} \mT^\ctransp \mX \mX^\ctransp \mT + \lambda \mI_m} \mT^\ctransp \mX
        \mPsi \mX^\ctransp \mT \biginv{\tfrac{1}{n} \mT^\ctransp \mX \mX^\ctransp \mT + \lambda \mI_m} \mT^\ctransp.
    \end{align}
    The equivalence in \eqref{eq:general-second-order-primal} and the inflation parameter in \eqref{eq:tmu'_mPsi} now follow from the second-order result for feature sketch by substituting $\mT$ for $\mS$, $\mX$ for $\mX^\top$, and $\tfrac{1}{n} \mX \mPsi \mX^\top$ for $\mPsi$ in \eqref{eq:general-second-order}.
\end{enumerate}

This finishes the two cases and concludes the proof.
\end{proof}

\clearpage
\section{Proofs in \Cref{sec:squared-risk}}
\label{app:proofs-sec:sqaured-risk}

\subsection{Proof of \Cref{thm:risk-gcv-asymptotics}}
\label{app:proof-thm:risk-gcv-asymptotics}

Below we first provide the complete statement of \Cref{thm:risk-gcv-asymptotics}, which includes expressions for $\mu'$ and $\mu''$ that are excluded from the main paper.
\begin{theorem}
    [Squared risk and GCV asymptotics for feature sketch]
    \label{thm:risk-gcv-asymptotics-appendix}
    Suppose \Cref{cond:sketch} hold.
    Then, for all $\lambda > \lambda_0 \defeq -\liminf_{p \to \infty} \min_{k \in [K]}{\lambdaminnz(\mS^\transp_k \mhSigma \mS_k)}$ and all $K$,
    \begin{gather}
        \label{eq:risk-gcv-decomp-appendix}
        R \bigparen{\widehat{\vbeta}_{\lambda}^\ens}
        \asympequi
        R \bigparen{ \widehat{\vbeta}_{\mu}^\ridge} + \frac{\mu' \Delta}{K}
        \quad
        \text{and}
        \quad
        \hR \bigparen{\widehat{\vbeta}_{\lambda}^\ens}
        \asympequi
        \hR \bigparen{ \widehat{\vbeta}_{\mu}^\ridge} + \frac{\mu'' \Delta}{K}, 
    \end{gather}
    where $\mu$ is an implicit regularization parameter that solves \eqref{eq:fp-dual-sketch},
    $\Delta = \tfrac{1}{n} \vy^\transp (\tfrac{1}{n} \mX \mX^\transp + \mu \bI_n)^{-2} \vy$,
    and $\mu' \geq 0$ is an inflation factor in the risk decomposition given by:
    \begin{equation}
    \label{eq:mu'-proof}
        \mu'
        =
        - \frac{\partial \mu}{\partial \lambda}
        \lambda^2
        \Sxf_{\mS \mS^\transp}'\big(- \tfrac{1}{p}\tr\big[\tfrac{1}{n} \mX^\transp \mX \biginv{\tfrac{1}{n} \mX^\transp \mX + \mu \mI_p}\big]\big)
            \tfrac{1}{p} \tr\big[\mSigma \biginv[2]{\tfrac{1}{n} \mX^\transp \mX + \mu \mI_p}\big],
    \end{equation}
    while $\mu'' \geq 0$ is an inflation factor in the GCV decomposition given by:
    \begin{equation}
    \label{eq:mu''-proof}
        \mu''
        = 
        \ddfrac{
        - \frac{\partial \mu}{\partial \lambda}
        \lambda^2
        \Sxf_{\mS \mS^\transp}'\big(- \tfrac{1}{p}\tr\big[\tfrac{1}{n} \mX^\transp \mX \biginv{\tfrac{1}{n} \mX^\transp \mX + \mu \mI_p}\big]\big)
        \tfrac{1}{p} \tr\big[\tfrac{1}{n} \mX^\top \mX \biginv[2]{\tfrac{1}{n} \mX^\transp \mX + \mu \mI_p}\big]}
        {\left(1 - \tfrac{1}{n} \tr[\tfrac{1}{n} \bX^\top \bX (\tfrac{1}{n} \bX^\top \bX + \mu \bI_p)^{-1}] \right)^2}.
    \end{equation}
\end{theorem}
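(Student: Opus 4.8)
The plan is to reduce both the risk $R(\vhbeta_\lambda^\ens)$ and its GCV surrogate $\hR(\vhbeta_\lambda^\ens)$ to quadratic forms in the sketched resolvents, and then evaluate those forms with the first-order equivalence of \Cref{thm:sketched-pseudoinverse} (equivalently \Cref{lem:general-first-order}) for the cross terms and the second-order equivalence of \Cref{lem:general-second-order} for the diagonal terms. Write $\mM_k \defeq \mS_k(\mS_k^\transp \mhSigma \mS_k + \lambda \mI_q)^{-1}\mS_k^\transp$, so that $\vhbeta_\lambda^k = \mM_k \tfrac1n\mX^\transp\vy$, $\vhbeta_\lambda^\ens = \overline{\mM}\,\tfrac1n\mX^\transp\vy$ with $\overline{\mM} = \tfrac1K\sum_k\mM_k$, and $\mL_\lambda^k = \mX\mM_k\tfrac1n\mX^\transp$; and write $\mM_\mu \defeq (\mhSigma+\mu\mI_p)^{-1}$ for the ordinary ridge resolvent at the implicit level $\mu$ solving \eqref{eq:fp-dual-sketch}, so that $\vhbeta_\mu^\ridge = \mM_\mu\tfrac1n\mX^\transp\vy$ and $\mL_\mu^\ridge = \mX\mM_\mu\tfrac1n\mX^\transp$. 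In the limit, $\mu$ (hence $\mM_\mu$) depends only on $\mhSigma$ and the limiting S-transform $\Sxf_{\mS\mS^\transp}$, so it is common to all ensemble members and independent of each individual $\mS_k$. The hypotheses $\lambda > \lambda_0$ and the boundedness of $\norm[\mathrm{op}]{\mSigma}$ and $\EE[y_0^2]$ ensure $\norm[\mathrm{op}]{\mM_k}$, $\norm[\mathrm{op}]{\mM_\mu}$, $\norm[2]{\tfrac1n\mX^\transp\vy}$, $\norm[\tr]{\tfrac1n\mhSigma}$, $\norm[2]{\EE[\vx_0 y_0]}$ are all $\mathcal O(1)$, which makes the ``test'' matrices below admissible in the definition of $\asympequi$; no further data assumptions (i.e.\ not \Cref{asm:data}) are needed here.

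For the squared risk, expanding the square gives $R(\vhbeta) = \EE[y_0^2] - 2\,\EE[\vx_0 y_0]^\transp\vhbeta + \vhbeta^\transp\mSigma\vhbeta$, so it suffices to compare the linear and quadratic pieces of $\vhbeta_\lambda^\ens$ to those of $\vhbeta_\mu^\ridge$. The linear piece is $\EE[\vx_0 y_0]^\transp\vhbeta_\lambda^\ens = \tfrac1K\sum_k\tr[(\tfrac1n\mX^\transp\vy\,\EE[\vx_0 y_0]^\transp)\mM_k] \asympequi \EE[\vx_0 y_0]^\transp\mM_\mu\tfrac1n\mX^\transp\vy = \EE[\vx_0 y_0]^\transp\vhbeta_\mu^\ridge$ by \Cref{thm:sketched-pseudoinverse} (with no $K$-dependence). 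For the quadratic piece $\vhbeta_\lambda^{\ens\transp}\mSigma\vhbeta_\lambda^\ens = \tfrac1{K^2}\sum_{k,\ell}\tfrac1{n^2}\vy^\transp\mX\mM_k\mSigma\mM_\ell\mX^\transp\vy$, split into the $K(K-1)$ off-diagonal and $K$ diagonal terms. For $k\ne\ell$, condition on everything but $\mS_k$ and apply $\mM_k\asympequi\mM_\mu$ against $\mSigma\mM_\ell\tfrac1{n^2}\mX^\transp\vy\vy^\transp\mX$, then condition on everything but $\mS_\ell$ and apply $\mM_\ell\asympequi\mM_\mu$, giving $\tfrac1{n^2}\vy^\transp\mX\mM_k\mSigma\mM_\ell\mX^\transp\vy \asympequi \tfrac1{n^2}\vy^\transp\mX\mM_\mu\mSigma\mM_\mu\mX^\transp\vy = \vhbeta_\mu^{\ridge\transp}\mSigma\vhbeta_\mu^\ridge$. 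For $k=\ell$, \Cref{lem:general-second-order} with $\mPsi=\mSigma$ gives $\mM_k\mSigma\mM_k\asympequi\mM_\mu(\mSigma+\mu'_\mSigma\mI_p)\mM_\mu$, where $\mu'_\mSigma$ is precisely the quantity $\mu'$ of \eqref{eq:mu'-proof} (this is \eqref{eq:mu'_mPsi} at $\mPsi=\mSigma$); testing the extra operator term $\mu'_\mSigma\mM_\mu^2$ against $\tfrac1{n^2}\mX^\transp\vy\vy^\transp\mX$ produces $\mu'\Delta$ with $\Delta = \tfrac1{n^2}\vy^\transp\mX\mM_\mu^2\mX^\transp\vy$, which is put into the $n$-dimensional resolvent form of the statement via the push-through identity $\mX(\mhSigma+\mu\mI_p)^{-1}=(\tfrac1n\mX\mX^\transp+\mu\mI_n)^{-1}\mX$. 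Collecting the two blocks with their $K^{-2}$ weights gives $\vhbeta_\lambda^{\ens\transp}\mSigma\vhbeta_\lambda^\ens\asympequi\vhbeta_\mu^{\ridge\transp}\mSigma\vhbeta_\mu^\ridge + \tfrac{\mu'\Delta}{K}$, hence the first equivalence in \eqref{eq:risk-gcv-decomp-appendix}; nonnegativity of $\mu'$ follows since $-\,(\partial\mu/\partial\lambda)\,\lambda^2\,\Sxf_{\mS\mS^\transp}'\ge0$ (by monotonicity of $\mu$ in $\lambda$ from \Cref{thm:sketched-pseudoinverse} and $\Sxf_{\mS\mS^\transp}$ decreasing) and $\tfrac1p\tr[\mSigma\mM_\mu^2]\ge0$.

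For the GCV estimator, write $\hR(\vhbeta_\lambda^\ens) = \widehat N_\ens/\widehat D_\ens$ with $\widehat N_\ens = \tfrac1n\norm[2]{\vy-\mX\vhbeta_\lambda^\ens}^2$ and $\widehat D_\ens = (1-\tfrac1n\tr[\mL_\lambda^\ens])^2$. For the denominator, $\tfrac1n\tr[\mL_\lambda^k] = \tr[(\tfrac1n\mhSigma)\mM_k]\asympequi\tr[(\tfrac1n\mhSigma)\mM_\mu] = \tfrac1n\tr[\mhSigma\mM_\mu]$ uniformly in $k$ (using $\norm[\tr]{\tfrac1n\mhSigma}=\mathcal O(1)$), so $\widehat D_\ens\asympequi(1-\tfrac1n\tr[\mhSigma\mM_\mu])^2 = \widehat D_\ridge$, the denominator of \eqref{eq:mu''-proof}, which for $\lambda>\lambda_0$ is bounded away from $0$ (away from the exceptional point where, as for ordinary ridge GCV, one takes the analytic continuation). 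For the numerator, $\widehat N_\ens = \tfrac1n\vy^\transp\vy - 2\,\vhbeta_\lambda^{\ens\transp}\tfrac1n\mX^\transp\vy + \vhbeta_\lambda^{\ens\transp}\mhSigma\vhbeta_\lambda^\ens$ has the same structure as the risk with $\EE[\vx_0 y_0]$ and $\mSigma$ replaced by $\tfrac1n\mX^\transp\vy$ and $\mhSigma$; repeating the argument above — now invoking \Cref{lem:general-second-order} with $\mPsi=\mhSigma$, which has bounded operator norm by \Cref{cond:sketch}, for the diagonal term — gives $\widehat N_\ens\asympequi\tfrac1n\norm[2]{\vy-\mX\vhbeta_\mu^\ridge}^2 + \tfrac{\mu'_{\mhSigma}\Delta}{K}$, where $\mu'_{\mhSigma}$ is the numerator factor in \eqref{eq:mu''-proof}. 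Dividing by $\widehat D_\ens$ yields $\hR(\vhbeta_\lambda^\ens)\asympequi\hR(\vhbeta_\mu^\ridge) + \tfrac1K\,\tfrac{\mu'_{\mhSigma}}{\widehat D_\ridge}\,\Delta$, i.e.\ the second equivalence in \eqref{eq:risk-gcv-decomp-appendix} with $\mu''=\mu'_{\mhSigma}/\widehat D_\ridge\ge0$, matching \eqref{eq:mu''-proof}.

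The main obstacle is the careful handling of the cross terms. One must verify that, after conditioning, the intermediate test matrices (for example $\mSigma\mM_\ell\tfrac1{n^2}\mX^\transp\vy\vy^\transp\mX$, and then $\tfrac1{n^2}\mX^\transp\vy\vy^\transp\mX\mM_\mu\mSigma$) genuinely have uniformly bounded trace norm and are independent of the sketch being integrated out, so that the almost-sure conclusion of \Cref{thm:sketched-pseudoinverse} transfers with the randomness of $\mX,\vy$ and the remaining sketches frozen (a routine Fubini argument), and that the finitely many equivalences can be combined — harmless since $K$ is fixed. The second delicate point is the uniform spectral control underlying all the $\mathcal O(1)$ norm bounds: this is immediate for $\lambda>0$ but requires tracking $\lambda_0$ and the fixed point \eqref{eq:fp-dual-sketch} when $\lambda\in(\lambda_0,0]$, since then $\mS_k^\transp\mhSigma\mS_k+\lambda\mI_q$ and $\mhSigma+\mu\mI_p$ are positive only on the relevant ranges. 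The remaining work — expanding the squares, the combinatorics of the $K^{-2}$ weights, the push-through identity, and matching $\mu'_\mSigma$ and $\mu'_{\mhSigma}$ to the displayed formulas — is mechanical.
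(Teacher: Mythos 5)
Your proposal is correct and follows essentially the same route as the paper: split the quadratic forms over the $K^2$ pairs of sketches, dispatch the $k\neq\ell$ cross terms with the first-order equivalence of \Cref{thm:sketched-pseudoinverse} and the $K$ diagonal terms with the second-order equivalence of \Cref{lem:general-second-order} (with $\mPsi=\mSigma$ for the risk and $\mPsi=\mhSigma$ for the GCV numerator), and reduce the GCV denominator by the first-order trace equivalence — this is exactly the content of the paper's \Cref{lem:quad-risk-ensemble} and the two-part proof of \Cref{thm:risk-gcv-asymptotics-appendix}. The only cosmetic difference is that the paper centers at the projection parameter $\vbeta_0=\mSigma^{-1}\EE[\vx_0 y_0]$ and carries the residual variance $\sigma^2$, whereas you expand $R$ directly in $\EE[y_0^2]$, $\EE[\vx_0 y_0]$, and $\mSigma$; the two expansions are algebraically identical.
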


\begin{proof}
The core component of the proof is \Cref{lem:quad-risk-ensemble}.
We shall break down the proof into two parts: risk asymptotics and GCV asymptotics. 
Before proceeding, let us introduce some essential notation first.

\emph{Notation}:
We decompose the unknown response $y_0$ into its linear predictor and residual. 
Specifically, let $\bbeta_0$ be the optimal projection parameter given by $\bbeta_0 = \bSigma^{-1} \EE[\bx_0 y_0]$. 
Then, we can express the response as a sum of its best linear predictor, $\bx^\top \bbeta_0$, and the residual, $y_0 - \bx_0^\top \bbeta_0$. 
Denote the variance of this residual by $\sigma^2 = \EE[(y_0 - \bx_0^\top \bbeta_0)^2]$.

\paragraph{Part 1: Risk asymptotics.\hspace{-0.5em}}

It is easy to see that the risk decomposes as follows:
\[
    R(\vhbeta^\ens_\lambda)
    = \EE\big[ (y_0 - \bx_0^\top \vhbeta_{\lambda}^\ens)^2 \mid \bX, \by, (\bS_k)_{k=1}^{k}\big]
    = 
    (\vhbeta^\ens_\lambda - \bbeta_0)^\top 
    \bSigma 
    (\vhbeta^\ens_\lambda - \bbeta_0)
    + \sigma^2.
\]
Here, we used the fact that $(y_0 - \bx_0^\top \bbeta_0)$ is uncorrelated with $\bx_0$, that is $\EE[\bx_0 (y_0 - \bx_0^\top \bbeta_0)] = \bm{0}_p$.
We note that $\| \beta_0 \|_2 < \infty$ and $\bSigma$ has uniformly bounded operator norm from \Cref{asm:data}.
Applying \Cref{lem:quad-risk-ensemble} then yields:
\begin{align}
    R(\vhbeta_{\lambda}^\ens)
    = 
    (\vhbeta^\ens_\lambda - \bbeta_0)^\top
    \bSigma
    (\vhbeta^\ens_\lambda - \bbeta_0) + \sigma^2
    &\asympequi
    (\vhbeta_{\mu}^{\ridge} - \bbeta_0)^\top
    \bSigma
    (\vhbeta_{\mu}^\ridge - \bbeta_0) + \sigma^2
    + \frac{\mu' \Delta}{K} \\
    &=
    R(\vhbeta_{\mu}^{\ridge})
    + \frac{\mu' \Delta}{K},
\end{align}
where $\mu'$ is as defined in \eqref{eq:mu'-proof}.
This completes the first part of the proof for the risk asymptotics decomposition.

\paragraph{Part 2: GCV asymptotics.\hspace{-0.5em}}
We will work on the numerator and denominator asymptotics separately, and combine them to get the final expression for the GCV asymptotics.

\emph{Numerator:}
We start with the numerator.
Similar decomposition as the risk yields
\begin{align}
    \tfrac{1}{n}
    \| \vy - \mX \vhbeta_{\lambda}^\ens \|_2^2
    &= \tfrac{1}{n} \| \mX (\vbeta_0 - \vhbeta_{\lambda}^{\ens}) + (\vy - \mX \vbeta_0) \|_2^2 \\
    &=
    \tfrac{1}{n} \| \mX (\bbeta_0 - \vhbeta_{\lambda}^{\ens}) \|_2^2
    + \tfrac{1}{n} \| (\vy - \mX \bbeta_0) \|_2^2
    + \tfrac{2}{n}
    (\vy - \mX \vbeta_0)^\top
    \mX (\vbeta_0 - \vhbeta_{\lambda}^{\ens}).
\end{align}
From \Cref{lem:general-first-order}, note that
\[
    \tfrac{2}{n}
    (\vy - \mX \bbeta_0)^\top
    \mX (\bbeta_0 - \vhbeta_{\lambda}^{\ens})
    \asympequi
    \tfrac{2}{n}
    (\vy - \mX \bbeta_0)^\top
    \mX (\bbeta_0 - \vhbeta_{\mu}^{\ridge}).
\]
Next we expand
\begin{align}
    \tfrac{1}{n}
    \| \mX (\bbeta_0 - \vhbeta_{\lambda}^{\ens}) \|_2^2
    &= 
    (\vhbeta_{\lambda}^{\ens} - \bbeta_0)^\top
    \tfrac{1}{n} \mX^\top \mX
    (\vhbeta_{\lambda}^{\ens} - \bbeta_0) \\
    &\asympequi
    (\vhbeta_{\mu}^{\ridge} - \bbeta_0)^\top
    \tfrac{1}{n} \mX^\top \mX
    (\vhbeta_{\mu}^{\ridge} - \bbeta_0)
    + \frac{\mu''_{\mathrm{num}} \Delta}{K},
\end{align}
where $\mu''_{\mathrm{num}}$ is given by:
\[
    \mu''_{\mathrm{num}}
    = 
    - \frac{\partial \mu}{\partial \lambda}
    \lambda^2
    \Sxf_{\mS \mS^\transp}'\big(- \tfrac{1}{p}\tr\big[\tfrac{1}{n} \mX^\transp \mX \biginv{\tfrac{1}{n} \mX^\transp \mX + \mu \mI_p}\big]\big)
    \tfrac{1}{p} \tr\big[\tfrac{1}{n} \mX^\top \mX \biginv[2]{\tfrac{1}{n} \mX^\transp \mX + \mu \mI_p}\big].
\]
Now appealing to \Cref{lem:quad-risk-ensemble}, we have
\begin{align}
    \tfrac{1}{n}
    \| \vy - \mX \vhbeta_{\lambda}^{\ens} \|_2^2
    &\asympequi
    (\vhbeta_{\mu}^\ridge - \bbeta_0)^\top
    \tfrac{1}{n} \mX^\top \mX
    (\vhbeta_{\mu}^\ridge - \bbeta_0)
    + \tfrac{2}{n} (\vy - \mX \bbeta_0)^\top
    \mX (\bbeta_0 - \vhbeta_{\mu}^{\ridge}) \nonumber \\
    &\quad +
    \tfrac{1}{n} \| (\vy - \mX \bbeta_0) \|_2^2
    + \frac{\mu''_{\mathrm{num}} \Delta}{K}. \label{eq:ensemble-squared-decomp-dual-1}
\end{align}
Note also that
\begin{align}
    \tfrac{1}{n}
    \| \vy - \mX \vhbeta_\mu^\ridge  \|_2^2
    &=  \tfrac{1}{n}
    \| (\vy - \mX \bbeta_0) 
    + \mX (\bbeta_0 - \bhbeta_{\mu}^{\ridge}) \|_2^2 \nonumber \\
    &=
    \tfrac{1}{n}
    \| \vy - \mX \bbeta_0 \|_2^2
    + \tfrac{2}{n}
    (\vy - \mX \bbeta_0)^\top
    \mX (\bbeta_0 - \vhbeta_{\mu}^{\ridge})
    + \tfrac{1}{n} \| \mX (\bbeta_0 - \vhbeta_{\mu}^{\ridge}) \|_2^2. \label{eq:ensemble-squared-decomp-dual-2}
\end{align}
Combining \eqref{eq:ensemble-squared-decomp-dual-1} and \eqref{eq:ensemble-squared-decomp-dual-2}, we arrive at the following asymptotic decomposition for the numerator:
\begin{equation}
    \label{eq:gcv-numerator-asympequi}
    \tfrac{1}{n}
    \| \vy - \mX \vhbeta_{\lambda}^{\ens} \|_2^2
    \asympequi
    \tfrac{1}{n}
    \| \vy - \mX \vhbeta_{\mu}^{\ridge} \|_2^2
    + \frac{\mu''_{\mathrm{num}} \Delta}{K}.
\end{equation}

\emph{Denominator:}
Next we work on the denominator.
For the ensemble smoothing matrix, observe that
\begin{align}
    \mL^\ens_{\lambda}
    = 
    \frac{1}{K}
    \sum_{k=1}^{K}
    \tfrac{1}{n} \mX \mS_k 
    (\tfrac{1}{n} \mS_k^\ctransp \mX^\ctransp \mX \mS_k + \lambda \mI_q)^{-1}
    \mS_k^\ctransp \mX^\ctransp
    \asympequi \tfrac{1}{n} 
    \mX (\tfrac{1}{n} \mX^\ctransp \mX + \mu \mI_p)^{-1} \mX^\ctransp,
\end{align}
where we used \Cref{lem:general-first-order} to write the asymptotic equivalence in the last line.
Thus, we have
\begin{equation}
    \label{eq:gcv-denominator-asympequi}
    \tr[\mL^\ens_{\lambda}]
    \asympequi \tr[(\tfrac{1}{n} \mX^\top \mX + \mu \mI_p)^{-1} \tfrac{1}{n} \mX^\top \mX]
    = \tr[\mL^\ridge_{\mu}].
\end{equation}

Therefore, combining \eqref{eq:gcv-numerator-asympequi} and \eqref{eq:gcv-denominator-asympequi}, for the GCV estimator, we obtain
\begin{align}
    \hR(\vhbeta_{\lambda}^{\ens})
    = 
    \frac{\tfrac{1}{n} \| \vy - \mX \vhbeta_{\lambda}^{\ens} \|_2^2}{(1 - \tfrac{1}{n} \tr[\mL_{\lambda}^{\ens}])^2}
    \asympequi
    \ddfrac{\tfrac{1}{n} \| \vy - \mX \vhbeta_{\mu}^{\ridge} \|_2^2 + \frac{\mu''_{\mathrm{num}} \Delta}{K} }{(1 - \tfrac{1}{n} \tr[\mL_{\lambda}^{\ens}])^2}
    &\asympequi
    \ddfrac{\tfrac{1}{n} \| \vy - \mX \vhbeta_{\mu}^{\ridge} \|_2^2 + \frac{\mu''_{\mathrm{num}} \Delta}{K} }{(1 - \tfrac{1}{n} \tr[\mL_{\mu}^{\ridge}])^2} \\
    &=
    \hR(\vhbeta_{\mu}^{\ridge})
    + \frac{\mu'' \Delta}{K},
\end{align}
where $\mu''$ is as defined in \eqref{eq:mu''-proof}.
This finishes the second part of the proof for the GCV asymptotics decomposition and concludes the proof.
\end{proof}

\subsubsection*{Helper lemma for the proof of \Cref{thm:risk-gcv-asymptotics}}

\bigskip

\begin{lemma}
    [Quadratic risk decomposition for the ensemble estimator for feature sketch]
    \label{lem:quad-risk-ensemble}
    Assume the conditions of \Cref{lem:general-second-order}.
    Let $\mPsi$ be any positive semidefinite matrix
    with uniformly bounded operator norm,
    that is independent of $( \mS_k )_{k=1}^{K}$.
    Let $\bbeta_0 \in \RR^{p}$ be any vector with 
    uniformly bounded Euclidean norm,
    that is independent of $( \mS_k )_{k=1}^{K}$.
    Then, 
    under \Cref{asm:data,cond:sketch},
    for $\lambda > \lambda_0$
    and all $K$,
    \[
        (\vhbeta_{\lambda}^{\ens} - \bbeta_0)^\top
        \mPsi
        (\vhbeta_{\lambda}^{\ens} - \bbeta_0)
        \asympequi
        (\vhbeta_{\mu}^{\ridge} - \bbeta_0)^\top
        \mPsi
        (\vhbeta_{\mu}^{\ridge} - \bbeta_0)
        + \frac{\mu_{\mPsi}' \Delta}{K},
    \]
    where 
    $\mu$ is as defined in \eqref{eq:fp-dual-sketch},
    $\mu_{\mPsi}'$ is as defied in \eqref{eq:mu'_mPsi},
    and $\Delta$ is as defined in \Cref{thm:risk-gcv-asymptotics-appendix}.
\end{lemma}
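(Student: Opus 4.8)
The plan is to expand the ensemble quadratic form over pairs of component estimators, split it into diagonal and off-diagonal terms, and control each piece with the first- and second-order freely sketched resolvent equivalences of \Cref{lem:general-first-order,lem:general-second-order}. For the setup, write $\mM_k = \mS_k(\mS_k^\transp \mhSigma \mS_k + \lambda \mI_q)^{-1} \mS_k^\transp$ and $\vr = \tfrac1n \mX^\transp \vy$, so that $\vhbeta_\lambda^k = \mM_k \vr$ and $\vhbeta_\mu^\ridge = (\mhSigma + \mu \mI_p)^{-1} \vr$. Under \Cref{asm:data}, $\|\vr\|_2$ is almost surely uniformly bounded in $n$: the entries of $\vz$ form an orthonormal system in $L^2$, so Bessel's inequality bounds $\|\EE[\vx_0 y_0]\|_2$, and the $(4+\delta)$-moment hypotheses on $\vz$ and $y_0$ control $\tfrac1n \EE[\|\vx_0\|_2^2 y_0^2]$, yielding $\EE\|\vr\|_2^2 = O(1)$ and the a.s.\ bound by concentration. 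Hence $\vr\vr^\transp$ and $\vr \bbeta_0^\transp \mPsi$ have uniformly bounded trace norm; moreover for $\lambda > \lambda_0$ one has $\mu > -\lambdaminnz(\mhSigma)$ by \Cref{thm:sketched-pseudoinverse}, so $\|(\mhSigma + \mu \mI_p)^{-1}\|_{\mathrm{op}}$, $\|\mM_k\|_{\mathrm{op}}$, $\|\vhbeta_\mu^\ridge\|_2$ and each $\|\vhbeta_\lambda^l\|_2$ are uniformly bounded a.s.

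For a diagonal term I would write $(\vhbeta_\lambda^k - \bbeta_0)^\transp \mPsi (\vhbeta_\lambda^k - \bbeta_0) = \vr^\transp \mM_k \mPsi \mM_k \vr - 2 \bbeta_0^\transp \mPsi \mM_k \vr + \bbeta_0^\transp \mPsi \bbeta_0$. Conditioning on $(\mX, \vy)$ — so that $\mPsi$ and the parameter $\mu'_\mPsi$ of the equivalent in \eqref{eq:mu'_mPsi} are deterministic — and applying \Cref{lem:general-second-order} to $\mM_k \mPsi \mM_k$ and \Cref{lem:general-first-order} (equivalently \Cref{thm:sketched-pseudoinverse}) to $\mM_k$, then contracting against the bounded-trace-norm matrices $\vr\vr^\transp$ and $\vr \bbeta_0^\transp \mPsi$, gives
\[
(\vhbeta_\lambda^k - \bbeta_0)^\transp \mPsi (\vhbeta_\lambda^k - \bbeta_0)
\asympequi (\vhbeta_\mu^\ridge - \bbeta_0)^\transp \mPsi (\vhbeta_\mu^\ridge - \bbeta_0) + \mu'_\mPsi \, \vr^\transp (\mhSigma + \mu \mI_p)^{-2} \vr .
\]
The push-through identity $(\mhSigma + \mu \mI_p)^{-1} \mX^\transp = \mX^\transp(\tfrac1n \mX\mX^\transp + \mu \mI_n)^{-1}$ rewrites the residual factor $\vr^\transp (\mhSigma + \mu \mI_p)^{-2} \vr$ in the dual form identified with $\Delta$. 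Since the $K$ diagonal summands have a common deterministic limit, their $K^{-2}$-weighted sum is $\asympequi K^{-1}\bigl[ (\vhbeta_\mu^\ridge - \bbeta_0)^\transp \mPsi (\vhbeta_\mu^\ridge - \bbeta_0) + \mu'_\mPsi \Delta \bigr]$.

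For an off-diagonal term ($k \neq l$), $\mS_k$ and $\mS_l$ are independent, so I would proceed in two steps through the conditioning calculus rule for asymptotic equivalents. Conditioning on $(\mX, \vy, \mS_l)$ and applying \Cref{lem:general-first-order} to $\mM_k$ — legitimate since $\mPsi(\vhbeta_\lambda^l - \bbeta_0)$ is then fixed with a.s.\ bounded norm — gives $(\vhbeta_\lambda^k - \bbeta_0)^\transp \mPsi (\vhbeta_\lambda^l - \bbeta_0) \asympequi (\vhbeta_\mu^\ridge - \bbeta_0)^\transp \mPsi (\vhbeta_\lambda^l - \bbeta_0)$; then conditioning on $(\mX, \vy)$ and applying it again to $\mM_l$, now with $\mPsi(\vhbeta_\mu^\ridge - \bbeta_0)$ fixed, reaches $(\vhbeta_\mu^\ridge - \bbeta_0)^\transp \mPsi (\vhbeta_\mu^\ridge - \bbeta_0)$. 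Chaining these (valid for fixed $K$ with all quantities bounded), the $K^{-2}$-weighted off-diagonal sum is $\asympequi (1 - K^{-1})(\vhbeta_\mu^\ridge - \bbeta_0)^\transp \mPsi (\vhbeta_\mu^\ridge - \bbeta_0)$. Adding the diagonal and off-diagonal contributions gives the claimed identity.

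I expect the main obstacle to be the off-diagonal step: the two successive sketch replacements must be routed through conditioning, and one must verify that the intermediate, data-dependent but sketch-independent vectors $\mPsi(\vhbeta_\lambda^l - \bbeta_0)$ and $\mPsi(\vhbeta_\mu^\ridge - \bbeta_0)$ remain uniformly bounded in norm almost surely — which is precisely where boundedness of $\|\bbeta_0\|_2$, $\|\vr\|_2$, and (for possibly negative $\lambda > \lambda_0$) $\|\mM_k\|_{\mathrm{op}}$ all get used, and the last of these relies on the spectral control coming from $\lambda > \lambda_0$ rather than $\lambda > 0$. The reason the inflation is $O(1/K)$ is structural: only the $K$ diagonal terms invoke the \emph{second}-order equivalence and so pick up the $\mu'_\mPsi \mI_p$ correction, whereas the $K(K-1)$ off-diagonal terms see only the first-order equivalence; carefully matching the $1/K^2$ weights against these counts is what produces the $\mu'_\mPsi \Delta / K$ term.
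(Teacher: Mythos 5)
Your proposal follows essentially the same route as the paper's proof: expand the ensemble quadratic form over pairs of component estimators, kill the off-diagonal ($k \neq \ell$) terms' inflation by two successive applications of the first-order equivalence (\Cref{lem:general-first-order}) exploiting independence of $\mS_k$ and $\mS_\ell$, and extract the $\mu'_{\mPsi}$ inflation from the $K$ diagonal terms via the second-order equivalence (\Cref{lem:general-second-order}) contracted against $\vy\vy^\transp$, with the $1/K^2$ weighting against the $K$ versus $K(K-1)$ counts producing the $1/K$ rate. The argument is correct and matches the paper's, differing only in bookkeeping (you fold the $\bbeta_0$ cross-terms into each pair rather than separating them out) and in supplying somewhat more explicit boundedness justifications.
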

\begin{proof}   
We start with a decomposition.
Observe that
\begin{align}
    &(\vhbeta^\ens_\lambda - \bbeta_0)^\top 
    \bPsi 
    (\vhbeta^\ens_\lambda - \bbeta_0) \\
    &= 
    \bigg( \frac{1}{K} \sum_{k=1}^{K} \vhbeta_\lambda^k - \bbeta_0 \bigg)^\top  
    \bPsi
    \bigg( \frac{1}{K} \sum_{k=1}^{K} \vhbeta_\lambda^k - \bbeta_0 \bigg)
    \\
    &=
    \frac{1}{K^2} \sum_{k, \ell=1}^K (\widehat{\vbeta}_\lambda^{k})^\ctransp \mPsi \widehat{\vbeta}_\lambda^{\ell} 
    - \frac{2}{K} \sum_{k=1}^{K}
    {\vbeta_0}^\ctransp \mSigma \widehat{\vbeta}_\lambda^{k} + {\vbeta_0}^\ctransp \mSigma \vbeta_0 \\
    &= \frac{1}{K^2} \sum_{k, \ell=1}^K (\widehat{\vbeta}_\lambda^{k})^\ctransp \mPsi \widehat{\vbeta}_\lambda^{\ell} - (\widehat{\vbeta}_{\mu}^{\ridge})^\ctransp \mPsi \widehat{\vbeta}_{\mu}^\ridge 
    + ({\widehat{\vbeta}_{\mu}^{\ridge})^\ctransp \mPsi \widehat{\vbeta}_{\mu}^\ridge - \frac{2}{K} \sum_{k=1}^{K} {\vbeta_0}^\ctransp \mPsi \widehat{\vbeta}_\lambda^{k}+ {\vbeta_0}^\ctransp \mPsi \vbeta_0}.
\end{align}
By \Cref{lem:general-first-order}, note that
\[
    \frac{1}{K}
    \sum_{k = 1}^{K} \vhbeta_{\lambda}^{k}
    \asympequi \vhbeta_{\mu}^{\ridge}.
\]
Similarly, by two applications of \Cref{lem:general-first-order}, 
we know that $(\widehat{\vbeta}_\lambda^{k})^{\ctransp} \mPsi \widehat{\vbeta}_\lambda^{\ell} - (\widehat{\vbeta}_{\mu}^{\ridge})^{\ctransp} \mPsi \widehat{\vbeta}_{\mu}^\ridge \asto 0$ when $k \neq \ell$ since $\mS_k$ and $\mS_\ell$ are independent. 
The remaining $K$ terms where $k = \ell$ converge identically, so
\begin{align}
    (\vhbeta_{\lambda}^{\ens} - \bbeta_0)^\top
    \mPsi
    (\vhbeta_{\lambda}^{\ens} - \bbeta_0)
     - \paren{(\vhbeta_{\mu}^{\ridge} - \bbeta_0)^\top
        \mPsi
        (\vhbeta_{\mu}^{\ridge} - \bbeta_0) + \frac{1}{K} \bigparen{\widehat{\vbeta}_\lambda^\ctransp \mPsi \widehat{\vbeta}_\lambda - (\widehat{\vbeta}_{\mu}^{\ridge})^{\ctransp} \mPsi \widehat{\vbeta}_{\mu}^\ridge}} \asto 0.
\end{align} 

Thus, it suffices to evaluate the difference $\widehat{\vbeta}_\lambda^\ctransp \mPsi \widehat{\vbeta}_\lambda - (\widehat{\vbeta}_{\mu}^{\ridge})^{\ctransp} \mPsi \widehat{\vbeta}_{\mu}^\ridge$, which we will do next.

By linearity of the trace, we have
\begin{gather}
    \widehat{\vbeta}_\lambda^\ctransp \mPsi \widehat{\vbeta}_\lambda - (\widehat{\vbeta}_{\mu}^{\ridge})^{\ctransp} \mPsi \widehat{\vbeta}_{\mu}^\ridge = \tfrac{1}{n} \tr \bracket{\bigparen{\mX_\lambda^{\ddagger\ctransp} \mPsi \mX_\lambda^\ddagger - \mX_\lambda^{\dagger\ctransp} \mPsi \mX_\lambda^\dagger} \vy \vy^\ctransp},
\end{gather}
where 
\[
    \mX_\lambda^\ddagger
    = \tfrac{1}{\sqrt{n}} \mS \big( \tfrac{1}{n} \mS^\transp \mX^\transp \mX \mS + \lambda \mI_q \big)^{-1} \mS^\transp \mX^\transp
\text{ and }
    \mX_{\lambda}^\dagger \defeq \tfrac{1}{\sqrt{n}} \inv{\tfrac{1}{n} \mX^\ctransp \mX + \mu \mI_p} \mX^\ctransp.
\]
The result now follows by evaluating the second-order asymptotic equivalences from \Cref{lem:general-second-order}.
This concludes the proof.
\end{proof}

\subsection{Proof of \Cref{thm:gcv-consistency}}

The main ingredient of the proof will be \Cref{lem:dual-mu'-mu''-matching}.
Comparing the expressions of $\mu'$ and $\mu''$, it suffices to show the following equivalence:
\[
    \tfrac{1}{p} \tr\big[\mSigma \biginv[2]{\tfrac{1}{n} \mX^\transp \mX + \mu \mI_p}\big]
    \asympequi
    \frac{\tfrac{1}{p} \tr\big[ \tfrac{1}{n} \mX^\top \mX \biginv[2]{\tfrac{1}{n} \mX^\transp \mX + \mu \mI_p}\big]}{\left(1 - \tfrac{1}{n} \tr\big[\tfrac{1}{n} \bX^\top \bX (\tfrac{1}{n} \bX^\top \bX + \mu \bI_p)^{-1}\big] \right)^2}.
\]
We show this equivalence in \Cref{lem:dual-mu'-mu''-matching} to finish the proof.

A side remark that is worth stressing about the proof of \Cref{thm:gcv-consistency}: 
The inflation in both the test error and train errors are such that the same GCV denominator cancels them appropriately!
Thus, while one may expect that the GCV for infinite ensemble may work given the equivalence to the ridge regression, the fact that GCV works for a single instance of sketch is (even to the authors) quite remarkable!

\subsubsection*{Helper lemma for the proof of \Cref{thm:gcv-consistency}}

\bigskip

\begin{lemma}
    [Equivalence of risk and GCV inflation factors for feature sketch]
    \label{lem:dual-mu'-mu''-matching}
    Under \Cref{asm:data},
    \begin{equation}
        \label{eq:dual-mu'-mu''-matching-proof}
        (\tfrac{1}{n} \bX^\top \bX + \mu \bI_p)^{-1}
        \bSigma
        (\tfrac{1}{n} \bX^\top \bX + \mu \bI_p)^{-1}
        \asympequi
        \ddfrac{
            (\tfrac{1}{n} \bX^\top \bX + \mu \bI_p)^{-1}
            \tfrac{1}{n} \bX^\top \bX
            (\tfrac{1}{n} \bX^\top \bX + \mu \bI_p)^{-1}
        }{
            \left(1 - \tfrac{1}{n} \tr\big[\tfrac{1}{n} \bX^\top \bX (\tfrac{1}{n} \bX^\top \bX + \mu \bI_p)^{-1}\big] \right)^2
        }.
    \end{equation}
\end{lemma}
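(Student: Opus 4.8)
The plan is to pass both sides of \eqref{eq:dual-mu'-mu''-matching-proof} through the deterministic equivalents of \Cref{lem:standard-ridge-detequi}, reducing each to a scalar multiple of the common ``population'' matrix $(v\bSigma + \bI_p)^{-1}\bSigma(v\bSigma+\bI_p)^{-1}$, and then to check that the two scalar multiples agree. Throughout, write $\gamma = p/n$ for the aspect ratio and let $v$, $\tv_b$, $\tv_v$ be the quantities from \Cref{lem:standard-ridge-detequi} attached to the regularization level $\mu$; we work in the regime $\mu > -\lambdaminnz(\bhSigma)$ in which these are defined (which is exactly where this lemma is invoked), and when the right-hand side denominator vanishes both sides are read via analytic continuation, as elsewhere in the paper.

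First I would dispatch the GCV denominator. Using $\bhSigma(\bhSigma+\mu\bI_p)^{-1} = \bI_p - \mu(\bhSigma+\mu\bI_p)^{-1}$ together with $\tfrac1n = \gamma\tfrac1p$, one gets $1 - \tfrac1n\tr[\bhSigma(\bhSigma+\mu\bI_p)^{-1}] = 1 - \gamma + \gamma\,\tfrac1p\tr[\mu(\bhSigma+\mu\bI_p)^{-1}]$. The first-order equivalent \eqref{eq:detequi-ridge-firstorder} (tested against $\tfrac1p\bI_p$) gives $\tfrac1p\tr[\mu(\bhSigma+\mu\bI_p)^{-1}] \asympequi \tfrac1p\tr[(v\bSigma+\bI_p)^{-1}]$, hence $1 - \tfrac1n\tr[\bhSigma(\bhSigma+\mu\bI_p)^{-1}] \asympequi 1 - \gamma + \gamma\,\tfrac1p\tr[(v\bSigma+\bI_p)^{-1}]$. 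On the other hand, multiplying the fixed-point relation \eqref{eq:v-fixed-point} by $v$ and using $v\bSigma(v\bSigma+\bI_p)^{-1} = \bI_p - (v\bSigma+\bI_p)^{-1}$ yields exactly $v\mu = 1 - \gamma + \gamma\,\tfrac1p\tr[(v\bSigma+\bI_p)^{-1}]$. Therefore $1 - \tfrac1n\tr[\bhSigma(\bhSigma+\mu\bI_p)^{-1}] \asympequi v\mu$, so the right-hand denominator of \eqref{eq:dual-mu'-mu''-matching-proof} is asymptotically $(v\mu)^2$.

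Next I would apply the two second-order equivalents. By the population version \eqref{eq:detequi-ridge-var}, the left-hand side of \eqref{eq:dual-mu'-mu''-matching-proof} satisfies $(\bhSigma+\mu\bI_p)^{-1}\bSigma(\bhSigma+\mu\bI_p)^{-1} \asympequi \tfrac{1+\tv_b}{\mu^2}(v\bSigma+\bI_p)^{-1}\bSigma(v\bSigma+\bI_p)^{-1}$, while by the empirical version \eqref{eq:detequi-ridge-bias} the numerator on the right satisfies $(\bhSigma+\mu\bI_p)^{-1}\bhSigma(\bhSigma+\mu\bI_p)^{-1} \asympequi \tv_v(v\bSigma+\bI_p)^{-1}\bSigma(v\bSigma+\bI_p)^{-1}$. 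Dividing the latter by the denominator of Step 1 (using the quotient calculus rule for $\asympequi$, legitimate since all matrices in sight have uniformly bounded operator norm and $v\mu$ is bounded away from zero in the regime considered) shows the right-hand side of \eqref{eq:dual-mu'-mu''-matching-proof} is $\asympequi \tfrac{\tv_v}{(v\mu)^2}(v\bSigma+\bI_p)^{-1}\bSigma(v\bSigma+\bI_p)^{-1}$. It then remains only to verify the scalar identity $\tfrac{1+\tv_b}{\mu^2} = \tfrac{\tv_v}{v^2\mu^2}$, i.e. $v^2(1+\tv_b) = \tv_v$; but \eqref{eq:def-tvb-ridge} gives $1+\tv_b = v^{-2}\big(v^{-2} - \gamma\tfrac1p\tr[\bSigma^2(v\bSigma+\bI_p)^{-2}]\big)^{-1}$ and \eqref{eq:def-tvv-ridge} gives $\tv_v = \big(v^{-2} - \gamma\tfrac1p\tr[\bSigma^2(v\bSigma+\bI_p)^{-2}]\big)^{-1}$, so multiplying the first by $v^2$ produces the second. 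This closes the proof.

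The main obstacle is essentially just Step 1: one has to track the $\tfrac1n$-versus-$\tfrac1p$ normalization (the factor $\gamma$) with care, and recognize that the fixed-point equation \eqref{eq:v-fixed-point}, rescaled by $v$, reproduces precisely the quantity $v\mu$ that emerges after \eqref{eq:detequi-ridge-firstorder}. Everything after that is either a direct quotation of the second-order equivalents of \Cref{lem:standard-ridge-detequi} or the one-line manipulation $v^2(1+\tv_b)=\tv_v$ of \eqref{eq:def-tvb-ridge}--\eqref{eq:def-tvv-ridge}; the only other point needing a word is the legitimacy of dividing by the denominator inside an asymptotic equivalence, which is standard.
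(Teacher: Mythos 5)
Your proposal is correct and follows essentially the same route as the paper's proof: both reduce the two sides to scalar multiples of $(v\bSigma+\bI_p)^{-1}\bSigma(v\bSigma+\bI_p)^{-1}$ via the three parts of \Cref{lem:standard-ridge-detequi}, identify the GCV denominator with $v\mu$ through the rescaled fixed point \eqref{eq:v-fixed-point}, and match the resulting constants (your explicit identity $v^2(1+\tv_b)=\tv_v$ is exactly the cancellation the paper performs in its final chain of equalities). No gaps.
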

\begin{proof}
    We will first derive asymptotic equivalents for both the left-hand and right-hand sides of \eqref{eq:dual-mu'-mu''-matching-proof}.
    We will then show that the asymptotic equivalents match appropriately.
    
    \paragraph{Asymptotic equivalent for left-hand side.\hspace{-0.5em}}
    From the second part of \Cref{lem:standard-ridge-detequi}, we have
    \[
        \mu^2
        (\tfrac{1}{n} \bX^\top \bX + \mu \bI_p)^{-1}
        \bSigma
        (\tfrac{1}{n} \bX^\top \bX + \mu \bI_p)^{-1}
        \asympequi
        (1 + \tv_b)
        (v \bSigma + \bI_p)^{-1}
        \bSigma
        (v \bSigma + \bI_p)^{-1},
    \]
    where the parameter $\tv_b$ from \eqref{eq:def-tvb-ridge} is given by:
    \begin{equation}
        \tv_b
        =
        \ddfrac
        {
            \gamma \tfrac{1}{p} \tr[\bSigma^2 (v \bSigma + \bI_p)^{-2}]
        }
        {
            v^{-2}
            - \gamma \tfrac{1}{p} \tr[\bSigma^2 (v \bSigma + \bI_p)^{-2}].
        }
    \end{equation}
    Now, note that
    \[
        1 + \tv_b
        = \frac{v^{-2}}{v^{-2} + \gamma \tfrac{1}{p} \tr[\bSigma^2 (v \bSigma + \bI_p)^{-2}]},
    \]
    which leads to
    \[
        \frac{1 + \tv_b}{\mu^2} 
        = 
        \frac{1}{\mu^2}
        \frac{v^{-2}}{v^{-2} + \gamma \tfrac{1}{p} \tr[\bSigma^2 (v \bSigma + \bI_p)^{-2}]}.
    \]
    Thus,
    we have that
    \begin{align}
        &(\tfrac{1}{n} \bX^\top \bX + \mu \bI_p)^{-1}
        \bSigma
        (\tfrac{1}{n} \bX^\top \bX + \mu \bI_p)^{-1} \nonumber \\
        &\asympequi
        \frac{1}{\mu^2}
        \frac{v^{-2}}{v^{-2} + \gamma \tfrac{1}{p} \tr[\bSigma^2 (v \bSigma + \bI_p)^{-2}]}
        (v \bSigma + \bI_p)^{-1}
        \bSigma
        (v \bSigma + \bI_p)^{-1}. \label{eq:dual-matching-lhs-asympequi}
    \end{align}
    
    \paragraph{Asymptotic equivalent for right-hand
    side.\hspace{-0.5em}}
    From the third part of \Cref{lem:standard-ridge-detequi}, we have
    \begin{equation}
        \label{eq:dual-matching-rhs-numerator}
        (\tfrac{1}{n} \bX^\top \bX + \mu \bI_p)^{-1}
        \tfrac{1}{n} \bX^\top \bX
        (\tfrac{1}{n} \bX^\top \bX + \mu \bI_p)^{-1}
        \asympequi
        \tv_v
        (v \bSigma + \bI_p)^{-1}
        \bSigma
        (v \bSigma + \bI_p)^{-1},
    \end{equation}
    where the parameter $\tv_v$ from \eqref{eq:def-tvv-ridge} is given by:
    \begin{equation}
        \label{eq:dual-matching-rhs-numerator-fp}
        \tv_v
        = \frac{1}{v^{-2} - \gamma \tfrac{1}{p} \tr[\bSigma^2 (v \bSigma + \bI_p)^{-2}]}.
    \end{equation}
    From the first of \Cref{lem:standard-ridge-detequi},
    we have
    \begin{align}
        1 - \tfrac{1}{n} \tr[\tfrac{1}{n} \bX^\top \bX (\tfrac{1}{n} \bX^\top \bX + \mu \bI_p)^{-1}]
        &= 
        1 - \tfrac{1}{n} 
        \tr[\bI_p - \mu (\tfrac{1}{n} \bX^\transp \bX + \mu \bI_p)^{-1}] \nonumber \\
        &= 
        1 - \gamma \big(1 - \tfrac{1}{p} \mu \tr[(\tfrac{1}{n} \bX^\transp \bX + \mu \bI_p)^{-1}]\big) \nonumber \\
        &\asympequi
        1 - \gamma \big(1 - \tfrac{1}{p} \tr[(v \bSigma + \bI_p)^{-1}]\big) \nonumber \\
        &=
        1 - \gamma + \gamma \tfrac{1}{p}
        \tr[(v \bSigma + \bI_p)^{-1}]. \label{eq:dual-matching-rhs-denominator}
    \end{align}
    Combining \eqref{eq:dual-matching-rhs-numerator}, \eqref{eq:dual-matching-rhs-numerator-fp}, and \eqref{eq:dual-matching-rhs-denominator}, we obtain
    \begin{align}
        &\ddfrac{
            (\tfrac{1}{n} \bX^\top \bX + \mu \bI_p)^{-1}
            \tfrac{1}{n} \bX^\top \bX
            (\tfrac{1}{n} \bX^\top \bX + \mu \bI_p)^{-1}
        }{
            \left(1 - \tfrac{1}{n} \tr[\tfrac{1}{n} \bX^\top \bX (\tfrac{1}{n} \bX^\top \bX + \mu \bI_p)^{-1}] \right)^2
        } \nonumber \\
        &\asympequi
        \frac{1}{v^{-2} + \gamma \tfrac{1}{p} \tr[\bSigma^2 (v \bSigma + \bI_p)^{-2}]}
        \cdot
        \ddfrac{
        (v \bSigma + \bI_p)^{-1}
        \bSigma
        (v \bSigma + \bI_p)^{-1}
        }
        {\big(1 - \gamma + \gamma \tfrac{1}{p}
        \tr[(v \bSigma + \bI_p)^{-1}]\big)^2}. \label{eq:dual-matching-rhs-asympequi}
    \end{align}

    \paragraph{Matching of asymptotic equivalents.\hspace{-0.5em}}
    Note from the fixed-point equation \eqref{eq:v-fixed-point} that
    \[
        \mu
        = 
        v^{-1}
        - \gamma \tfrac{1}{p} \tr[\bSigma (v \bSigma + \bI_p)^{-1}].
    \]
    Multiplying by $v$ on both sides yields
    \begin{equation}
        \label{eq:dual-matching-fp-relation}
        \mu v
        = 1 - \gamma \tfrac{1}{p}
        \tr[v \bSigma (v \bSigma + \bI_p)^{-1}]
        = 1 - \gamma + \gamma \tfrac{1}{p}
        \tr[(v \bSigma + \bI_p)^{-1}].
    \end{equation}
    Thus, combining \eqref{eq:dual-matching-lhs-asympequi}, \eqref{eq:dual-matching-fp-relation}, and \eqref{eq:dual-matching-rhs-asympequi},
    we have
    \begin{align}
        &(\tfrac{1}{n} \bX^\top \bX + \mu \bI_p)^{-1}
        \bSigma
        (\tfrac{1}{n} \bX^\top \bX + \mu \bI_p)^{-1} \\
        &\asympequi
        \frac{1}{\mu^2}
        \frac{v^{-2}}{v^{-2} + \gamma \tfrac{1}{p} \tr[\bSigma^2 (v \bSigma + \bI_p)^{-2}]}
        (v \bSigma + \bI_p)^{-1}
        \bSigma
        (v \bSigma + \bI_p)^{-1} \\
        &=
        \frac{1}{(\mu v)^2}
        \frac{1}{v^{-2} + \gamma \tfrac{1}{p} \tr[\bSigma^2 (v \bSigma + \bI_p)^{-2}]}
        (v \bSigma + \bI_p)^{-1}
        \bSigma
        (v \bSigma + \bI_p)^{-1} \\
        &=
        \frac{1}{(1 - \gamma + \gamma \tfrac{1}{p}
        \tr[(v \bSigma + \bI_p)^{-1}])^2}
        \frac{1}{v^{-2} + \gamma \tfrac{1}{p} \tr[\bSigma^2 (v \bSigma + \bI_p)^{-2}]}
        (v \bSigma + \bI_p)^{-1}
        \bSigma
        (v \bSigma + \bI_p)^{-1} \\
        &\asympequi
        \ddfrac
        {
        (\tfrac{1}{n} \bX^\top \bX + \mu \bI_p)^{-1}
        \tfrac{1}{n} \bX^\top \bX
        (\tfrac{1}{n} \bX^\top \bX + \mu \bI_p)^{-1}
        }
        {
        \left(1 - \tfrac{1}{n} \tr[\tfrac{1}{n} \bX^\top \bX (\tfrac{1}{n} \bX^\top \bX + \mu \bI_p)^{-1}] \right)^2
        }.
    \end{align}
    In the chain above, the first equivalence follows from \eqref{eq:dual-matching-lhs-asympequi}, the penultimate equality follows from \eqref{eq:dual-matching-fp-relation}, and the final equivalence follows from \eqref{eq:dual-matching-rhs-asympequi}. 
    This finishes the proof.
\end{proof}

\section{Proofs in \Cref{sec:functional-consistency}}
\label{app:proofs-sec:functional-consistency}

\subsection{Proof of \Cref{thm:functional_consistency}}

As mentioned in the main paper, the idea of the proof is to exploit the close connection between GCV and LOOCV to prove the consistency for general functionals.
In particular, we will consider an intermediate functional, constructed based on LOO-reweighted residuals.
We will then connect the functional constructed based on GCV-reweighted residuals to that based on LOO-reweighted residuals.

\paragraph{Step 1: LOOCV consistency.\hspace{-0.5em}}
Let $\bX_{-i}$ denote the the feature matrix obtained by removing the $i$-th row from $\bX$,
and $\by_{-i}$ is the response vector obtained by removing the $i$-th entry from $\by$.
Let $\vhbeta^\ens_{-i,\lambda}$ denote the LOO ensemble estimator.
It is defined using $K$ constituent sketched LOO estimators $\vhbeta_{-i,\lambda}^k$ for $k \in [K]$ as follows:
\[
    \vhbeta^\ens_{-i,\lambda}
    = \frac{1}{K} \sum_{k=1}^{K} \vhbeta^k_{-i, \lambda},
    \quad
    \text{where}
    \quad
    \vhbeta^k_{-i,\lambda}
    =
    \tfrac{1}{n}
    \mS_k
    (\tfrac{1}{n} \mS_k \mX_{-i}^\top \mX_{-i} \mS_k + \lambda \mI_q)^{-1}
    \mS_k^\top
    \bX_{-i}^\top \by_{-i}.
\]
The LOOCV functional is the defined using the predictions of the LOO ensemble estimator as:
\begin{equation}
    \label{eq:loo-functional-original}
    \hT^{\mathrm{loo}}_{\lambda}
    =
    \frac{1}{n}
    \sum_{i=1}^{n}
    t(y_i, \bx_i^\top \bhbeta^{\ens}_{-i,\lambda}).
\end{equation}
It follows from the proof of Theorem 3 of \cite{patil2022estimating} that
$
    | R(\vhbeta_{\lambda}^\ens)
    - \hT^{\mathrm{loo}}_{\lambda} |
    \asto 0.
$
Next we will show that
$
    | \hT^{\mathrm{loo}}_{\lambda}
    - \hT_{\lambda} | \asto 0,
$
where $\hT_{\lambda}$ is the GCV functional.

\paragraph{Step 2: From LOOCV to GCV consistency.\hspace{-0.5em}}
To go from LOOCV to GCV, we first rewrite the LOO errors.
From Woodbury matrix identity, observe that
\begin{equation}
    y_i - \bx_i^\top \vhbeta^k_{-i,\lambda}
    = 
    \frac{y_i - \bx_i^\top \vhbeta^k_{\lambda}}
    {1 - [\mL_{\lambda}^k]_{ii}}.
\end{equation}
This is the so-called exact shortcut for LOO errors for ridge regression, that also works for sketched ridge regression.
In other words, we have
\[
    \bx_i^\top \vhbeta_{-i,\lambda}^{k}
    = y_i 
    - 
    \frac{y_i - \bx_{i}^\top \vhbeta_{\lambda}^k}{1 - [\mL_{\lambda}^k]_{ii}}.
\]
This implies that
\[
    \bx_i^\top \vhbeta_{-i,\lambda}^{\ens}
    = 
    y_i
    - 
    \frac{1}{K}
    \sum_{k=1}^{K}
    \frac{y_i - \bx_{i}^\top \vhbeta_{\lambda}^k}{1 - [\mL_{\lambda}^k]_{ii}}.
\]
Thus, we can write the LOO function \eqref{eq:loo-functional-original} in the shortcut form as follows:
\[
    \hT^\mathrm{loo}_{\lambda}
    = \frac{1}{n}
    \sum_{i=1}^{n}
    t\left(y_i, y_i - 
    \frac{1}{K}
    \sum_{k=1}^{K}
    \frac{y_i - \bx_i^\top \vhbeta_{\lambda}^k}{1 - [\mL_{\lambda}^k]_{ii}}\right).
\]
We will now bound the difference:
\begin{align}
    |\hT^{\mathrm{loo}}_\lambda
    - \hT_\lambda|
    &=
    \frac{1}{n}
    \sum_{i=1}^{n}
    \left|
    t\left(y_i, y_i - \frac{1}{K} \sum_{k=1}^K \frac{y_i - \bx_i^\top \vhbeta_{\lambda}^k}{1 - [\mL_{\lambda}^k]_{ii}}\right)
    - t\left(y_i, y_i - \frac{y_i - \bx_i^\top \vhbeta_{\lambda}^\ens}{1 - \tfrac{1}{n}\tr[\mL_{\lambda}^\ens]}\right)
    \right| \\
    &=
    \frac{1}{n}
    \sum_{i=1}^{n}
    \left|
    t\left(y_i, y_i - \frac{1}{K} \sum_{k=1}^K \frac{y_i - \bx_i^\top \vhbeta_{\lambda}^k}{1 - [\mL_{\lambda}^k]_{ii}}\right)
    - t\left(y_i, y_i - \frac{1}{K} \sum_{k=1}^{K} \frac{y_i - \bx_i^\top \vhbeta_{\lambda}^k}{1 - \tfrac{1}{n}\tr[\mL_{\lambda}^\ens]}\right)
    \right|.
\end{align}
The final equality follows from using the definition of the ensemble estimator \eqref{eq:ensemble-estimator}.
For notational simplicity, denote by:
\begin{itemize}
    \item 
    $r_i^k = y_i - \bx_i^\top \vhbeta_{\lambda}^k$
    for $k \in [K]$ and $i \in [n]$;
    \item
    $d_i^k = 1 - [\mL_\lambda^k]_{ii}$
    for $k \in [K]$ and $i \in [n]$, and $\bar{d} = 1 - \tfrac{1}{n} \tr[\mL^\ens_{\lambda}]$;
    \item
    $\bu_i = (y_i, y_i - \tfrac{1}{K} \sum_{k=1}^{K} \tfrac{r_i^k}{d_i^k})$,
    $\bv_i = (y_i, y_i - \tfrac{1}{K} \sum_{k=1}^{K} \tfrac{r_i^k}{\bar{d}})$.
\end{itemize}
Now, consider the desired difference:
\begin{align}
    |\hT^{\mathrm{loo}}_\lambda - \hT_\lambda|
    =
    \frac{1}{n}
    \sum_{i=1}^{n}
    \left|
        t(\bu_i) - t(\bv_i)
    \right|
    &\le
    \frac{1}{n}
    \sum_{i=1}^{n}
    L
    (1 + \| \bu_i \|_2 + \| \bv_i \|_2)
    (\| \bu_i - \bv_i \|_2) \nonumber \\
    &=
    \frac{1}{n}
    \sum_{i=1}^{n}
    L (1 + \| \bu_i \|_2 + \| \bv_i \|_2)
    \left|
    \frac{1}{K}
    \sum_{k=1}^{K}
    r_i^k
    \left(
    \frac{1}{d_i^k}
    - \frac{1}{\bar{d}}
    \right)
    \right|.
    \label{eq:loo-bound-1}
\end{align}
Above, we used \Cref{asm:test-error} in the inequality.
Using H\"older's inequality, we can bound
\begin{align}
    \left|
        \frac{1}{K}
        \sum_{k=1}^{K}
        r_i^k \left( \frac{1}{d_i^k} - \frac{1}{\bar{d}} \right)
    \right|
    &\le
    \frac{1}{K}
    \sum_{k=1}^{K}
    |r_i^k|
    \cdot
    \max_{1 \le k \le K}
    \left|
        \frac{1}{d_i^k}
        - \frac{1}{\bar{d}}
    \right|
    \nonumber \\
    &\le 
    \frac{1}{\sqrt{K}}
    \| \br_i \|_2
    \cdot
    \max_{1 \le k \le K}
    \left|
        \frac{1}{d_i^k}
        - \frac{1}{\bar{d}}
    \right|,
    \label{eq:loo-bound-2}
\end{align}
where we denote by $\br_i = (r_i^1, \dots, r_i^K)$ for $i \in [n]$.
In the second inequality, we used the fact that $\| \br_i \|_{1} \le \sqrt{K} \| \br_i \|_2$.
Combining \eqref{eq:loo-bound-1} with \eqref{eq:loo-bound-2} yields
\begin{align}
    |\hT^{\mathrm{loo}}_{\lambda} - \hT_{\lambda}|
    &\le 
    \frac{1}{n}
    \sum_{i=1}^{n}
    \bigg\{
    L (1 + \| \bu_i \|_2 + \| \bv_i \|_2)
    \Big(\tfrac{1}{\sqrt{K}} \| \br_i \|_2\Big)
    \bigg\}
    \bigg\{
    \max_{1 \le k \le K}
    \left|
        \frac{1}{d_i^k}
        - \frac{1}{\bar{d}}
    \right|
    \bigg\} \\
    &\le
    L
    \bigg\{
    \frac{1}{n}
    \sum_{i=1}^{n}
    (1 + \| \bu_i \|_2 + \| \bv_i \|_2) \Big(\tfrac{1}{\sqrt{K}} \| \br_i \|_2\Big)
    \bigg\}
    \bigg\{
    \max_{1 \le i \le n}
    \max_{1 \le k \le K}
    \left|
        \frac{1}{d_i^k}
        - \frac{1}{\bar{d}}
    \right|
    \bigg\}.
\end{align}
Further denote by:
\begin{itemize}
    \item
    $\bu = (\| \bu_1 \|_2, \dots, \| \bu_n \|_2)$;
    \item
    $\bv = (\| \bv_1 \|_2, \dots, \| \bv_n \|_2)$;
    \item
    $\br = \tfrac{1}{\sqrt{K}} (\| \br_1 \|_2, \dots, \| \br_n \|_2)$;
    \item
    $\delta_i = \max_{1 \le k \le K} |1/d_i^k - 1/\bar{d}|$ for $i \in [n]$,
    and $\bdelta = (\delta_1, \dots, \delta_n)$.
\end{itemize}
Continuing from above, using the Cauchy-Schwartz inequality on the first term and triangle inequality, we obtain
\begin{align}
    |\hT^{\mathrm{loo}}_{\lambda} - \hT_{\lambda}|
    \le
    L
    \cdot
    \left(1 + \frac{\| \bu \|_2}{\sqrt{n}} + \frac{\| \bv \|_2}{\sqrt{n}}\right)
    \cdot
    \frac{\| \br \|_2}{\sqrt{n}}
    \cdot
    \| \bdelta \|_{\infty}
    \le
    C
    \| \bdelta \|_{\infty}.
\end{align}

From a short calculations, 
it follows that 
$\tfrac{1}{\sqrt{n}}\| \bu \|_2$,
$\tfrac{1}{\sqrt{n}} \| \bv \|_2$,
$\tfrac{1}{\sqrt{n}} \| \br \|_2$
are all eventually almost surely bounded.
We will next show that $\| \bdelta \|_{\infty} \asto 0$.

\emph{Sup-norm concentration}:
We will show that
\begin{equation}
    \max_{1 \le i \le n}
    \left|
    \frac{1}{K}
    \sum_{k=1}^{K}
    \frac{1}{1 - \tr[\mL_{\lambda}^k]_{ii}}
    -
    \frac{1}{1 - \tfrac{1}{n} \tr[\mL^\ens_{\lambda}]}
    \right|
    \asto 0.
\end{equation}

From \Cref{lem:general-first-order},
observe that
\begin{align}
    \frac{1}{K}
    \sum_{k=1}^{K}
    \frac{1}{1 - [\mL_{\lambda}^k]_{ii}}
    &=
    \frac{1}{K}
    \sum_{k=1}^{K}
    \frac{1}{1 - [\tfrac{1}{n} \mX \mS_k (\tfrac{1}{n} \mS_k^\transp \mX^\transp \mX \mS_k + \lambda \bI_q)^{-1} \mS_k^\transp \mX^\transp]_{ii}} \\
    &\asympequi
    \frac{1}{K}
    \sum_{k=1}^{K}
    \frac{1}{1 -[\mL_{\mu}^{\ridge}]_{ii}}
    =
    \frac{1}{1 -[\mL_{\mu}^{\ridge}]_{ii}}.
\end{align}
Similarly, using \Cref{lem:general-first-order} again,
we also have
\begin{align}
    \frac{1}{1 - \frac{1}{n} \tr[\mL_{\lambda}^{\ens}]}
    = 
    \frac{1}{1 - \tfrac{1}{n} \tfrac{1}{K} \sum_{k=1}^K \tr[\mL_{\lambda}^k]}
    &=
    \frac{1}{1 - \tfrac{1}{n} \tfrac{1}{K} \sum_{k=1}^K \tr[\tfrac{1}{n} \mX \mS_k (\tfrac{1}{n} \mS_k^\transp \mX^\transp \mX \mS_k + \lambda \bI_q)^{-1} \mS_k^\transp \mX^\transp]} \\
    &\asympequi
    \frac{1}{1 - \tfrac{1}{n} \tfrac{1}{K} \sum_{k=1}^{K} \tr[\mL_{\mu}^{\ridge}]}
    =
    \frac{1}{1 - \tfrac{1}{n} \tr[\mL_{\mu}^{\ridge}]}.
\end{align}
Thus, we have the desired difference to be
\[
    \max_{1 \le i \le n}
    \left|
    \frac{1}{K}
    \sum_{k=1}^{K}
    \frac{1}{1 - \tr[\mL_{\lambda}^k]_{ii}}
    -
    \frac{1}{1 - \tfrac{1}{n} \tr[\mL^\ens_{\lambda}]}
    \right|
    \asympequi
    \max_{1 \le i \le n}
    \left|
        \frac{1}{1 - [\mL_{\mu}^{\ridge}]_{ii}}
        - \frac{1}{1 - \tfrac{1}{n} \tr[\mL_{\mu}^\ridge]}
    \right|.
\]
From the proof of Theorem 3 of \cite{patil2022estimating}, the right-hand side of the display above almost surely vanishes.
This completes the proof.

\subsection{Proof of \Cref{cor:distributional-consistency}}

This is a simple consequence of \Cref{thm:functional_consistency} using the definition of convergence in Wasserstein $W_2$ metric.
See, e.g., Chapter 6 of \cite{villani2009optimal}.

\section{Proofs in \Cref{sec:tuning}}
\label{app:proofs-sec:tuning}

\subsection*{Proof of \Cref{cor:ridge-equivalence}}

We begin by noting that when $\lambda = 0$, it suffices to prove the result for i.i.d.\ Gaussian sketches only. 
Consider first any sketch $\mS = \mU \mD \mV^\transp$ where $\mU$ is a uniformly distributed unitary matrix and $\mD$ is invertible. 
Then 
\[
\mS \biginv{\mS^\transp \mhSigma \mS} \mS^\transp = \mU \biginv{\mU^\transp \mhSigma \mU} \mU^\transp,
\]
and so the result does not depend on $\mD$ and $\mV$, and we can choose them to have the same distribution as in i.i.d.\ Gaussian sketching. 
For general free sketching, $\mS$ may not have $\mU$ uniformly distributed in finite dimensions, but the subordination relations in \Cref{thm:sketched-pseudoinverse} depend only on the spectrum of $\mS \mS^\transp$, so we can without loss of generality assume that $\mU$ are uniformly distributed and obtain the exact same equivalence relationship.

The subordination relation 
\[
\mu \asympequi \lambda \Sxf_{\mS \mS^\ctransp} \bigparen{-\tfrac{1}{p} \tr \bigbracket{\mhSigma \biginv{\mhSigma + \mu \mI_p}}}
\]
for $\mu > 0$ and $\lambda = 0$ requires that the the S-transform must go to $\infty$. 
From \Cref{tab:S-functions}, we know 
\[
\Sxf_{\mS \mS^\ctransp}(w) = \frac{\alpha}{\alpha + w}
\] for i.i.d.\ sketching, which means that we must send the denominator to 0. 
Thus we obtain the condition
\begin{align}
    \alpha = \tfrac{1}{p} \tr \bracket{\mhSigma \biginv{\mhSigma + \mu \mI_p}}.
\end{align}
By letting $K \to \infty$, the variance term vanishes, and only the bias term remains, proving the result.

\section{Proofs in \Cref{sec:discussion}}
\label{app:proofs-sec:discussion}

\subsection{Proof of \Cref{prop:gcv-primal-fails}}

We provide below the complete statement of \Cref{prop:gcv-primal-fails}, which includes expressions for $\nu'$ and $\nu''$.
These are excluded from the main paper. 
\begin{proposition}
    [Squared risk and GCV asymptotics and GCV inconsistency for observation sketch]
    \label{prop:gcv-primal-fails-appendix}
    Suppose \Cref{cond:sketch} holds for $\mT \mT^\top$, and that the operator norm of $\mSigma$ and second moment of $y_0$ are uniformly bounded in $p$.
    Then, for $\lambda > \widetilde{\lambda}_0$ and all $K$,
    \begin{gather}
        \label{eq:risk-gcv-decomp-primal-proof}
        R \bigparen{\widetilde{\vbeta}_{\lambda}^\ens}
        \asympequi
        R \bigparen{ \widetilde{\vbeta}_{\nu}^\ridge} + \frac{\nu' \widetilde{\Delta}}{K}
        \quad
        \text{and}
        \quad
        \widetilde{R} \bigparen{\widetilde{\vbeta}_{\lambda}^\ens}
        \asympequi
        \widetilde{R} \bigparen{ \widetilde{\vbeta}_{\nu}^\ridge} + \frac{\nu'' \widetilde{\Delta}}{K}, 
    \end{gather}
    where $\nu$ is an implicit regularization parameter that solves \eqref{eq:fp-primal-sketch},
    $\widetilde{\Delta} = \tfrac{1}{n} \vy^\transp (\tfrac{1}{n} \mX \mX^\transp + \nu \bI_n)^{-2} \vy$,
    and $\nu' \ge 0$ is an inflation factor in the risk decomposition given by:
    \begin{align}
    &\nu' = \nonumber \\
    & - \frac{\partial \mu}{\partial \lambda} 
    \lambda^2
    \Sxf_{\mT \mT^\transp}'\big(-\tfrac{1}{n}\tr\big[\tfrac{1}{n} \mX \mX^\transp \biginv{\tfrac{1}{n} \mX \mX^\transp + \nu \mI_n}\big]\big)
    \tfrac{1}{p} \tr\big[\tfrac{1}{n} \mX \mSigma \mX^\transp \biginv[2]{\tfrac{1}{n} \mX \mX^\transp + \nu \mI_n}\big],
    \label{eq:tmu'-proof}
    \end{align}
    while $\nu'' \geq 0$ is an inflation the GCV decomposition given by:
    \begin{align}
        &\nu''= \nonumber\\
        &
        \ddfrac{
        -\frac{\partial \mu}{\partial \lambda} 
        \lambda^2
        \Sxf_{\mT \mT^\transp}'\big(-\tfrac{1}{n}\tr\big[\tfrac{1}{n} \mX \mX^\transp \biginv{\tfrac{1}{n} \mX \mX^\transp + \nu \mI_n}\big]\big)
        \tfrac{1}{p} \tr\big[\tfrac{1}{n} \mX (\tfrac{1}{n} \mX^\top \mX) \mX^\transp \biginv[2]{\tfrac{1}{n} \mX \mX^\transp + \nu \mI_n}\big]}
        {\big(1 - \tfrac{1}{n}
        \mX \mX^\ctransp
        (\tfrac{1}{n} \mX \mX^\ctransp + \nu \mI_n)^{-1}\big)^2}.     \label{eq:tmu''-proof} 
    \end{align}
    Furthermore, under \Cref{asm:data}, in general we have
    $
        \nu' \not \asympequi \nu'',
    $ and therefore
    $
        \widetilde{R}(\widetilde{\vbeta}_{\lambda}^{\ens}) 
        \not \asympequi R(\widetilde{\vbeta}_{\lambda}^{\ens}).
    $
\end{proposition}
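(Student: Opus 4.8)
The plan is to mirror the proof of \Cref{thm:risk-gcv-asymptotics-appendix} essentially verbatim, substituting the observation-sketch resolvent equivalences for the feature-sketch ones, and then to isolate the two inflation factors $\nu'$ and $\nu''$ and exhibit a regime in which they disagree. For the decompositions \eqref{eq:risk-gcv-decomp-primal-proof}, first I would apply the Woodbury identity (as in the proof of \Cref{lem:general-first-order}) to write each component estimator as $\widetilde{\vbeta}_\lambda^k = \tfrac{1}{n}\mX^\top\mT_k\bigparen{\tfrac{1}{n}\mT_k^\top\mX\mX^\top\mT_k + \lambda\mI_m}^{-1}\mT_k^\top\vy$, so that a quadratic form $(\widetilde{\vbeta}_\lambda^k)^\top\mPsi\widetilde{\vbeta}_\lambda^\ell$ is governed, when $k=\ell$, by the second-order freely sketched resolvent \eqref{eq:general-second-order-primal}, and, when $k\neq\ell$, by a product of two first-order equivalences \eqref{eq:fp-primal-sketch} (the sketches being independent, the off-diagonal terms collapse onto the squared ridge term). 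This gives the dual analogue of \Cref{lem:quad-risk-ensemble},
\[
  (\widetilde{\vbeta}_\lambda^\ens - \vbeta_0)^\top\mPsi(\widetilde{\vbeta}_\lambda^\ens - \vbeta_0)
  \asympequi
  (\widetilde{\vbeta}_\nu^\ridge - \vbeta_0)^\top\mPsi(\widetilde{\vbeta}_\nu^\ridge - \vbeta_0) + \tfrac{\nu'_{\mPsi}\,\widetilde{\Delta}}{K},
\]
with $\nu'_{\mPsi}$ read off from \eqref{eq:tmu'_mPsi}.

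Taking $\mPsi = \mSigma$ and adding the irreducible variance $\sigma^2 = \EE[(y_0 - \bx_0^\top\vbeta_0)^2]$ yields the risk decomposition with $\nu' = \nu'_{\mSigma}$ as in \eqref{eq:tmu'-proof}. For the GCV estimator I would, exactly as in \Cref{thm:risk-gcv-asymptotics-appendix}, treat numerator and denominator separately: expanding $\tfrac{1}{n}\|\vy - \mX\widetilde{\vbeta}_\lambda^\ens\|_2^2$ about $\vbeta_0$ and applying the display above with $\mPsi = \tfrac{1}{n}\mX^\top\mX$ reduces the numerator to the unsketched-ridge numerator plus $\nu''_{\mathrm{num}}\widetilde{\Delta}/K$, while $\widetilde{\mL}_\lambda^\ens \asympequi \tfrac{1}{n}\mX\bigparen{\tfrac{1}{n}\mX^\top\mX + \nu\mI_p}^{-1}\mX^\top$ gives $\tr[\widetilde{\mL}_\lambda^\ens] \asympequi \tr[\widetilde{\mL}_\nu^\ridge]$; dividing produces the second half of \eqref{eq:risk-gcv-decomp-primal-proof} with $\nu''$ as in \eqref{eq:tmu''-proof}. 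Nonnegativity of $\nu'$ and $\nu''$ follows, as in \Cref{lem:general-second-order}, from the monotonicity of $\nu$ in $\lambda$ and of the S-transform.

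It then remains to show $\nu'\not\asympequi\nu''$ in general. Both factors carry the same prefactor in \eqref{eq:tmu'-proof}--\eqref{eq:tmu''-proof}, which up to sign equals $\lambda^2\,\Sxf_{\mT\mT^\transp}'(\cdot)\,\partial\nu/\partial\lambda$ and is strictly nonzero for, e.g., an i.i.d.\ Gaussian observation sketch with $\lambda>0$, where $\Sxf_{\mT\mT^\transp}(w) = \eta/(\eta+w)$ has $\Sxf_{\mT\mT^\transp}' < 0$. Cancelling it, $\nu'\asympequi\nu''$ would be precisely the ``dual'' of \Cref{lem:dual-mu'-mu''-matching}, namely
\[
  \tfrac{1}{p}\tr[\tfrac{1}{n}\mX\mSigma\mX^\top(\tfrac{1}{n}\mX\mX^\top + \nu\mI_n)^{-2}]
  \asympequi
  \frac{\tfrac{1}{p}\tr[\tfrac{1}{n}\mX(\tfrac{1}{n}\mX^\top\mX)\mX^\top(\tfrac{1}{n}\mX\mX^\top + \nu\mI_n)^{-2}]}{\bigparen{1 - \tfrac{1}{n}\tr[\tfrac{1}{n}\mX\mX^\top(\tfrac{1}{n}\mX\mX^\top + \nu\mI_n)^{-1}]}^2}.
\]
Unlike its feature-sketch counterpart, this identity fails: the left side carries the population covariance $\mSigma$, whereas the right side carries a ``squared'' sample Gram $\tfrac{1}{n}\mX^\top\mX$, and no ridge deterministic equivalent converts one into the other. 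To make this precise I would specialize to $\mSigma = \mI_p$ and isotropic $\vz$ in the proportional regime $p/n\to\gamma\in(0,\infty)$, so that the spectral distribution of $\tfrac{1}{n}\mX\mX^\top$ converges to a Marchenko--Pastur law $F_\gamma$; then the left side converges to $\gamma^{-1}\int\tfrac{s}{(s+\nu)^2}\,\mathrm{d}F_\gamma(s)$, the right-hand numerator to $\gamma^{-1}\int\tfrac{s^2}{(s+\nu)^2}\,\mathrm{d}F_\gamma(s)$, and the denominator to $\bigparen{1 - \int\tfrac{s}{s+\nu}\,\mathrm{d}F_\gamma(s)}^2$, and a direct evaluation (say at $\gamma=\nu=1$) shows the required equality is violated. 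Hence $\nu'\not\asympequi\nu''$; combining with consistency of GCV for unsketched ridge regression at level $\nu$ \citep{patil2021uniform} --- which gives $\widetilde{R}(\widetilde{\vbeta}_\nu^\ridge) \asympequi R(\widetilde{\vbeta}_\nu^\ridge)$ under \Cref{asm:data} --- and with $\widetilde{\Delta}>0$ whenever $\vy$ is nonzero (generic under the positive response variance), the decompositions \eqref{eq:risk-gcv-decomp-primal-proof} give $\widetilde{R}(\widetilde{\vbeta}_\lambda^\ens) - R(\widetilde{\vbeta}_\lambda^\ens) \asympequi (\nu'' - \nu')\,\widetilde{\Delta}/K \not\asympequi 0$.

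The routine part is the first half: it is the ``transpose'' of arguments already in the excerpt, obtained by reading \Cref{lem:general-first-order,lem:general-second-order} with $(\mT,\mX,m)$ playing the roles of $(\mS,\mX^\top,q)$. The main obstacle is the second half --- certifying that the would-be matching identity genuinely fails rather than holding by a hidden cancellation --- for which the explicit Marchenko--Pastur evaluation is the safe route; conceptually, observation sketching inflates the \emph{prediction} error through $\tfrac{1}{n}\mX\mSigma\mX^\top$, but the GCV correction, whose denominator is calibrated to the feature-side implicit ridge, only ``sees'' the \emph{training} Gram $\tfrac{1}{n}\mX^\top\mX$, and these two operators are not tied together by the ridge fixed point that makes \Cref{lem:dual-mu'-mu''-matching} hold.
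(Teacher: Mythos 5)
Your proposal is correct, and its first half—the risk and GCV decompositions \eqref{eq:risk-gcv-decomp-primal-proof}—follows the paper's route essentially verbatim: Woodbury to transpose the problem into the feature-sketch setting, the dual quadratic lemma with $\mPsi = \mSigma$ for risk and $\mPsi = \tfrac{1}{n}\mX^\top\mX$ for the GCV numerator, and the first-order trace equivalence for the denominator. Where you genuinely diverge is in certifying $\nu' \not\asympequi \nu''$. The paper (\Cref{lem:primal-mu'-mu''-mismatching}) rewrites both sides via Woodbury as $p \times p$ resolvent traces, reuses the feature-side matching identity of \Cref{lem:dual-mu'-mu''-matching} to eliminate the $\mSigma$-dependent second-order term, and obtains a closed-form expression for the discrepancy, namely $-\bigl(1 - \tfrac{a}{1-a}\bigr)^2$ with $a = \tfrac{1}{n}\tr[\bhSigma(\bhSigma + \nu \bI_p)^{-1}]$, valid for arbitrary $\mSigma$ under \Cref{asm:data}; this both proves the generic failure and reveals exactly when it could accidentally vanish ($a = 1/2$). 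You instead specialize to $\mSigma = \mI_p$ and evaluate the three Marchenko--Pastur integrals at a single point $(\gamma,\nu) = (1,1)$. That is a legitimate and more elementary way to establish an ``in general'' non-equivalence (and one can check it works: at $\gamma=\nu=1$ one has $a = 1 - m(-1) = (3-\sqrt{5})/2 \approx 0.382 \neq 1/2$, so the paper's formula confirms your counterexample), but it is strictly less informative, and you leave the decisive numerical evaluation unexecuted—``a direct evaluation shows the required equality is violated'' is the one step you should actually carry out, since the whole point of the lemma is to rule out a hidden cancellation. Your closing step, passing from $\nu'\not\asympequi\nu''$ to $\widetilde{R}(\widetilde{\vbeta}_\lambda^\ens)\not\asympequi R(\widetilde{\vbeta}_\lambda^\ens)$ via GCV consistency of unsketched ridge and $\widetilde{\Delta}$ bounded away from zero, is the same (implicit) argument as the paper's and is fine.
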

\begin{proof}
    The proof for the decomposition in \eqref{eq:risk-gcv-decomp-primal-proof} is similar to the proof of \Cref{thm:risk-gcv-asymptotics-appendix}.
    Our main workforce will be \Cref{lem:quad-risk-ensemble-primal}.

    \emph{Notation}:
    We will use the same strategy and notation as in the proof of \Cref{thm:risk-gcv-asymptotics-appendix}. 
    We will decompose the unknown response $y_0$ into the linear predictor corresponding to best linear projection parameter and the residual.
    Let $\vbeta_0$ denote the best projection parameter:
    $
        \bbeta_0
        = \bSigma^{-1} \EE[\bx_0 y_0].
    $
    We decompose the response into the best linear predictor $\bx^\top \bbeta_0$ and the residual error $y_0 - \bx_0^\top \bbeta_0$.
    We will denote by $\sigma^2 = \EE[(y_0 - \bx_0^\top \bbeta_0)^2]$. 

    \paragraph{Part 1: Risk asymptotics.\hspace{-0.5em}}
    
    As done in the proof of \Cref{thm:risk-gcv-asymptotics-appendix}, we have
    \begin{align}
        R(\widetilde{\vbeta}_\lambda^\ens)
        = 
        (\widetilde{\bbeta}_\lambda^\ens - \bbeta_0)^\top \mSigma (\widetilde{\bbeta}_\lambda^\ens - \bbeta_0) + \sigma^2
        \asympequi 
        (\widetilde{\bbeta}_\mu^\ridge - \bbeta_0)^\top \mSigma (\widetilde{\bbeta}_\mu^\ridge) + \sigma^2 + \frac{\nu' \widetilde{\Delta}}{K},
    \end{align}
    where $\nu'$ is as defined in \eqref{eq:tmu'-proof}.
    In the second step, we now instead used \Cref{lem:quad-risk-ensemble-primal} to obtain the desired equivalence.
    This completes the proof for the risk asymptotics decomposition.

    \paragraph{Part 2: GCV asymptotics.\hspace{-0.5em}}

    We will obtain asymptotic equivalents for the numerator and denominator of GCV in \eqref{eq:def-gcv-primal} separately below.

    \emph{Numerator}:
    Similar to the proof of \Cref{thm:risk-gcv-asymptotics-appendix}, we first decompose
    \begin{align}
        \tfrac{1}{n}
        \| \by - \bX \widetilde{\bbeta}_\lambda^\ens \|_2^2
        = 
        \tfrac{1}{n} \| \mX (\bbeta_0 - \widetilde{\bbeta}_{\lambda}^{\ens}) \|_2^2
        + \tfrac{1}{n} \| (\vy - \mX \bbeta_0) \|_2^2
        + \tfrac{2}{n}
        (\vy - \mX \vbeta_0)^\top
        \mX (\vbeta_0 - \widetilde{\bbeta}_{\lambda}^{\ens}).
    \end{align}
    An application of \Cref{lem:general-first-order} yields
    \[
        \tfrac{2}{n}
        (\vy - \mX \bbeta_0)^\top
        \mX (\bbeta_0 - \widetilde{\bbeta}_{\lambda}^{\ens})
        \asympequi
        \tfrac{2}{n}
        (\vy - \mX \bbeta_0)^\top
        \mX (\bbeta_0 - \widetilde{\bbeta}_{\mu}^{\ridge}).
    \]
    Notice that
    \begin{align}
        \tfrac{1}{n}
        \| \mX (\bbeta_0 - \widetilde{\bbeta}_{\lambda}^{\ens}) \|_2^2
        = 
        (\widetilde{\bbeta}_{\lambda}^{\ens} - \bbeta_0)^\top
        \tfrac{1}{n} \mX^\top \mX
        (\widetilde{\bbeta}_{\lambda}^{\ens} - \bbeta_0)
        \asympequi
        (\widetilde{\bbeta}_{\mu}^{\ridge} - \bbeta_0)^\top
        \tfrac{1}{n} \mX^\top \mX
        (\widetilde{\bbeta}_{\mu}^{\ridge} - \bbeta_0)
        + \frac{\nu''_{\mathrm{num}} \widetilde{\Delta}}{K},
    \end{align}
    where $\nu''_{\mathrm{num}}$ is expressed as:
    \[
        \nu''_{\mathrm{num}}
        = 
       - \frac{\partial \mu}{\partial \lambda} 
        \lambda^2
        \Sxf_{\mT \mT^\transp}'\big(- \tfrac{1}{n}\tr\big[\tfrac{1}{n} \mX \mX^\transp \biginv{\tfrac{1}{n} \mX \mX^\transp + \nu \mI_n}\big]\big).
    \]
    Using \Cref{lem:quad-risk-ensemble}, we get
    \begin{align}
        \tfrac{1}{n}
        \| \vy - \mX \widetilde{\bbeta}_{\lambda}^{\ens} \|_2^2
        &\asympequi
        (\widetilde{\bbeta}_{\mu}^\ridge - \bbeta_0)^\top
        \tfrac{1}{n} \mX^\top \mX
        (\widetilde{\bbeta}_{\mu}^\ridge - \bbeta_0)
        + \tfrac{2}{n} (\vy - \mX \bbeta_0)^\top
        \mX (\bbeta_0 - \widetilde{\bbeta}_{\mu}^{\ridge}) \nonumber \\
        &\quad +
        \tfrac{1}{n} \| (\vy - \mX \bbeta_0) \|_2^2
        + \frac{\nu''_{\mathrm{num}} \widetilde{\Delta}}{K}. \label{eq:ensemble-squared-decomp-primal-1}
    \end{align}
    On the other hand, we also have
    \begin{align}
        \tfrac{1}{n}
        \| \vy - \mX \widetilde{\bbeta}_\mu^\ridge  \|_2^2
        &=  \tfrac{1}{n}
        \| (\vy - \mX \bbeta_0) 
        + \mX (\bbeta_0 - \bhbeta_{\mu}^{\ridge}) \|_2^2 \nonumber \\
        &=
        \tfrac{1}{n}
        \| \vy - \mX \bbeta_0 \|_2^2
        + \tfrac{2}{n}
        (\vy - \mX \bbeta_0)^\top
        \mX (\bbeta_0 - \widetilde{\bbeta}_{\mu}^{\ridge})
        + \tfrac{1}{n} \| \mX (\bbeta_0 - \widetilde{\bbeta}_{\mu}^{\ridge}) \|_2^2. \label{eq:ensemble-squared-decomp-primal-2}
    \end{align}
    Combining \eqref{eq:ensemble-squared-decomp-primal-1} and \eqref{eq:ensemble-squared-decomp-primal-2}, we deduce that
    \begin{equation}
        \label{eq:gcv-numerator-asympequi-primal}
        \tfrac{1}{n}
        \| \by - \bX \widetilde{\bbeta}_\lambda^\ens \|_2^2
        \asympequi \tfrac{1}{n} \| \by - \bX \widetilde{\bbeta}_\mu^\ridge \|_2^2 + \frac{\nu''_{\mathrm{num}} \widetilde{\Delta}}{K}.
    \end{equation}

    \emph{Denominator}:
    We will now derive an asymptotic equivalent for the GCV denominator.
    By repeated applications of the Woodbury matrix identity along with the first-order equivalence for observation sketch from \Cref{lem:general-first-order}, we get
    \begin{align}
        \widetilde{\mL}_\lambda^\ens
        &=
        \frac{1}{K}
        \sum_{k=1}^{K}
        \tfrac{1}{n}
        \mX 
        (\tfrac{1}{n} \mX^\ctransp \mT_k \mT_k^\ctransp \mX + \lambda \mI_p)^{-1}
        \mX^\ctransp \mT_k \mT_k^\ctransp \\
        &=
        \frac{1}{K}
        \sum_{k=1}^{K}
        \tfrac{1}{n}
        \mX \mX^\ctransp
        \mT_k
        (\tfrac{1}{n} \mT_k^\ctransp \mX \mX^\ctransp \mT_k + \lambda \mI_m)^{-1}
        \mT_k^\ctransp \\
        &\asympequi
        \tfrac{1}{n}
        \mX \mX^\ctransp
        (\tfrac{1}{n} \mX \mX^\ctransp + \nu \mI_n)^{-1} \\
        &=
        \tfrac{1}{n}
        \mX
        (\tfrac{1}{n} \mX^\top \mX + \nu \mI_p)^{-1} \bX^\top.
    \end{align}
    In the chain above, we used the Woodbury matrix identity for equality in the second and forth lines, and \Cref{lem:general-first-order} for the equivalence in the third line.
    Hence, we get
    \begin{equation}
        \label{eq:gcv-denominator-asympequi-primal}
        \tr[\widetilde{\mL}^\ens_{\lambda}]
        \asympequi 
        \tr[\tfrac{1}{n} \mX (\tfrac{1}{n} \mX^\top \mX + \nu \mI_p)^{-1} \mX^\top]
        = \tr[(\tfrac{1}{n} \mX^\top \mX + \nu \mI_p)^{-1} \tfrac{1}{n} \mX^\top \mX]
        = \tr[\widetilde{\mL}^\ridge_{\nu}].
    \end{equation}
    Therefore, combining \eqref{eq:gcv-numerator-asympequi-primal} and \eqref{eq:gcv-denominator-asympequi-primal}, for the GCV estimator, we obtain
    \begin{align}
        \widetilde{R}(\widetilde{\bbeta}_{\lambda}^{\ens})
        = 
        \ddfrac{\tfrac{1}{n} \| \vy - \mX \widetilde{\bbeta}_{\lambda}^{\ens} \|_2^2}{(1 - \tfrac{1}{n} \tr[\widetilde{\mL}_{\lambda}^{\ens}])^2}
        &\asympequi
        \ddfrac{\tfrac{1}{n} \| \vy - \mX \widetilde{\bbeta}_{\mu}^{\ridge} \|_2^2 + \frac{\nu''_{\mathrm{num}} \widetilde{\Delta}}{K} }{(1 - \tfrac{1}{n} \tr[\widetilde{\mL}_{\lambda}^{\ens}])^2} \\
        &\asympequi
        \ddfrac{\tfrac{1}{n} \| \vy - \mX \widetilde{\bbeta}_{\mu}^{\ridge} \|_2^2 + \frac{\nu''_{\mathrm{num}} \widetilde{\Delta}}{K} }{(1 - \tfrac{1}{n} \tr[\widetilde{\mL}_{\nu}^{\ridge}])^2} \\
        &=
        \widetilde{R}(\widetilde{\bbeta}_{\mu}^{\ridge})
        + \frac{\nu'' \widetilde{\Delta}}{K},
    \end{align}
    where $\nu''$ is as defined in \eqref{eq:tmu''-proof}.
    This finishes the proof of the GCV asymptotics decomposition.
    
    \paragraph{Part 3: GCV inconsistency.\hspace{-0.5em}}
    The inconsistency follows from the asymptotic mismatch of $\nu'$ and $\nu''$.
    To show the mismatch, it suffices to show that
    \[
        \tfrac{1}{p} \tr\big[\tfrac{1}{n} \mX \mSigma \mX^\transp \biginv[2]{\tfrac{1}{n} \mX \mX^\transp + \nu \mI_n}\big]
        \not \asympequi
        \ddfrac
        {\tfrac{1}{p} \tr\big[\tfrac{1}{n} \mX (\tfrac{1}{n} \mX^\top \mX) \mX^\transp \biginv[2]{\tfrac{1}{n} \mX \mX^\transp + \nu \mI_n}\big]}
        {\big(1 - \tfrac{1}{n}
        \mX \mX^\ctransp
        (\tfrac{1}{n} \mX \mX^\ctransp + \nu \mI_n)^{-1}\big)^2}.
    \]
    This is shown in \Cref{lem:primal-mu'-mu''-mismatching}.

    This concludes all the three parts and finishes the proof.
\end{proof}

\subsubsection*{Helper lemmas for the proof of \Cref{prop:gcv-primal-fails}}

\bigskip

\begin{lemma}
    [Quadratic risk decomposition for the ensemble estimator for observation sketch]
    \label{lem:quad-risk-ensemble-primal}
    Assume the conditions of \Cref{lem:general-second-order}.
    Let $\mPsi$ be any positive semidefinite matrix
    with uniformly bounded operator norm,
    that is independent of $( \mT_k )_{k=1}^{K}$.
    Let $\bbeta_0 \in \RR^{p}$ be any vector with 
    uniformly bounded Euclidean norm,
    that is independent of $( \mT_k )_{k=1}^{K}$.
    Consider the ensemble estimator obtained with observation sketch as defined in \eqref{eq:primal-sketch-estimator}.
    Then, 
    under \Cref{asm:data,cond:sketch},
    for $\lambda > \widetilde{\lambda}_0$
    and all $K$,
    \[
        (\vtbeta_{\lambda}^{\ens} - \bbeta_0)^\top
        \mPsi
        (\vtbeta_{\lambda}^{\ens} - \bbeta_0)
        \asympequi
        (\widetilde{\bbeta}_{\nu}^{\ridge} - \bbeta_0)^\top
        \mPsi
        (\widetilde{\bbeta}_{\nu}^{\ridge} - \bbeta_0)
        + \frac{\nu_{\mPsi}' \widetilde{\Delta}}{K},
    \]
    where 
    $\nu$ is as defined in \eqref{eq:fp-primal-sketch},
    $\nu_{\mPsi}'$ is as defined in \eqref{eq:tmu'_mPsi},
    $\widetilde{\Delta}$ is as defined in \Cref{prop:gcv-primal-fails-appendix}.
\end{lemma}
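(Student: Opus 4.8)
The plan is to mirror the proof of \Cref{lem:quad-risk-ensemble}, substituting the observation-sketch resolvent equivalences of \Cref{lem:general-first-order,lem:general-second-order} for their feature-sketch counterparts. First I would expand the ensemble quadratic form over the $K$ members,
\begin{align*}
    (\vtbeta_\lambda^\ens - \bbeta_0)^\top \mPsi (\vtbeta_\lambda^\ens - \bbeta_0)
    = \frac{1}{K^2}\sum_{k,\ell=1}^{K} (\vtbeta_\lambda^k)^\top \mPsi \vtbeta_\lambda^\ell
    - \frac{2}{K}\sum_{k=1}^{K} \bbeta_0^\top \mPsi \vtbeta_\lambda^k
    + \bbeta_0^\top \mPsi \bbeta_0,
\end{align*}
and split the double sum into the $K(K-1)$ off-diagonal terms and the $K$ diagonal terms. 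For $k \ne \ell$, the second part of \Cref{lem:general-first-order} gives $\vtbeta_\lambda^k \asympequi \widetilde{\bbeta}_\nu^\ridge = \tfrac{1}{n}\mX^\top\biginv{\tfrac{1}{n}\mX\mX^\top + \nu\mI_n}\vy$ (using the push-through identity to rewrite the ridge estimator in resolvent form), and since $\mT_k,\mT_\ell$ are independent of each other and of $(\mX,\vy,\bbeta_0,\mPsi)$, a conditioning argument yields $(\vtbeta_\lambda^k)^\top \mPsi \vtbeta_\lambda^\ell - (\widetilde{\bbeta}_\nu^\ridge)^\top \mPsi \widetilde{\bbeta}_\nu^\ridge \asto 0$; the same equivalence disposes of the linear terms. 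Collecting, the lemma reduces to the single-member statement $(\vtbeta_\lambda^k)^\top \mPsi \vtbeta_\lambda^k - (\widetilde{\bbeta}_\nu^\ridge)^\top \mPsi \widetilde{\bbeta}_\nu^\ridge \asympequi \nu'_{\mPsi}\,\widetilde{\Delta}$.

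For this last step I would observe that $(\vtbeta_\lambda^k)^\top \mPsi \vtbeta_\lambda^k = \tfrac{1}{n}\vy^\top A_k\vy$, where $A_k$ is exactly the left-hand side of \eqref{eq:general-second-order-primal} (at the original regularization $\lambda$). Since $\tfrac{1}{n}\vy\vy^\top$ has a.s.\ bounded trace norm under \Cref{asm:data}, the second-order observation-sketch equivalence applies and gives
\begin{align*}
    (\vtbeta_\lambda^k)^\top \mPsi \vtbeta_\lambda^k
    \asympequi
    \tfrac{1}{n}\vy^\top \biginv{\tfrac{1}{n}\mX\mX^\top + \nu\mI_n}\big(\tfrac{1}{n}\mX\mPsi\mX^\top + \nu'_{\mPsi}\mI_n\big)\biginv{\tfrac{1}{n}\mX\mX^\top + \nu\mI_n}\vy .
\end{align*}
Writing $\widetilde{\bbeta}_\nu^\ridge$ again via push-through shows the $\tfrac{1}{n}\mX\mPsi\mX^\top$ piece of the right-hand side is precisely $(\widetilde{\bbeta}_\nu^\ridge)^\top\mPsi\widetilde{\bbeta}_\nu^\ridge$, so the difference collapses to $\nu'_{\mPsi}\,\tfrac{1}{n}\vy^\top\biginv[2]{\tfrac{1}{n}\mX\mX^\top + \nu\mI_n}\vy = \nu'_{\mPsi}\,\widetilde{\Delta}$, with $\nu'_{\mPsi}$ the quantity defined in \eqref{eq:tmu'_mPsi}. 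Reassembling the $K$ identical diagonal terms, the off-diagonal terms, and the linear terms then produces the stated decomposition with the $1/K$ rate.

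I expect the only real work to be bookkeeping: keeping the substitution $\mT \leftrightarrow \mS$, $\mX \leftrightarrow \mX^\top$ consistent when invoking \Cref{lem:general-first-order,lem:general-second-order}, checking that $\tfrac{1}{n}\mX\mPsi\mX^\top$ has uniformly bounded operator norm so that \Cref{lem:general-second-order} is applicable (which follows since $\|\mX\|_{\mathrm{op}}$ is a.s.\ bounded under \Cref{asm:data} and $\|\mPsi\|_{\mathrm{op}}$ is bounded by hypothesis), and repeatedly using the push-through identity $\mX(\tfrac{1}{n}\mX^\top\mX + \nu\mI_p)^{-1} = (\tfrac{1}{n}\mX\mX^\top + \nu\mI_n)^{-1}\mX$ to align the ridge quadratic form with the resolvents appearing in the observation-sketch equivalences. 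No probabilistic input beyond that already used for \Cref{lem:quad-risk-ensemble} is needed; in particular the off-diagonal vanishing relies only on the mutual independence of the $\mT_k$ and their independence from the data.
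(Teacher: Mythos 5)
Your proposal is correct and is essentially the paper's own argument: the paper proves this lemma by stating that it follows analogously to \Cref{lem:quad-risk-ensemble} with the observation-sketch versions of \Cref{lem:general-first-order,lem:general-second-order} substituted in, which is exactly the decomposition into off-diagonal, diagonal, and linear terms that you carry out. Your write-up in fact supplies the bookkeeping (push-through identities, the identification of the diagonal term with the left-hand side of \eqref{eq:general-second-order-primal}, and the boundedness checks) that the paper omits.
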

\begin{proof}
    The proof follows analogously to the proof of \Cref{lem:quad-risk-ensemble}, except now we use the first- and second-order equivalences from \Cref{lem:general-first-order,lem:general-second-order} for observation sketch (instead of feature sketch).
    We omit the details.
\end{proof}

\begin{lemma}
    [Asymptotic non-equivalence of risk and GCV inflation factors for observation sketch]
    \label{lem:primal-mu'-mu''-mismatching}
    Under \Cref{asm:data}, 
    \begin{align}
        &
        \tfrac{1}{n}
        \tr\big[(\tfrac{1}{n} \bX \bX^\top + \nu \bI_n)^{-1} 
        (\tfrac{1}{n} \bX \bSigma \bX^\top)
        (\tfrac{1}{n} \bX \bX^\top + \nu \bI_n)^{-1}\big] 
        \nonumber \\
        &\not \asympequi
        \frac{
            \tfrac{1}{n}
            \tr\big[(\tfrac{1}{n}\bX \bX^\top + \nu \bI_n)^{-1}
            (\tfrac{1}{n} \bX \bhSigma \bX^\top)
            (\tfrac{1}{n} \bX \bX^\top + \nu \bI_n)^{-1}\big]
        }{
            \big(1 - \tfrac{1}{n} \tr[\bhSigma (\bhSigma + \nu \bI_p)^{-1}]\big)^2
        }.
        \label{eq:primal-matching-target-proof}
    \end{align}
\end{lemma}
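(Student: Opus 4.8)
The plan is to reduce the claimed non-equivalence, after an $n$-to-$p$ resolvent reduction, to a single scalar identity and check that it fails for a concrete data distribution. Since it suffices to exhibit one distribution satisfying \Cref{asm:data} for which the equivalence fails, I would take the isotropic case $\bSigma = \bI_p$ (and write $\gamma = p/n$). First, using the push-through identity $(\tfrac{1}{n}\bX\bX^\top + \nu\bI_n)^{-1}\bX = \bX(\bhSigma + \nu\bI_p)^{-1}$ together with $\bX^\top\bX = n\bhSigma$, the left-hand side of the claim becomes $\tfrac{1}{n}\tr[\bhSigma(\bhSigma + \nu\bI_p)^{-2}]$ (using that $\bhSigma$ commutes with its resolvent), the numerator on the right becomes $\tfrac{1}{n}\tr[\bhSigma^2(\bhSigma + \nu\bI_p)^{-2}]$, and the denominator $(1 - \tfrac{1}{n}\tr[\bhSigma(\bhSigma+\nu\bI_p)^{-1}])^2$ is already in this form.

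Next I would compute deterministic equivalents of these three scalars. Let $v$ solve the $\bSigma = \bI_p$ fixed point $\nu = v^{-1} - \gamma(v+1)^{-1}$ from \eqref{eq:v-fixed-point}, and set $\tv_v = (v^{-2} - \gamma(v+1)^{-2})^{-1}$ as in \eqref{eq:def-tvv-ridge}. Then \Cref{lem:standard-ridge-detequi} gives $\tfrac{1}{n}\tr[\bhSigma(\bhSigma+\nu\bI_p)^{-1}] \asympequi \gamma v/(v+1)$, and from its third part $\tfrac{1}{n}\tr[\bhSigma(\bhSigma+\nu\bI_p)^{-2}] \asympequi \gamma\tv_v/(v+1)^2$. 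The delicate term is $\tfrac{1}{n}\tr[\bhSigma^2(\bhSigma+\nu\bI_p)^{-2}]$: the subtlety is that one must not write it as $\tfrac{1}{n}\tr[\bhSigma\cdot(\bhSigma+\nu\bI_p)^{-1}\bhSigma(\bhSigma+\nu\bI_p)^{-1}]$ and substitute the deterministic equivalent of the inner product, since the outer $\bhSigma$ is random and correlated with that factor — and this is exactly the manipulation that is legitimate for the $p$-dimensional sandwich arising in feature sketching (cf.\ \Cref{lem:dual-mu'-mu''-matching}), which is what makes the observation case behave differently. Instead I would expand $\bhSigma^2(\bhSigma+\nu\bI_p)^{-2} = \bI_p - 2\nu(\bhSigma+\nu\bI_p)^{-1} + \nu^2(\bhSigma+\nu\bI_p)^{-2}$ and evaluate each term against the deterministic test matrix $\tfrac{1}{n}\bI_p$, using the first-order resolvent equivalent and its $\nu$-derivative for $\tfrac{1}{n}\tr[(\bhSigma+\nu\bI_p)^{-2}]$, with $\partial v/\partial\nu = -\tv_v$ obtained by differentiating the fixed point. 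This yields $\tfrac{1}{n}\tr[\bhSigma^2(\bhSigma+\nu\bI_p)^{-2}] \asympequi \gamma\bigl(\tfrac{v}{v+1} - \tfrac{\nu\tv_v}{(v+1)^2}\bigr)$.

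Substituting, the claimed equivalence reduces to $\tv_v/(v+1)^2 = \bigl(v(v+1) - \nu\tv_v\bigr)/(v+1-\gamma v)^2$, i.e.\ $\tv_v\bigl[(v+1-\gamma v)^2 + \nu(v+1)^2\bigr] = v(v+1)^3$. Using the fixed-point relations $\nu v(v+1) = (v+1) - \gamma v$ and $v^2(v+1)^2\tv_v^{-1} = (v+1)^2 - \gamma v^2$, the bracket equals $\bigl((v+1)-\gamma v\bigr)\bigl((v+1)^2 - \gamma v^2\bigr)/v$, and the whole left-hand side collapses to $v(v+1)^2\bigl((v+1)-\gamma v\bigr)$; the identity thus becomes $v(v+1)^2\bigl((v+1)-\gamma v\bigr) = v(v+1)^3$, equivalently $\gamma v = 0$. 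Since $\gamma > 0$ and $v > 0$ for every finite $\nu > 0$, this fails, so the equivalence does not hold and hence $\nu' \not\asympequi \nu''$.

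The main obstacle will be the second step — computing the deterministic equivalent of the mixed trace $\tfrac{1}{n}\tr[\bhSigma^2(\bhSigma+\nu\bI_p)^{-2}]$ correctly, rather than illegitimately pulling a random, correlated $\bhSigma$ out of an asymptotic equivalence, which would spuriously make the left-hand side equal the right-hand numerator and mimic the feature-sketch matching of \Cref{lem:dual-mu'-mu''-matching}. The remedy is to reduce everything to traces of pure resolvent powers tested against deterministic matrices and to carry the $\partial v/\partial\nu = -\tv_v$ identity through the second-order term; the remaining algebra is a routine verification. (The same reduction, together with the general-$\bSigma$ resolvent equivalents of \Cref{lem:standard-ridge-detequi}, would handle arbitrary $\bSigma$, but the isotropic case already suffices for the ``in general'' claim.)
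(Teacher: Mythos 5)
Your proposal is correct, but it takes a genuinely different route from the paper. Both arguments start the same way, using the push-through/Woodbury identity to convert the $n\times n$ traces into $p\times p$ traces of $\bhSigma$-resolvents. From there the paper keeps $\bSigma$ general: it splits each side as $\tr[(\bhSigma+\nu\bI_p)^{-1}\mathbf{M}] - \nu\tr[(\bhSigma+\nu\bI_p)^{-1}\mathbf{M}(\bhSigma+\nu\bI_p)^{-1}]$ with $\mathbf{M}\in\{\bSigma,\bhSigma\}$, invokes \Cref{lem:dual-mu'-mu''-matching} to cancel the two second-order pieces against each other, and is left with a first-order residual that it evaluates in closed form as a (generically nonzero) negative square of $d/(1-d)$ with $d = \tfrac{1}{n}\tr[\bhSigma(\bhSigma+\nu\bI_p)^{-1}]$. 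You instead specialize to $\bSigma=\bI_p$, compute all three scalars from scratch via the deterministic equivalents of \Cref{lem:standard-ridge-detequi} (handling the mixed trace $\tfrac{1}{n}\tr[\bhSigma^2(\bhSigma+\nu\bI_p)^{-2}]$ by the partial-fraction expansion plus the $\partial v/\partial\nu = -\tv_v$ derivative identity, correctly refusing to pull the correlated random $\bhSigma$ through an equivalence), and reduce the putative matching to the scalar identity $\gamma v = 0$, which fails. I checked your limits numerically (e.g.\ $\gamma=1$, $v=2$, $\nu=1/6$ gives LHS $=4/5$, RHS $=24/5$) and they are consistent with the gap $-\bigl(d/(1-d)\bigr)^2$ that the paper's route produces. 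The trade-off: the paper's argument is shorter because it reuses the feature-sketch matching lemma and yields the explicit gap for arbitrary $\bSigma$; yours is more self-contained and elementary but establishes only the isotropic counterexample — which is all the ``in general'' claim of \Cref{prop:gcv-primal-fails} requires, and your closing remark correctly notes how the same reduction would extend to general $\bSigma$. The only step you should make explicit in a final write-up is the justification for differentiating the first-order equivalence in $\nu$ (the differentiation rule for deterministic equivalents cited by the paper covers this).
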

\begin{proof}
We will first derive asymptotic equivalents for both the left- and right-hand sides of \eqref{eq:primal-matching-target-proof}.
Then we will show that the difference in their asymptotic equivalents is non-zero.

\paragraph{Asymptotic equivalent for left-hand side.\hspace{-0.5em}}
Using Woodbury matrix identity,
we can write
\begin{align}
    \tr\big[(\tfrac{1}{n} \bX \bX^\top + \nu \bI_n)^{-1} 
    (\tfrac{1}{n} \bX \bSigma \bX^\top)
    (\tfrac{1}{n} \bX \bX^\top + \nu \bI_n)^{-1}\big]
    &=
    \tr[(\bhSigma + \nu \bI_p)^{-1}
    \bhSigma (\bhSigma + \nu \bI_p)^{-1} \bSigma] \\
    &=
    \tr[(\bhSigma + \nu \bI_p)^{-1} \bSigma
    (\bI_p - \nu (\bhSigma + \nu \bI_p)^{-1})] \\
    &=
    \tr[(\bhSigma + \nu \bI_p)^{-1} \bSigma]
    - \nu \tr[(\bhSigma + \nu \bI_p)^{-1} \bSigma (\bhSigma + \nu \bI_p)^{-1}].
\end{align}

\paragraph{Asymptotic equivalent for right-hand side.\hspace{-0.5em}}
Similarly, the numerator of GCV can be expressed as
\begin{align}
    \tr\big[(\tfrac{1}{n} \bX \bX^\top + \nu \bI_n)^{-1}
    (\tfrac{1}{n} \bX \bhSigma \bX^\top)
    (\tfrac{1}{n} \bX \bX^\top + \nu \bI_n)^{-1}\big]
    &=
    \tr[(\bhSigma + \nu \bI_p)^{-1}
    \bhSigma (\bhSigma + \nu \bI_p)^{-1}
    \bhSigma] \\
    &=
    \tr[(\bhSigma + \nu \bI_p)^{-1} \bhSigma] 
    - \nu \tr[(\bhSigma + \nu \bI_p)^{-1} \bhSigma (\bhSigma + \nu \bI_p)^{-1}].
\end{align}

From \Cref{lem:dual-mu'-mu''-matching},
we have that
\[
    \nu \tr[(\bhSigma + \nu \bI_p)^{-1} \bSigma (\bhSigma + \nu \bI_p)^{-1}]
    \asympequi
    \ddfrac{\nu \tr[(\bhSigma + \nu \bI_p)^{-1} \bhSigma (\bhSigma + \nu \bI_p)^{-1}]}
    {\big(1 - \tfrac{1}{n} \tr[\bhSigma (\bhSigma + \nu \bI_p)^{-1}]\big)^2}.
\]

\paragraph{Mismatching of asymptotic equivalents.\hspace{-0.5em}}
Observe that
\begin{align}
    &
    \tfrac{1}{n}
    \tr\big[(\tfrac{1}{n} \bX \bX^\top + \nu \bI_n)^{-1} 
    (\tfrac{1}{n} \bX \bSigma \bX^\top)
    (\tfrac{1}{n} \bX \bX^\top + \nu \bI_n)^{-1}\big]  \\
    &-
    \frac{\tfrac{1}{n}
        \tr\big[(\tfrac{1}{n} \bX \bX^\top + \nu \bI_n)^{-1}
        (\tfrac{1}{n} \bX \bhSigma \bX^\top)
        (\tfrac{1}{n} \bX \bX^\top + \nu \bI_n)^{-1}\big]
    }{
        \big(1 - \tfrac{1}{n} \tr[\bhSigma (\bhSigma + \nu \bI_p)^{-1}]\big)^2
    } \\
    &\asympequi
    \tfrac{1}{n}
    \tr[(\bhSigma + \nu \bI_p)^{-1} \bSigma]
     - 
     \frac{\tfrac{1}{n} \tr[(\bhSigma + \nu \bI_p)^{-1} \bhSigma]}
     {
        \big(1 - \tfrac{1}{n} \tr[\bhSigma (\bhSigma + \nu \bI_p)^{-1}]\big)^2
     } \\
     &\asympequi
     \frac{1}{
     1 - \tfrac{1}{n} \tr[\bhSigma (\bhSigma + \nu \bI_p)^{-1}]}
     -  1
     -
     \frac{\tfrac{1}{n} \tr[(\bhSigma + \nu \bI_p)^{-1} \bhSigma]}
     {
        \big(1 - \tfrac{1}{n} \tr[\bhSigma (\bhSigma + \nu \bI_p)^{-1}]\big)^2
     } \\
     &=
     -
     \left(
     1 - 
     \frac
     {\tfrac{1}{n} \tr[(\bhSigma + \nu \bI_p)^{-1} \bhSigma]}
     {1 - \tfrac{1}{n} \tr[(\bhSigma + \nu \bI_p)^{-1} \bhSigma]}
     \right)^2.
\end{align}
The last line is in general not equal to $0$, proving the desired asymptotic mismatch.
This finishes the proof.
\end{proof}

\subsection{Correction using ensemble trick for GCV with observation sketch}
\label{sec:primal-correction-ensemble-trick}

Below we outline a method that corrects GCV for the sketched ensemble estimator with observation sketch.
The idea of the method is to estimate the error term in the mismatch in \Cref{lem:primal-mu'-mu''-mismatching} using a combination of the ensemble trick and our second-order sketched equivalences.
The correction takes a complicated form involving both the unsketched and sketched data.
We are not aware of any method that uses only sketched data.

\begin{enumerate}[leftmargin=7mm ]
    \item Estimate $\nu$ from the data and sketch using the subordination relation \eqref{eq:fp-primal-sketch}.
    \item Estimate the following two quantities that appear in the inflation of the GCV decomposition: 
    \[
        \widetilde{\Delta} = \tfrac{1}{n} \by^\top (\tfrac{1}{n} \bX \bX^\top + \nu \bI_n)^{-2} \by
    \text{ and }
    C_1
    =
        \frac{\tfrac{1}{n}
            \tr\big[(\tfrac{1}{n} \bX \bX^\top + \nu \bI_n)^{-1}
            (\tfrac{1}{n} \bX \bhSigma \bX^\top)
            (\tfrac{1}{n} \bX \bX^\top + \nu \bI_n)^{-1}\big]
        }{
            \big(1 - \tfrac{1}{n} \tr[\bhSigma (\bhSigma + \nu \bI_p)^{-1}]\big)^2
        }.
    \]
    \item Use ensemble trick as explained in \Cref{sec:tuning} on $\widetilde{R}(\widetilde{\bbeta}_\lambda^\ens) = \widetilde{R}(\widetilde{\bbeta}_\nu^\ridge) + \tfrac{\nu'' \widetilde{\Delta}}{K}$ with $K = 1$ and $K = 2$ to estimate $\widetilde{R}(\widetilde{\bbeta}_\nu^\ridge)$ first and then estimate the following component:
    \[
        C 
        = \nu'' \widetilde{\Delta}
        = -\frac{\partial \nu}{\partial \lambda} 
        \lambda^2
        \Sxf_{\mT \mT^\transp}'\big(- \tfrac{1}{n}\tr\big[\tfrac{1}{n} \mX \mX^\transp \biginv{\tfrac{1}{n} \mX \mX^\transp + \nu \mI_n}\big]\big)
        C_1 \widetilde{\Delta}.
    \]
    \item Eliminate $\widetilde{\Delta}$ from $C$ to get an estimate for the following component:
    \[
        C_2 = -\frac{\partial \nu}{\partial \lambda} 
        \lambda^2
        \Sxf_{\mT \mT^\transp}'\big(- \tfrac{1}{n}\tr\big[\tfrac{1}{n} \mX \mX^\transp \biginv{\tfrac{1}{n} \mX \mX^\transp + \nu \mI_n}\big]\big).
    \]
    \item Then use the following equivalence to estimate:
    \begin{align}
        C_1'
        &=
        \tfrac{1}{n}
        \tr\big[(\tfrac{1}{n} \bX \bX^\top + \nu \bI_n)^{-1} 
        (\tfrac{1}{n} \bX \bSigma \bX^\top)
        (\tfrac{1}{n} \bX \bX^\top + \nu \bI_n)^{-1}\big] \\
        &\asympequi
        \frac{
        \tfrac{1}{n}
            \tr\big[(\tfrac{1}{n} \bX \bX^\top + \nu \bI_n)^{-1}
            (\tfrac{1}{n} \bX \bhSigma \bX^\top)
            (\tfrac{1}{n} \bX \bX^\top + \nu \bI_n)^{-1}\big]
        }{
            \big(1 - \tfrac{1}{n} \tr[\bhSigma (\bhSigma + \nu \bI_p)^{-1}]\big)^2
        } \\
        &\qquad
             -
     \left(
         1 - 
         \frac
         {\tfrac{1}{n} \tr[(\bhSigma + \nu \bI_p)^{-1} \bhSigma]}
         {1 - \tfrac{1}{n} \tr[(\bhSigma + \nu \bI_p)^{-1} \bhSigma]}
         \right)^2.
    \end{align}
    \item Finally, obtain the corrected estimate for risk using the GCV asymptotics decomposition from \Cref{prop:gcv-primal-fails-appendix}:
    \[
        \widetilde{R}(\widetilde{\bbeta}_\nu^\ridge)
        + \frac{C_2 C_1' \widetilde{\Delta}}{K}.
    \]
\end{enumerate}

\subsection{Anisotropic sketching and generalized ridge regression}
\label{sec:generalized-ridge}

Using structural equivalences to anisotropic sketching matrices and generalized ridge regression, one can extend our results to anisotropic sketching and generalized ridge regression.
Specifically, let $\mR \in \RR^{p \times p}$ be an invertible positive semidefinite matrix with bounded operator norm. 
Consider generalized ridge regression with a anisotropic sketching matrices $\tilde{\mS}_k = \mR^{1/2} \mS_k$ for $k \in [K]$:
\begin{equation}
    \label{eq:beta-hat-gridge-appendix}
    \widehat{\vbeta}_\lambda^k
    \defeq \tilde{\mS}_k \widehat{\vbeta}^{\tilde{\mS}_k}_\lambda,
    \quad 
    \text{where}
    \quad
    \widehat{\vbeta}^{\tilde{\mS}_k}_\lambda
    \defeq \argmin_{\vbeta \in \reals^{q}} \tfrac{1}{n} \big\Vert \vy - \mX \tilde{\mS}_k \vbeta \big\Vert_2^2 + \lambda \big\Vert \mG^{1/2} \vbeta \big\Vert_2^2,
\end{equation}
where $\mG \in \RR^{p \times p}$ is a positive definite matrix with bounded operator norm.
Let $\widehat{\vbeta}^\ens_\lambda$ be the ensemble estimator defined analogously as in \eqref{eq:ensemble-estimator} and GCV defined analogously as in \eqref{eq:def-gcv}.
Using Corollary 7.1 of \cite{lejeune2022asymptotics}, all of our results carry in this case in a straightforward manner.

\section{Experimental details}
\label{sec:experimental-details}

All experiments were run in less than 1 hour on a Macbook Air (M1, 2020) and coded in Python using standard scientific computing packages. 
CountSketch \citep{charikar2002frequent} is implemented by generating a sparse matrix corresponding to the hash function, and due to rounding of the size parameters to match theoretical rates, we cannot choose arbitrary sketch sizes and are often restricted to non-standard sequences. 
Instead of the SRHT, which requires an implementation of the fast Walsh--Hadamard transform not readily available and platform-independent in Python (and also suffers statistically from zero-padding issues as described by \citealp{lejeune2022asymptotics}), we use a subsampled randomized discrete cosine transform (SRDCT), which is fast, widely available, and does not suffer the statistical drawbacks. 
All sketches are normalized such that $\mS \mS^\transp \asympequi \mI_p$ and therefore $\tfrac{1}{p}\smallnorm[2]{\mS^\transp \vx}^2 \asympequi 
\tfrac{1}{p}\norm[2]{\vx}^2$. 
We refer the reader to our code repository for implementation details: \href{https://github.com/dlej/sketched-ridge}{https://github.com/dlej/sketched-ridge}.

\subsection{GCV paths in \Cref{fig:gcv_paths}}
\label{sec:experimental-details-fig:gcv_paths}

For this experiment, over 100 trials, we sampled $\mX$ with each row $\vx \sim \normal(\vzero, \mI_p)$ and generated $y = \vx^\transp \vbeta + \xi$ for $\vbeta \sim \normal(\vzero, \tfrac{1}{p} \mI_p)$ and independent noise $\xi \sim \normal(0, 1)$. 
We have $n = 500$ and $p = 600$, and our sketching ensembles have $K = 5$. 
For the left plot, we fix $q = 441$, which is an allowed sketch size for CountSketch. 
For the right plot, we fix $\lambda = 0.2$ and sweep through the choices of $q$ which are allowed by CountSketch, which are $q \in \set{63, 126, 189, 252, 315, 378, 441, 504, 567}$.

\subsection{Real data in \Cref{fig:real-data}}
\label{sec:experimental-details-fig:real-data}

For both real data datasets, we fit our sketched ridge regressors on centered sketched data and responses and then added the mean of the training responses to any outputs. 
For both datasets, we sketched using CountSketch, which is among the most computationally efficient sketches, especially for sparse data as in RCV1. 
We plot risk on a \texttt{symlog} scale of $\lambda$ with linear region from $-10$ to $10$.

For RCV1 \citep{lewis2004rcv1}, we downloaded the data from \texttt{scikit-learn} \citep{sklearn_api}. 
We discarded all labels except for \texttt{GCAT} and \texttt{CCAT} and then discarded all examples that did not uniquely fall into one of these categories. 
These became our binary class labels. 
We then randomly subsampled 20000 training points and 5000 test points, and discarded any features that took value 0 for all train and test points. 
This left 30617 features, and we used $q=515$ for CountSketch. 
We normalized each data vector $\vx$ such that $\norm[2]{\vx} = \sqrt{p}$, preserving sparsity. We then fit ensembles of size $K=5$, reporting error over 10 random trials.

For RNA-Seq \citep{weinstein2013tcga}, we downloaded the data from the UCI Machine Learning repository \citep{uci_repo} at: \href{https://archive.ics.uci.edu/ml/datasets/gene+expression+cancer+RNA-Seq}{https://archive.ics.uci.edu/ml/datasets/gene+expression+cancer+RNA-Seq}. 
We discarded all examples that were labeled neither \texttt{BRCA} nor \texttt{KIRC}, the most common classes, leaving 446 observations, which were split into a training set of 356 and test set of 90. 
We then z-scored each of the 20223 features using the training data statistics. 
We fit ensembles of size $K=5$, reporting error over 10 random trials.

\subsection{Prediction intervals in \Cref{fig:confidence-intervals}}
\label{sec:experimental-details-fig:confidence-intervals}

For this experiment, we use SRDCT sketches. 
For each choice of 
\[
q \in \set{80, 180, 280, 380, 480, 580, 680, 780, 880, 980},
\]
we generated data over 30 trials in similar manner to the experiment in \Cref{fig:gcv_paths}: for $n = 1500$ and $p=1000$ we sampled $\mX$ with each row $\vx \sim \normal(\vzero, \mI_p)$, but we generated $y = g(\vx^\transp \vbeta)$ for $\vbeta \sim \normal(\vzero, \tfrac{1}{p} \mI_p)$ and $g$ the soft-thresholding operator:
\begin{align}
    g(u) = \begin{cases}
        u - 1 & \text{if } u > 1 \\
        0 & \text{if } -1 \leq u \leq 1 \\
        u + 1 & \text{if } u < -1
    \end{cases}.
\end{align}
We compute the 95\% and 99\% prediction intervals by identifying the 2.5\% and 0.1\% tail intervals of the GCV corrected residuals $(y - z) \colon (y, z) \sim \hP_\lambda^\ens$ and evaluate coverage on $1500$ test residuals $y_0 - \vx_0^\transp \vhbeta_\lambda^\ens$. 
We plot 2D histograms of $P_\lambda^\ens$ (empirical using test points) and $\hP_\lambda^\ens$ (using training points) on a logarithmic color scale.

\subsection{Details for \Cref{fig:tuning-applications}}
\label{sec:experimental-details-fig:tuning-applications}

For this experiment, we use SRDCT sketches. For $n = 600$ and $p = 800$, for each trial, we generate Gaussian data with $\mSigma = \mathrm{diag}(\va)$, where $a_i = 2 / (1 + 30 t_i)$, where $t_i$ are $p$ linearly spaced values from $0$ to $1$. 
We generate $y = \vx^\transp \vbeta + \xi$ for $\vbeta_{1:80} \sim \normal(\vzero, \tfrac{1}{80} \mI_{80})$ and $\vbeta_{81:} = \vzero$ and $\xi \sim \normal(0, 4)$.

We evaluate the mapping from $\mu \mapsto \lambda$ for feature sketching by inverting the subordination relation 
\[
\mu = \lambda \Sxf_{\mS \mS^\ctransp}
\bigparen{-\tfrac{1}{p} \tr \bigbracket{\mS^\ctransp \mhSigma \mS \biginv{\mS^\ctransp \mhSigma \mS + \lambda \mI_q}}}
\]
for a single random generation of data and sketch. 
For observation sketching, we do the same but use the relation 
\[
\mu = \lambda \Sxf_{\mS \mS^\ctransp} \bigparen{-\tfrac{1}{n} \tr \bigbracket{\tfrac{1}{n} \mX^\transp \mT \mT^\transp \mX \biginv{\tfrac{1}{n} \mX^\transp \mT \mT^\transp \mX + \lambda \mI_p}}}.
\]
We evaluate the mapping from $\mu \mapsto \alpha$ using the same method as in feature sketching, except we take $q$ as 20 values logarithmically spaced between $1$ and $800$, rounded down to the nearest integer. 
For computing the curves where we vary $\alpha$, we pre-sketch using $q=p$ and then subsample and normalize to obtain the sketched data for each desired $q$.

\subsection{Verification of convergence rate for sketched ensembles}
\label{sec:rate_plot}

We demonstrate that both GCV and risk for sketched ensembles converge at rate $1/K$ to the equivalent ridge for sketched ensembles in \Cref{fig:rate-plot-risk-gcv}. For $n = 140$ and $p = 200$, for a single trial, we generate Gaussian data with $\mSigma = \mI_p$ and $y = \vx^\transp \vbeta + \xi$ for $\vbeta \sim \normal(\vzero, \tfrac{1}{p} \mI_p)$ and $\xi \sim \normal(0, 1)$. We fit $1000$ sketched predictors for $\lambda = 0.1$ for each sketch using $q=156$, and then successively build ensembles of size $K$ by taking the first $K$ predictors. We then subtract the risk of the unsketched predictor at the equivalent $\mu$, determined numerically to be $0.283$ for Gaussian sketching, $0.157$ for orthogonal sketching, $0.281$ for CountSketch, and $0.157$ for SRDCT.

\begin{figure}[t]
    \centering
    \includegraphics[width=0.99\textwidth]{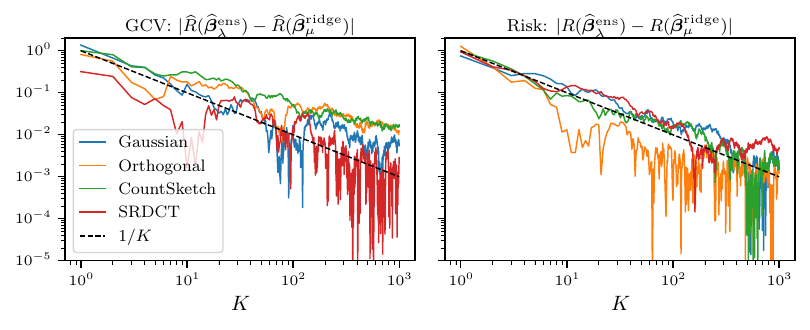}
    \caption{
        \textbf{Both GCV and risk converge at rate $1/K$ to the equivalent ridge for sketched ensembles.}
        See \Cref{sec:rate_plot} for the setup details.
    }
    \label{fig:rate-plot-risk-gcv}
\end{figure}

\section{Complexity comparisons}
\label{sec:complexity_comparisons}

If the sketch size is sufficiently small, the ensemble trick in \eqref{eq:ensemble-trick} can be more computationally efficient than computing GCV directly on the unsketched data or using $k$-fold CV. 
Memory savings are straightforward, as we only need to work with the $n \times q$ (feature sketching) or $m \times p$ (observation sketching) data matrix, so we focus here on computation time complexity. 
For concreteness, we consider dense $\mX$. With additional care this could be extended to sparse $\mX$ and $\mX \mS$, although computation time improvements are harder to obtain with sketching when comparing to iterative solvers on very sparse data.

Letting $\widetilde{\mX} \in \reals^{\tilde{n} \times \tilde{p}}$ denote the unsketched, sketched, or $k$-fold training data depending on context, and similarly $\widetilde{\vy}$ and $\tilde{\lambda}$, the computations are dominated by computing the following two quantities:
\begin{align}
    \widetilde{\vbeta} = \biginv{\tfrac{1}{\tilde{n}} \widetilde{\mX}^\transp \widetilde{\mX} + \tilde{\lambda} \mI_{\tilde{p}}} \tfrac{1}{\tilde{n}} \widetilde{\mX}^\transp \widetilde{\vy}
    \quad \text{and} \quad
    \tfrac{1}{\tilde{n}} \tr[\widetilde{\mL}_\lambda] = \tfrac{1}{\tilde{n}} \tr \bracket{\widetilde{\mX} \biginv{\tfrac{1}{\tilde{n}} \widetilde{\mX}^\transp \widetilde{\mX} + \tilde{\lambda} \mI_{\tilde{p}}}
    \tfrac{1}{\tilde{n}} \widetilde{\mX}^\transp }.
\end{align}
For the ensemble trick, we must know which value of $\mu$ corresponds to the value of $\lambda$ we use, but this can be computed using the subordination relation in \Cref{thm:sketched-pseudoinverse} and $\tfrac{1}{\tilde{n}} \tr[\widetilde{\mL}_\lambda]$.
In high dimensions, this trace is well approximated \citep{hutchinson1989trace} by:
\begin{align}
    \tfrac{1}{\tilde{n}} \tr[\widetilde{\mL}_\lambda] \approx \tfrac{1}{\tilde{n}} \vz^\transp \biginv{\tfrac{1}{\tilde{n}} \widetilde{\mX}^\transp \widetilde{\mX} + \tilde{\lambda} \mI_{\tilde{p}}}
    \tfrac{1}{\tilde{n}} \widetilde{\mX}^\transp \widetilde{\mX} \vz,
\end{align}
where $\vz \in \reals^{\tilde{p} \times \tilde{p}}$ has i.i.d.\ Rademacher ($\pm 1$) entries (or if $\tilde{n} < \tilde{p}$, we could use $\vz \in \reals^{\tilde{n}}$). 
Thus, the computations are dominated by solving linear system with the matrix $\tfrac{1}{\tilde{n}} \widetilde{\mX}^\transp \widetilde{\mX} + \tilde{\lambda} \mI_{\tilde{p}}$ (or $\tfrac{1}{\tilde{n}} \widetilde{\mX}\widetilde{\mX}^\transp  + \tilde{\lambda} \mI_{\tilde{n}}$ if dimensions are preferable). 
We now specialize to a couple of different solvers that could be used to solve these systems, assuming that the cost of sketching is amortized or otherwise negligible compared to solving. 
In what follows, when we use big $\bigO$ notation, we mean that the rates scale with universal constants independent of $\tilde{n}, \tilde{p}, n, p, m, q, \lambda$.

\subsection{Direct solver}

A direct exact solve of this system has cost $\bigO(\tilde{n} \tilde{p} \min\set{\tilde{n}, \tilde{p}})$. 
For unsketched GCV ($\tilde{n} = n$, $\tilde{p} = p$), we must solve two of these systems, one for the parameter estimator and the other for the randomized trace, giving a total cost of $\bigO(2 n p \min\set{n, p})$. 
For unsketched $k$-fold CV, there is only the cost of computing the parameter estimator on the $\tilde{n} = \tfrac{k-1}{k}n$ data points of dimension $\tilde{p} = p$, which is then evaluated on the left-out fold, which is comparatively inexpensive. 
Since this must be done $k$ times, the total cost is $\bigO((k-1) n p \min\set{\tfrac{k-1}{k}n, p})$. 
For the ensemble trick, we must evaluate GCV on two separate parameter estimators ($\tilde{n} = m$, $\tilde{p} = q$), for a total cost of $\bigO(4 m q \min\set{m, q})$.

\subsection{Iterative solver}

An approximate solution is generally acceptable for a risk estimate. Thus, a direct solve is often unnecessary, and an iterative solver such as a Krylov subspace method can offer considerable computational gains. 
In particular, instead of a cost of $\bigO(\tilde{n} \tilde{p} \min\set{\tilde{n}, \tilde{p}})$, the cost becomes $\bigO(\tilde{n} \tilde{p} \sqrt{\tilde{\kappa}_{\tilde{\lambda}}} \log \tfrac{1}{\varepsilon})$ to reach an $\epsilon$-accurate solution, where $\tilde{\kappa}_{\lambda}$ is the condition number of $\widetilde{\mX}^\transp \widetilde{\mX} + \tilde{\lambda} \mI_{\tilde{p}}$.

We can update the computations for the dense solve accordingly, except that for each case (unsketched GCV, unsketched $k$-fold CV, ensemble trick) the matrix will have a different condition number, which we denote as $\kappa_\mu$, $\kappa_{\mu, k}$, and $\kappa_{\lambda, m, q}$, respectively.

\subsection{Comparing complexities}

We list the computational complexities of the various methods for both direct and iterative solvers.

\begin{table}[!ht]
    \label{tab:complexity}
    \centering
    \begin{tabular}{cccc}
    \toprule
    Method (regime) & Unsketched GCV & Unsketched $k$-fold CV & Ensemble trick \\
    \midrule
    Direct solver & $\bigO(2 n p \min\set{n,p})$ & $\bigO((k-1) n p \min\set{\tfrac{k-1}{k}n, p})$ & $\bigO(4m q \min\set{m, q})$
    \\
    \addlinespace[0.5ex]
    \arrayrulecolor{black!25}
    \midrule
    Iterative solver & $\bigO(2 n p \sqrt{\kappa_\mu} \log \tfrac{1}{\epsilon})$ & $\bigO((k-1) n p \sqrt{\kappa_{\mu,k}} \log \tfrac{1}{\epsilon})$ & $\bigO(4m q \sqrt{\kappa_{\mu,m,q}} \log \tfrac{1}{\epsilon})$ \\
    \addlinespace[0.5ex]
    \arrayrulecolor{black}
    \bottomrule
    \end{tabular}
    \caption{Complexity comparison for risk estimation in ridge regression.}
\end{table}

For the direct solver, the ensemble trick is always beneficial over unsketched GCV as long as $q < p/2$ (for feature sketching) or $m < n/2$ (for observation sketching), regardless of the relative sizes of $n$ and $p$. 
If we also know that $p < n$, then the ensemble trick with feature sketching is beneficial as long as $q < p / \sqrt{2}$, and an analogous result holds for observation sketching if $m < n / \sqrt{2}$. 
If one were to sketch both observations and features by a factor of $\alpha$, it suffices to use $\alpha < 2^{-1/3} \approx 0.78$. 
Meanwhile, $k$-fold CV is never competitive for $k=5$ or $10$.

For the iterative solver, in order to compare the complexity, we need to know how the condition numbers change with sketching, which is not obvious. 
We can gain some insight, however, by considering very small sketches. 
As we observed in \Cref{sec:freeness-real-datasets}, all sketch types seem to have similar subordination relations for small sketches, so we can specialize to Gaussian sketches for simplicity. 
As the sketch size decreases, all eigenvalues of $\mS^\transp \mhSigma \mS$ concentrate around a single value, in the extreme limit of $q=1$ converging to $\vs^\transp \mhSigma \vs \approx \tr[\mhSigma]$. 
This means that the condition number of $\mS^\transp \mhSigma \mS + \lambda \mI_q$ tends toward 1 as sketches get smaller, meaning that sketching better conditions the system such that $\kappa_{\mu, m, q} \leq \kappa_\mu$. 
We then again have the same conclusion as in the direct solver case, that as long as $m < n/2$ or $q < p/2$, then using the ensemble trick is beneficial over unsketched GCV. 
And similarly, $k$-fold CV is not competitive.

\end{document}